\documentclass[reqno]{amsart}
\usepackage{CJK}
\usepackage{hyperref}
\usepackage{cite}
\usepackage{amsmath}
\usepackage{mathrsfs}
\def\dive{\operatorname{div}}

\textheight 20.0cm \textwidth 14.0cm
\numberwithin{equation}{section}

\newtheorem{theorem}{Theorem}[section]
\newtheorem{lemma}[theorem]{Lemma}
\newtheorem{definition}[theorem]{Definition}

\newtheorem{proposition}[theorem]{Proposition}
\newtheorem{remark}[theorem]{Remark}

\allowdisplaybreaks

\makeatletter
\@namedef{subjclassname@2020}{%
	\textup{2020} Mathematics Subject Classification}
\makeatother

\begin{document}
	
\title[\hfil Equivalence of weak and viscosity solutions\dots] {Equivalence of weak and viscosity solutions for the nonhomogeneous double phase equation}

\author[Y. Fang, V.D. R\u{a}dulescu, C. Zhang]{Yuzhou Fang, Vicen\c{t}iu D. R\u{a}dulescu$^{*, **}$ and Chao Zhang$^{***}$}

\thanks{$^*$Corresponding author: Vicen\c{t}iu D. R\u{a}dulescu ({\tt radulescu@inf.ucv.ro})}
\thanks{$^{**}$ORCID: 0000-0003-4615-5537 (Vicen\c{t}iu D. R\u{a}dulescu)}
\thanks{$^{***}$ORCID: 0000-0003-2702-2050 (Chao Zhang)}

\address{Yuzhou Fang \hfill\break School of Mathematics, Harbin Institute of Technology, Harbin 150001, China}
\email{18b912036@hit.edu.cn}

\address{Vicen\c{t}iu D. R\u{a}dulescu   \hfill\break   Faculty of Applied Mathematics,
	AGH University of Science and Technology,  Krak\'{o}w 30-059, Poland \&   Department of Mathematics, University of Craiova, Craiova 200585, Romania}
\email{radulescu@inf.ucv.ro}

\address{Chao Zhang  \hfill\break School of Mathematics and Institute for Advanced Study in Mathematics, Harbin Institute of Technology, Harbin 150001, China}
\email{czhangmath@hit.edu.cn}

\subjclass[2020]{35J92, 35D40, 35D30, 35B45.}
\keywords{Nonhomogeneous double-phase equation; Viscosity solution; Weak solution; Equivalence; Lipschitz continuity}

\maketitle

\begin{abstract}
We establish the equivalence between weak and viscosity solutions to the nonhomogeneous double phase equation with lower-order term
$$
-\dive(|Du|^{p-2}Du+a(x)|Du|^{q-2}Du)=f(x,u,Du),\quad 1<p\le q<\infty, a(x)\ge0.
$$
We find some appropriate hypotheses on the coefficient $a(x)$,  the exponents $p, q$ and the nonlinear term $f$ to show that the viscosity solutions with {\em a priori} Lipschitz continuity are weak solutions of such equation by virtue of the $\inf$($\sup$)-convolution techniques. The reverse implication can be concluded through comparison principles. Moreover, we verify that the bounded viscosity solutions are exactly Lipschitz continuous, which is also of independent interest.
\end{abstract}

\section{Introduction}
\label{sec-1}

Let $\Omega$ be a bounded domain in $\mathbb R^n$ ($n\ge 2$). In this work we aim to examine the inner relationship between weak and viscosity solutions to the following nonhomogeneous double phase equation
\begin{equation}
\label{main}
-\dive (|Du|^{p-2}Du+a(x)|Du|^{q-2}Du)=f(x,u,Du) \quad \text{in } \Omega,
\end{equation}
where $1<p\leq q<\infty$, $a(x)\geq0$ and $f(x,\tau,\xi):\Omega\times\mathbb{R}\times\mathbb{R}^n\rightarrow\mathbb{R}$ is a continuous function. The double phase problems, stemming from the models of strongly anisotropic materials, were originally investigated by Zhikov \cite{Zhi86,Zhi93}  and Marcellini \cite{Mar91} in the context of homogenization and Lavrentiev phenomenon.

Over the last years, problems  of the type considered in \eqref{main}  have attracted intensive attention from the variational point of view, whose celebrated prototype is given by the following unbalanced energy functional
$$
W^{1,1}(\Omega)\ni u\mapsto\mathcal{P}(u,\Omega):=\int_\Omega (|Du|^p+a(x)|Du|^q)\,dx.
$$
The significant characteristics of this functional are that its ellipticity and growth rate will change drastically according to the modulating coefficient $a(\cdot)$ equal to 0 or not. The regularity of minimizers is determined via a delicate interaction between the growth conditions and the pointwise behaviour of $a(\cdot)$. For instance, under the hypotheses that
\begin{equation}
\label{2-0}
0\le a(\cdot)\in C^{0,\alpha}(\Omega),  \alpha\in (0,1] \quad\text{and}\quad \frac{q}{p}\le1+\frac{\alpha}{n},
\end{equation}
Colombo, Mingione {\it et. al.} \cite{CM15,BCM18,BCM15} established the gradient H\"{o}lder continuity and Harnack inequality for the minimizers of $\mathcal{P}$. A key feature of this problem is that the minimizers could be even discontinuous when condition \eqref{2-0} is violated, by means of the counterexamples presented in \cite{ELM04,FMM04}.  For the double phase equation
$$
-\dive (|Du|^{p-2}Du+a(x)|Du|^{q-2}Du)=-\dive (|F|^{p-2}F+a(x)|F|^{q-2}F) \quad \text{in } \Omega,
$$
the Calder\'{o}n-Zygmund estimates of weak solutions were derived in \cite{CM16,DeFM} under the assumptions in \eqref{2-0} (see also \cite{BO17, BBO20}). More recently, De Filippis and Mingione  \cite{DeFM21} considered  a very large class of vector-valued nonautonomous variational problems involving integral functionals of the double phase type, where the authors provided a comprehensive treatment of Lipschitz regularity of solutions under sharp conditions. Despite their relatively short history, double phase problems have achieved very fruitful results with several connections to other aspects, such as the existence and multiplicity of solutions \cite{PRR20}, the nonlocal version \cite{DeFP19, FZ210}, the properties of eigenvalues and eigenfunctions \cite{CS16,PPR22}, and the removability and obstacle problems \cite{CDeF20,KL21}. We also refer to \cite{BDS20, DeFM22, FRZZ22, HJ22, L22, Mar21, MR21} and references therein for more results.

The topic on equivalence of different solutions starts from the works of Lions \cite{Lions}  and Ishii \cite{Ish95} on linear equations. For what concerns the quasilinear case, Juutinen, Lindqvist and Manfredi \cite{JLM01} proved that the weak solutions coincide with the viscosity solutions to $p$-Laplace equation and its parabolic version based on the uniqueness machinery of solutions; see \cite{JLP10} for $p(x)$-Laplace type equation. The equivalence of solutions was generalized to the fractional $p$-Laplace equation in \cite{KKP19} by following the analogous ideas. Julin and Juutinen \cite{JJ12} gave a more immediate proof for the equivalence of viscosity and weak solutions to the $p$-Laplace equation without relying on the comparison principle of viscosity solutions. They introduced a technical regularization process through infimal convolution, which was applied to various equations incorporating the normalized $p(x)$-Laplace equation \cite{Sil18}, the nonhomogeneous nonlocal $p$-Laplace equation \cite{BM21} and the normalized $p$-Possion equation \cite{APR17}. More related results can be found in \cite{MO19,Sil21,PV,MO22}.

From the results mentioned above, we can see that the research achievements for the double phase problems mainly focus on the weak solutions from the variational perspective and there are few results concerning the relationship between viscosity and weak solutions for the general nonuniformly elliptic equations. In particular, De Filippis and Palatucci \cite{DeFP19}  showed that the bounded viscosity solutions  of the nonlocal counterpart to \eqref{main} are locally H\"{o}lder continuous.  For the homogeneous case of \eqref{main}, the first and third authors \cite{FZ2020} demonstrated the equivalence between weak and viscosity solutions through introducing $\mathcal{A}_{H(\cdot)}$-harmonic functions that serve as a bridge.  Motivated by the previous works \cite{DeFP19,FZ2020}, our intention in the present paper is to prove the equivalence of weak and viscosity solutions for the nonhomogeneous problem \eqref{main}. Due to the presence of lower-order term, we cannot introduce $\mathcal{A}_{H(\cdot)}$-harmonic functions any more and it is hard to use the full uniqueness machinery of viscosity solutions. To this end, we revisit the inf(sup)-convolution approximation, developed by \cite{JJ12}, to verify directly that weak solutions are equivalent to viscosity solutions under some proper preconditions.

We are now in a position to state the main contributions of this manuscript. The first one is as follows:

\begin{theorem}
\label{thm1}
Let $0<a(x)\in C^1(\Omega)$ and $\frac{q}{p}\le 1+\frac{1}{n}$ be in force. Suppose that $f(x,\tau,\xi)$ is uniformly continuous in $\Omega\times\mathbb{R}\times\mathbb{R}^n$, decreasing in $\tau$, Lipschitz continuous with respect to $\xi$ and fulfill the following growth condition
\begin{equation}
\label{0-2}
|f(x,\tau,\xi)|\le \gamma(|\tau|)(|\xi|^{p-1}+a(x)|\xi|^{q-1})+\Phi(x),
\end{equation}
where $\gamma(\cdot)\ge0$ is continuous and $\Phi\in L^\infty_{\rm loc}(\Omega)$. Let $u$ be a viscosity supersolution with local Lipschitz continuity to \eqref{main} in $\Omega$. Then $u$ is a weak supersolution as well.
\end{theorem}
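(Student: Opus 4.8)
The plan is to adapt the inf-convolution method of Julin--Juutinen \cite{JJ12}, as used for the homogeneous double phase equation in \cite{FZ2020}, to the present nonautonomous problem. Since being a weak supersolution is a local property, fix open sets $\Omega'\subset\subset\Omega''\subset\subset\Omega$, set $L:=\operatorname{Lip}(u;\overline{\Omega''})$, and write $\mathcal{A}(x,\xi):=|\xi|^{p-2}\xi+a(x)|\xi|^{q-2}\xi$, so that \eqref{main} reads $-\dive\mathcal{A}(x,Du)=f(x,u,Du)$. The first step is to introduce the inf-convolution
$$
u_\varepsilon(x):=\inf_{y\in\Omega''}\Bigl(u(y)+\frac{|x-y|^{\beta}}{\beta\,\varepsilon^{\beta-1}}\Bigr),\qquad x\in\Omega',
$$
for a fixed $\beta\ge2$, and to record its standard properties: $u_\varepsilon\le u$, $u_\varepsilon$ increases to $u$ locally uniformly as $\varepsilon\to0$, and $u_\varepsilon$ is semiconcave, hence locally Lipschitz and, by Alexandrov's theorem, twice differentiable a.e.\ in $\Omega'$. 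The hypothesis that $u$ be \emph{a priori} locally Lipschitz is used in an essential way here: it forces the minimizing point $y_\varepsilon(x)$ in $u_\varepsilon(x)$ to satisfy $|x-y_\varepsilon(x)|\le C\varepsilon=:\rho_\varepsilon\to0$ uniformly for $x\in\Omega'$, and it gives the \emph{uniform} gradient bound $\|Du_\varepsilon\|_{L^\infty(\Omega')}\le L$; without it $\|Du_\varepsilon\|_{L^\infty}$ would blow up as $\varepsilon\to0$ and the limit procedure below would collapse.

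The second step is to show that $u_\varepsilon$ is an \emph{approximate} viscosity supersolution of \eqref{main} in $\Omega'$. If $\phi\in C^2$ touches $u_\varepsilon$ from below at $x_0$, then $\phi(\,\cdot\,+x_0-y_0)$ minus a constant touches $u$ from below at $y_0:=y_\varepsilon(x_0)$, with the same gradient and Hessian at the contact point; writing the viscosity inequality for $u$ there and translating back, one controls the mismatch between $y_0$ and $x_0$ using the $x$-independence of the $p$-phase part, the uniform continuity of $a$ and $Da$ (here $a\in C^1(\Omega)$ is crucial) for the $q$-phase part, the uniform continuity of $f$ combined with its monotonicity in $\tau$ and with $u_\varepsilon(x_0)\ge u(y_0)$ for the right-hand side, and the uniform bound $|D\phi(x_0)|=|Du_\varepsilon(x_0)|\le L$ to keep these increments uniform in $\varepsilon$. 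Because a semiconcave function is, at a.e.\ point, touched from below by its second-order Taylor polynomial (after subtracting $\eta|x-x_0|^2$ and letting $\eta\downarrow0$), this produces the pointwise inequality: for a.e.\ $x\in\Omega'$,
$$
-\bigl[\dive\mathcal{A}(\,\cdot\,,Du_\varepsilon)\bigr]_{\mathrm{ac}}(x)\ \ge\ f\bigl(x,u_\varepsilon(x),Du_\varepsilon(x)\bigr)-E_\varepsilon(x),
$$
where $[\,\cdot\,]_{\mathrm{ac}}$ is formed with the Alexandrov Hessian of $u_\varepsilon$, and $E_\varepsilon\ge0$ collects the moduli-of-continuity contributions.

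The third step upgrades this to a weak inequality for $u_\varepsilon$ and then passes to the limit. Since $u_\varepsilon$ is semiconcave, $Du_\varepsilon\in BV_{\mathrm{loc}}\cap L^\infty_{\mathrm{loc}}$ and, using $a\in C^1$, the distributional divergence $\dive\mathcal{A}(\,\cdot\,,Du_\varepsilon)$ is a Radon measure whose absolutely continuous density is the left-hand side above and whose singular part is $\le0$, because $\partial_\xi\mathcal{A}(x,\cdot)$ is positive semidefinite (the integrand $t\mapsto\tfrac1p t^p+\tfrac{a(x)}{q}t^q$ is convex) while the singular part of $D^2u_\varepsilon$ is negative semidefinite (semiconcavity). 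Hence, for $0\le\varphi\in C_c^\infty(\Omega')$, integrating by parts gives
$$
\int_{\Omega'}\mathcal{A}(x,Du_\varepsilon)\cdot D\varphi\,dx\ \ge\ \int_{\Omega'}\varphi\,f(x,u_\varepsilon,Du_\varepsilon)\,dx-\int_{\Omega'}\varphi\,E_\varepsilon\,dx .
$$
Then I would let $\varepsilon\to0$: using $u_\varepsilon\to u$ locally uniformly, $u_\varepsilon\le u$ and $\|Du_\varepsilon\|_{L^\infty(\Omega')}\le L$, one has $Du_\varepsilon\rightharpoonup Du$ weakly-$*$, and testing the last inequality with the admissible nonnegative function $(u-u_\varepsilon)\zeta$ ($\zeta\in C_c^\infty(\Omega')$, $\zeta\ge0$) together with the monotonicity of $\xi\mapsto\mathcal{A}(x,\xi)$ upgrades this to $Du_\varepsilon\to Du$ a.e.\ and in $L^p_{\mathrm{loc}}$, and $a^{1/q}Du_\varepsilon\to a^{1/q}Du$ in $L^q_{\mathrm{loc}}$ (here the balance $q/p\le1+\tfrac1n$, the positivity of $a$ and the growth bound \eqref{0-2} are used to run the energy comparison in the Musielak--Orlicz--Sobolev scale). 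The $L^\infty$ bounds then give $\mathcal{A}(x,Du_\varepsilon)\to\mathcal{A}(x,Du)$ in $L^1_{\mathrm{loc}}$, and \eqref{0-2} with $\gamma(\|u\|_{L^\infty(\Omega')})$, $\|Du_\varepsilon\|_{L^\infty(\Omega')}\le L$ and $\Phi\in L^\infty_{\mathrm{loc}}$ gives $f(x,u_\varepsilon,Du_\varepsilon)\to f(x,u,Du)$ in $L^1_{\mathrm{loc}}$ by dominated convergence; combined with $\int\varphi E_\varepsilon\to0$, passing to the limit yields $\int_{\Omega'}\mathcal{A}(x,Du)\cdot D\varphi\,dx\ge\int_{\Omega'}\varphi f(x,u,Du)\,dx$ for every such $\varphi$, so $u$ is a weak supersolution in $\Omega'$, and thus in $\Omega$.

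The main obstacle is expected to be the claim $\int_{\Omega'}\varphi\,E_\varepsilon\,dx\to0$. The dangerous term comes from the $q$-phase coefficient: it is of the form $\bigl(a(x)-a(y_\varepsilon(x))\bigr)$ times a contraction of $\partial_\xi(|\xi|^{q-2}\xi)$ with the Alexandrov Hessian of $u_\varepsilon$, and although semiconcavity plus the uniform Lipschitz bound give $\int_{\Omega'}|D^2_{\mathrm{ac}}u_\varepsilon|\,dx<\infty$, this bound degenerates as $\varepsilon\to0$, so $\omega_a(\rho_\varepsilon)$ cannot be naively balanced against it. Overcoming this requires exploiting more than mere Hölder continuity of $a$, namely its $C^1$-regularity together with the geometry of the minimizer (the vector $x-y_\varepsilon(x)$ is parallel to $Du_\varepsilon(x)$, with length $\sim\varepsilon$) and a suitable choice of $\beta$ --- or, alternatively, keeping the coefficient perturbation inside the divergence as a modified coefficient $a_\varepsilon\to a$ uniformly and passing to the limit only afterwards. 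A secondary, more routine, difficulty is the strong convergence $Du_\varepsilon\to Du$ in the $p$- and weighted $q$-norms simultaneously, which is exactly where the structural balance $q/p\le1+\tfrac1n$ enters.
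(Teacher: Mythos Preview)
Your overall strategy matches the paper's: inf-convolution $u_\varepsilon$, an approximate viscosity inequality for $u_\varepsilon$, upgrade to a weak inequality, then pass to the limit. The identification of the main obstacle---the $q$-phase coefficient mismatch producing a term of the type $(a(x)-a(y_\varepsilon(x)))\cdot|\eta|^{q-2}\mathrm{tr}(A_q(\eta)D^2u_\varepsilon)$---is also correct, and you are right that $\omega_a(\rho_\varepsilon)\int|D^2_{\mathrm{ac}}u_\varepsilon|\,dx$ cannot be controlled uniformly in $\varepsilon$. However, neither of your proposed fixes (exploiting the direction of $x-y_\varepsilon(x)$, or freezing a modified coefficient $a_\varepsilon=a\circ y_\varepsilon$ inside the divergence) resolves the issue: the first does not kill the Hessian factor, and the second runs into the fact that integrating by parts produces $D(a\circ y_\varepsilon)$ rather than $(Da)\circ y_\varepsilon$, so the first-order term $F_3$ does not match.

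The paper's resolution (Lemma~\ref{lem3-1}) is different from your simple translation and is the missing ingredient. Instead of transporting the \emph{same} jet $(\eta,X)$ from $x_0$ to $y_0$, one applies the Crandall--Ishii theorem of sums to the doubled function $(x,y)\mapsto \varphi(x)-u(y)-\tfrac{|x-y|^s}{s\varepsilon^{s-1}}$, obtaining matrices $Y\le Z\le -X$ together with the two-sided matrix inequality~\eqref{3-2}. The point is that the $F_2$-mismatch now reads $F_2(x_0,\eta,Z)-F_2(y_0,\eta,Y)=\mathrm{tr}\bigl(M^{1/2}(x_0,\eta)ZM^{1/2}(x_0,\eta)-M^{1/2}(y_0,\eta)YM^{1/2}(y_0,\eta)\bigr)$, and the matrix inequality bounds this by a constant times $\varepsilon^{1-s}|x_0-y_0|^{s-2}\|M^{1/2}(x_0,\eta)-M^{1/2}(y_0,\eta)\|^2$, which involves only $|a(x_0)-a(y_0)|$, $|\eta|$ (bounded by the Lipschitz constant of $u$) and the \emph{strict} positivity of $a$ (to control the square-root difference from below), \emph{not} the size of any Hessian. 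The outcome is a \emph{uniform} error $E(\varepsilon)\to0$, independent of $x$, so the integral $E(\varepsilon)\int\varphi$ trivially vanishes; this is precisely why the hypothesis $a>0$ appears in Theorem~\ref{thm1}.

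A secondary difference: to pass from the pointwise inequality to the weak one, the paper does not use your BV/singular-part argument but instead mollifies both $u_\varepsilon$ and $a$ (Lemma~\ref{lem3-2}), regularising by $(\,|Du_{\varepsilon,j}|^2+\delta)^{(p-2)/2}$ in the singular range $1<p<2$, and then sends $j\to\infty$ and $\delta\to0$ via Fatou and dominated convergence. Your approach is viable in principle for the $p$-Laplacian, but for the double phase operator with $x$-dependent coefficient the chain rule for $\dive\mathcal{A}(x,Du_\varepsilon)$ as a measure is delicate; the mollification route sidesteps this. Finally, the role of $q/p\le 1+\tfrac1n$ in the paper is primarily to guarantee that $C_c^\infty$ is dense in $W^{1,H(\cdot)}_0$, so that smooth test functions suffice; the strong convergence $Du_\varepsilon\to Du$ in $L^{H(\cdot)}$ (Lemmas~\ref{lem3-4}--\ref{lem3-5}) is obtained by testing the approximate weak inequality with $(u-u_\varepsilon)\psi$ exactly as you propose, but using the Musielak--Orlicz framework rather than the uniform Lipschitz bound directly.
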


We would like to mention that the double phase operator, compared to the usual $p$-Laplace operator, lacks translation invariance property and exhibits two diverse growth terms owing to the presence of $a(x)$.  It will lead to an additional error term $E(\varepsilon)$ in the key Lemma \ref{lem3-1} and demand an {\em a priori} assumption that  the viscosity solution $u$ is locally Lipschictz continuous.

The second result is about the local Lipschitz continuity of viscosity solutions, which is also of independent interest.

\begin{theorem}
\label{thm1-1}
Let $u$ be a bounded viscosity solution to \eqref{main} in $\Omega$. Under the assumptions that $0\le a(x)\in C^1(\Omega)$, $p\le q\le p+\frac{1}{2}$ and \eqref{0-2}, for any $\Omega'\subset\subset\Omega$, there is a constant $C$ that depends on $n,p,q,\gamma_\infty,\Omega',\Omega,\|a\|_{C^1(\Omega)}$, $\|u\|_{L^\infty(\Omega)}$  and $\|\Phi\|_{L^\infty(\Omega)}$, such that
$$
|u(x)-u(y)|\le C|x-y|
$$
for all $x,y\in \Omega'$. Here $\gamma_\infty:=\max_{t\in[0,\|u\|_{L^\infty(\Omega)}]} \gamma(t)$.
\end{theorem}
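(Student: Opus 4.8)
The statement is an interior regularity result for viscosity solutions, and the plan is to prove it by the Ishii--Lions method of doubling the variables together with the Jensen--Ishii theorem on sums. Fix $\Omega'\subset\subset\Omega$; it suffices, by a standard covering and chaining argument (using the boundedness of $u$ to handle pairs with $|x-y|$ not small), to establish $u(x)-u(y)\le L|x-y|$ for $x,y$ in a fixed ball $B_{2r}(z_0)\subset\subset\Omega$ of small radius $r$ depending on the data, and by the symmetry $x\leftrightarrow y$ it is enough to bound $u(x)-u(y)$ from above. Introduce the auxiliary function
\[
\Psi(x,y):=u(x)-u(y)-L\,\phi(|x-y|)-\Lambda\big(|x-z_0|^2+|y-z_0|^2\big),\qquad \phi(t):=t-\omega_0t^{3/2},
\]
where $\omega_0$ is chosen so that $\phi$ is increasing and strictly concave on $[0,2r]$ with $\phi(t)\ge t/2$ and $\phi'(t)\ge\tfrac12$ there, and $\Lambda$ is a localization parameter with $\Lambda>\|u\|_{L^\infty}/(2r^2)$. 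The goal is to show $\sup\Psi\le0$ on $\overline{B_{2r}}\times\overline{B_{2r}}$ for a suitable choice of $L,\Lambda,\omega_0,r$ depending only on the quantities in the statement; this yields $u(x)-u(y)\le L\phi(|x-y|)+2\Lambda r|x-y|$, hence the claimed bound with the constant absorbed into $C$.

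Suppose, for contradiction, that $\sup\Psi>0$. The choice $\Lambda>\|u\|_{L^\infty}/(2r^2)$ forces the supremum (attained since $u$ is continuous) to be taken at an interior point $(\bar x,\bar y)$, and necessarily $\bar x\neq\bar y$ and $u(\bar x)>u(\bar y)$; comparing with $\Psi(z_0,z_0)=0$ gives $\Lambda\big(|\bar x-z_0|^2+|\bar y-z_0|^2\big)\le 2\|u\|_{L^\infty}$ and $L\phi(\bar d)\le 2\|u\|_{L^\infty}$, so that $\bar d:=|\bar x-\bar y|\le 4\|u\|_{L^\infty}/L$ is small once $L$ is large. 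The Jensen--Ishii theorem on sums, applied to $\Psi$ at $(\bar x,\bar y)$, then yields, for small $\mu>0$, symmetric matrices $X,Y$ with $\big(D_x\varphi,X\big)\in\overline{J}^{2,+}u(\bar x)$ and $\big(-D_y\varphi,Y\big)\in\overline{J}^{2,-}u(\bar y)$, where $\varphi$ denotes the test part of $\Psi$, together with the usual matrix inequality. Writing $\hat e:=(\bar x-\bar y)/\bar d$, the two gradients both equal $L\phi'(\bar d)\hat e$ up to $O(\Lambda r)$-terms and are thus nonzero for $L$ large, so the equation may be applied at $\bar x$ and $\bar y$ in its classical non-degenerate form. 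Testing the matrix inequality against $(\xi,\xi)$ and against $(\hat e,-\hat e)$ gives
\[
X-Y\le C\Lambda\,I,\qquad \langle(X-Y)\hat e,\hat e\rangle\le 4L\phi''(\bar d)+C\Lambda=-\,c\,\omega_0\,L\,\bar d^{-1/2}+C\Lambda,
\]
and moreover $\|X\|,\|Y\|\le CL/\bar d$.

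Since $u$ is at the same time a viscosity sub- and a viscosity supersolution, insert these jets into the equation and subtract the two inequalities. As $u(\bar x)>u(\bar y)$ and $f$ is decreasing in $\tau$, the resulting right-hand side is at most $|f(\bar x,u(\bar y),D_x\varphi)|+|f(\bar y,u(\bar y),-D_y\varphi)|$, which by \eqref{0-2} together with $\gamma\le\gamma_\infty$ (here $u$ bounded is used) does not exceed $C\gamma_\infty\big(1+\|a\|_{L^\infty}\big)L^{q-1}+C\|\Phi\|_{L^\infty}$. On the left-hand side, expand the double phase operator in non-divergence form; the difference splits into an ellipticity part, built from the matrices $A_p(\zeta):=|\zeta|^{p-2}I+(p-2)|\zeta|^{p-4}\zeta\otimes\zeta$ and $a(\bar x)A_q(\zeta)$ evaluated at $\zeta=D_x\varphi$ and contracted with $X-Y$, and error terms produced by $a(\bar x)-a(\bar y)$, by $Da$, and by the mismatch between $D_x\varphi$ and $-D_y\varphi$. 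The matrix inequalities above show that the ellipticity part is bounded below by $c\,\omega_0\,L^{p-1}\bar d^{-1/2}$, up to corrections controlled by $\Lambda$; this is the decisive use of the concavity of $\phi$. The error terms are estimated with the aid of $|\bar x-z_0|^2+|\bar y-z_0|^2\le C\|u\|_{L^\infty}/\Lambda$ and of the $C^1$-norm of $a$.

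The heart of the matter — and the genuine difference from the homogeneous, translation-invariant $p$-Laplacian — is to dominate the right-hand side and all the error terms by the single gain $c\,\omega_0\,L^{p-1}\bar d^{-1/2}$, in the presence of the non-autonomous coefficient $a(\cdot)$ and of the faster $q$-growth. The point is to exploit $\bar d\le 4\|u\|_{L^\infty}/L$, which makes the gain at least $c\,\omega_0\,L^{p-1}\big(L/(4\|u\|_{L^\infty})\big)^{1/2}$, i.e.\ of order $L^{p-1/2}$, and then to invoke the hypothesis $q\le p+\tfrac12$ — equivalently $q-1\le p-\tfrac12$ — which gives $L^{q-1}\le L^{p-1/2}$. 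Choosing first $r$ and $\omega_0$ small (in terms of the data), then $L$ large and $\Lambda$ in the admissible range $\big(\|u\|_{L^\infty}/(2r^2),\,cL/r\big)$, one makes the gain strictly exceed the sum of the right-hand side and of all the error terms, which contradicts $\sup\Psi>0$. Hence $\sup\Psi\le0$, which gives the local inequality; the covering and chaining argument then yields the stated estimate, with $C$ depending only on $n,p,q,\gamma_\infty,\Omega',\Omega,\|a\|_{C^1(\Omega)},\|u\|_{L^\infty(\Omega)}$ and $\|\Phi\|_{L^\infty(\Omega)}$. I expect this balancing of the gain against the $q$-growth and the $a(\cdot)$-dependent errors to be the main difficulty; by contrast, the expansion of the operator, the treatment of the lower-order term $f$, and the consequences of the theorem on sums are routine.
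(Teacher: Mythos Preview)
Your one–step Ishii--Lions argument has a genuine gap: the error terms coming from the mismatch $\eta_1\neq\eta_2$ of the two gradients cannot be absorbed by your gain $c\,\omega_0 L^{p-1}\bar d^{-1/2}$. Concretely, after subtracting the two viscosity inequalities you must control terms such as
\[
|\eta_2|^{s-2}\,\mathrm{tr}\big((A_s(\eta_1)-A_s(\eta_2))Y\big)
\quad\text{and}\quad
\big(|\eta_1|^{s-2}-|\eta_2|^{s-2}\big)\,\mathrm{tr}(A_s(\eta_1)Y),\qquad s\in\{p,q\}.
\]
With only the localisation bound $|\bar x-z_0|^2+|\bar y-z_0|^2\le C\|u\|_{L^\infty}/\Lambda$ you get $|\eta_1-\eta_2|\le C(\Lambda)$, a constant independent of $\bar d$, so $\|A_s(\eta_1)-A_s(\eta_2)\|\le C/L$; since $\|Y\|\le CL/\bar d$, each of these errors is of order $L^{s-2}/\bar d$. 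The ratio of such an error to the corresponding gain $L^{s-1}\bar d^{-1/2}$ is $L^{-1}\bar d^{-1/2}$, which is \emph{unbounded} as $\bar d\to 0$ (and nothing prevents $\bar d$ from being much smaller than $1/L$). No admissible choice of $L$, $\Lambda$, $r$, or of the profile exponent in $\phi(t)=t-\omega_0 t^{\nu}$ with $\nu>1$ cures this: matching the $\bar d$–powers would force $\nu\le 1$.

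The paper overcomes exactly this obstruction by a \emph{two–step} scheme. First (Lemma~\ref{lem5-1}) it proves $\beta$–H\"older continuity for any $\beta\in(0,1)$ via Ishii--Lions with $\phi(r)=r^\beta$; in that setting $\|Y\|\sim L_1|\bar x-\bar y|^{\beta-2}$ and the error/gain balance closes. Second (Lemma~\ref{lem5-2}), the H\"older bound is fed back into the localisation estimate to obtain the crucial improvement $|\eta_1-\eta_2|\le C|\bar x-\bar y|^{\beta/2}$ (this is \eqref{5-8}), so the mismatch errors now carry the factor $\bar d^{\beta/2-1}$; choosing $\nu=1+\beta/2$ makes the gain carry the same factor, and the ratio becomes $1/L$. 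The restriction $q\le p+\tfrac12$ then emerges from dominating the $L^{q-1}$–sized terms (from $f$, $Da$, and $a(\bar x)-a(\bar y)$) by the $p$–gain, using $\bar d\le C/L$ and $\beta<1$; your heuristic for this last point is correct, but it only handles the $\bar d$–independent errors, not the ones above.
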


In order to verify Theorem \ref{thm1-1}, we need to utilize the Ishii-Lions methods in \cite{IL90} twice, and adjust carefully the distance $q-p$ in order to get a contradiction. Combining the above two theorems yields that the bounded viscosity solutions are weak solutions with some explicit conditions.

For showing weak solutions are viscosity solutions, we consider a class of functions satisfying comparison principle.

\begin{definition}
\label{0-1}
Suppose that $u$ is a weak supersolution to \eqref{main} in $\Omega'\subset\Omega$. If for any weak subsolution $v$ of \eqref{main} such that $v\le u$ a.e. in $\partial\Omega'$ there holds that $v\le u$ a.e. in $\Omega'$, then we say that $(u,f)$ fulfills comparison principle property (CCP) in $\Omega'$.
\end{definition}

Finally, the weak solutions being viscosity solutions could be obtained under the (CCP) condition by contradiction argument.

\begin{theorem}
\label{thm2}
Let $u$ be a lower semicontinuous weak supersolution to \eqref{main} in $\Omega$. Assume that $f(x,\tau,\xi)$ is uniformly continuous in $\Omega\times\mathbb{R}\times\mathbb{R}^n$. If (CPP) holds true, then $u$ is also a viscosity supersolution to \eqref{main}.
\end{theorem}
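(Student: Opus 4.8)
The plan is to argue by contradiction, converting the failure of the viscosity supersolution inequality at a single point into a violation of (CPP) on a small ball. Suppose $u$ is not a viscosity supersolution of \eqref{main}. Then there are a point $x_0\in\Omega$ and a function $\varphi\in C^2$ near $x_0$ such that $u-\varphi$ has a local minimum at $x_0$, and, after subtracting a constant so that $\varphi(x_0)=u(x_0)$,
$$
-\dive\big(|D\varphi(x_0)|^{p-2}D\varphi(x_0)+a(x_0)|D\varphi(x_0)|^{q-2}D\varphi(x_0)\big)<f(x_0,u(x_0),D\varphi(x_0)).
$$
The degenerate situation $D\varphi(x_0)=0$ is handled through the relaxed definition exactly as in the homogeneous case, so we may assume $D\varphi(x_0)\neq0$, in which case the operator acts classically on $\varphi$ in a neighbourhood of $x_0$. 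Replacing $\varphi(x)$ by $\varphi(x)-|x-x_0|^4$ changes neither $D\varphi(x_0)$ nor $D^2\varphi(x_0)$, hence preserves the strict inequality above, while in addition forcing $u(x)>\varphi(x)$ for $0<|x-x_0|<R$.

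Since $a\in C^1(\Omega)$, $\varphi\in C^2$, and $x\mapsto f(x,\varphi(x),D\varphi(x))$ is continuous, the strict inequality propagates: we may choose $r\in(0,R)$ with $\overline{B_r(x_0)}\subset\Omega$, $u>\varphi$ on $\overline{B_r(x_0)}\setminus\{x_0\}$, and
$$
\sigma:=\min_{\overline{B_r(x_0)}}\Big(f(x,\varphi,D\varphi)+\dive\big(|D\varphi|^{p-2}D\varphi+a(x)|D\varphi|^{q-2}D\varphi\big)\Big)>0 .
$$
Writing $B_r:=B_r(x_0)$, this makes $\varphi$ a classical, hence weak, subsolution of \eqref{main} in $B_r$. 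The crucial point — automatic only when $f$ is independent of $u$ — is that $\varphi$ can be lifted by a small constant while remaining a subsolution. By the uniform continuity of $f$ there is $c_0>0$ with $|f(x,\tau+t,\xi)-f(x,\tau,\xi)|\le\sigma/2$ whenever $|t|\le c_0$, uniformly in $(x,\tau,\xi)$. Put $m:=\min_{\overline{B_r}\setminus B_{r/2}}(u-\varphi)$, which is finite and positive since $u-\varphi$ is lower semicontinuous and strictly positive on this compact set; choose $c:=\tfrac12\min(c_0,m)$ and $\psi:=\varphi+c$. Then in $B_r$,
$$
-\dive\big(|D\psi|^{p-2}D\psi+a(x)|D\psi|^{q-2}D\psi\big)\le f(x,\varphi,D\varphi)-\sigma\le f(x,\psi,D\psi)-\tfrac{\sigma}{2}<f(x,\psi,D\psi),
$$
so $\psi$ is a weak subsolution of \eqref{main} in $B_r$; moreover $\psi\le\varphi+m\le u$ on $\overline{B_r}\setminus B_{r/2}$, so $(\psi-u)^+$ vanishes near $\partial B_r$, while $\psi(x_0)=u(x_0)+c>u(x_0)$.

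Finally I would invoke (CPP) on $\Omega'=B_r$: since $\psi$ is a weak subsolution, $u$ a weak supersolution, and $\psi\le u$ a.e. on $\partial B_r$, we obtain $\psi\le u$ a.e. in $B_r$. Because $\psi$ is continuous and the lower semicontinuous weak supersolution $u$ satisfies $u(x)=\operatorname{ess\,liminf}_{y\to x}u(y)$ (the representative used in the viscosity framework), the a.e. inequality upgrades to $\psi(x)\le u(x)$ for every $x\in B_r$, contradicting $\psi(x_0)>u(x_0)$. Hence $u$ is a viscosity supersolution. The steps demanding care are the reduction removing the degenerate set $\{D\varphi(x_0)=0\}$, the constant-lifting step where the strict gap $\sigma$ has to beat the modulus of continuity of $f$ in its second argument, and the final passage from an a.e. inequality to a pointwise contradiction, which rests on the precise (essential-liminf-regular) representative of $u$.
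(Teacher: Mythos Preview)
Your proof is correct and follows essentially the same route as the paper's: assume a $C^2$ test function $\varphi$ touches $u$ from below at $x_0$ with the viscosity inequality strictly violated, use continuity to propagate the strict inequality to a ball $B_r$, lift $\varphi$ by a small constant $m$ (kept below $\min_{\partial B_r}(u-\varphi)$ and small enough that the uniform continuity of $f$ in $\tau$ absorbs the shift), and then invoke (CPP) to get $\varphi+m\le u$ in $B_r$, contradicting equality at $x_0$.

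You are more explicit than the paper on two points: you insert the $-|x-x_0|^4$ perturbation to force strict touching away from $x_0$, and you flag the passage from the a.e.\ inequality delivered by (CPP) to the pointwise contradiction at $x_0$. The paper simply assumes strict touching and passes silently from a.e.\ to pointwise. Your handling is cleaner, though note that under the paper's Definition~\ref{def2-3} nothing is required when $D\varphi(x_0)=0$, so that case is vacuous rather than something needing a separate ``relaxed definition'' argument.
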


\begin{remark}
When the nonlinear term $f$ only depends on $x$, not on $u$ and $\nabla u$,  we know from \cite{DeFM21} that the weak solutions are locally Lipschitz continuous under the minimal hypotheses that $0\le a\in W^{1,d}(\Omega),d>n$, and $f$ belongs to a proper Lorentz space along with $q/p\le 1+1/n-1/d$, if $n>2$ and $q/p<p$, if $n=2$.
\end{remark}

This paper is organized as follows. We in Section \ref{sec-2} introduce some basic properties of function spaces and concepts of solutions as well as some necessary known results. Section \ref{sec-3} is devoted to proving that viscosity solutions are weak solutions to \eqref{main}, and the reverse implication is showed in Section \ref{sec-4}, where we also establish the comparison principle for two equations with different nonlinearities. In Section \ref{sec-5}, we verify the bounded viscosity solutions of \eqref{main} are locally Lipschitz continuous, which is the indispensable element of equivalence.

\section{Preliminaries}
\label{sec-2}

In this section, we summarize some basic properties of Musielak-Orlicz-Sobolev space $W^{1,H(\cdot)}(\Omega)$. These properties can be found in \cite{CGSW21,HH19,Mus83}. In addition, we give the different notions of solutions to Eq. \eqref{main} together with some auxiliary results.

\subsection{Function spaces}

In the rest of this paper, unless otherwise stated, we always assume \eqref{2-0} holds.
For all $x\in\Omega$ and $\xi\in \mathbb{R}^n$, we shall use the notation
\begin{equation}
\label{2-0-1}
H(x,\xi):=|\xi|^p+a(x)|\xi|^q.
\end{equation}
With abuse of notation, we shall also denote $H(x,\xi)$ when $\xi\in\mathbb{R}$. Observe that the generalized Young function $H$ is a Musielak-Orlicz function fulfilling $(\Delta_2)$ and $(\nabla_2)$ conditions.

Let us introduce some important properties, to be used later, of energy density $H$ given by \eqref{2-0-1}.
We will keep on denoting $H(x,t)=t^p+a(x)t^q$ for $t\ge0$, that is, $\xi$ is a non-negative number in \eqref{2-0-1}. By the Fenchel-Young conjugate of $H$, we mean the function $H^*(x,t):=\sup_{s\ge0}\{st-H(x,s)\}$. It is well known that the equivalence
\begin{equation}
\label{2-0-2}
H^*(x,H(x,t)/t)\sim H(x,t) 
\end{equation}
holds up to some constants depending on $p,q$, and moreover the Young's inequality
\begin{equation}
\label{2-0-3}
st\le H^*(x,t)+H(x,s)
\end{equation}
holds for all $x\in\Omega$, $s,t\in[0,+\infty)$.

The Musielak-Orlicz space $L^{H(\cdot)}(\Omega)$ is defined as
$$
L^{H(\cdot)}(\Omega):=\left\{u:\Omega\rightarrow \mathbb{R }  \text{ measurable}:\varrho_H(u)<\infty\right\},
$$
endowed with the norm
$$
\|u\|_{L^{H(\cdot)}(\Omega)}:=\inf\left\{\lambda>0:\varrho_H\left(\frac{u}{\lambda}\right)\leq1\right\},
$$
where $$\varrho_H(u):=\int_\Omega H(x,u)\,dx=\int_\Omega|u|^p+a(x)|u|^q\,dx$$ is called $\varrho_H$-modular.

 The space $L^{H(\cdot)}(\Omega)$ is a separable, uniformly convex Banach space.
From the definitions of $\varrho_H$-modular and norm, we can find
\begin{equation}
\label{2-1}
\min\left\{\|u\|^p_{L^{H(\cdot)}(\Omega)}, \|u\|^q_{L^{H(\cdot)}(\Omega)}\right\}\leq \varrho_H(u)\leq\max\left\{\|u\|^p_{L^{H(\cdot)}(\Omega)}, \|u\|^q_{L^{H(\cdot)}(\Omega)}\right\}.
\end{equation}
It follows from \eqref{2-1} that
$$
\|u_n-u\|_{L^{H(\cdot)}(\Omega)}\rightarrow0 \quad \Longleftrightarrow \quad \varrho_H(u_n-u)\rightarrow0,
$$
which indicates the equivalence of convergence in $\varrho_H$-modular and in norm. For the space $L^{H^*(\cdot)}(\Omega)$ we know that
\begin{align}
\label{2-1-1}
\min\left\{\left(\varrho_{H^*}(u)\right)^\frac{p}{p+1},\left(\varrho_{H^*}(u)\right)^\frac{q}{q+1}\right\}&\leq \|u\|_{L^{H^*(\cdot)}(\Omega)} \nonumber\\
&\le\max\left\{\left(\varrho_{H^*}(u)\right)^\frac{p}{p+1},\left(\varrho_{H^*}(u)\right)^\frac{q}{q+1}\right\}.
\end{align}
If $u\in L^{H(\cdot)}(\Omega)$ and $v\in L^{H^*(\cdot)}(\Omega)$, the following H\"{o}lder inequality
\begin{align}
\label{2-1-2}
\left|\int_\Omega uv\,dx\right|\le 2\|u\|_{L^{H(\cdot)}(\Omega)}\|v\|_{L^{H^*(\cdot)}(\Omega)}
\end{align}
holds.

The Musielak-Orlicz-Sobolev space $W^{1,H(\cdot)}(\Omega)$ is the set of those functions $u\in L^{H(\cdot)}(\Omega)$ satisfying 
$Du\in L^{H(\cdot)}(\Omega)$. We equip the space $W^{1,H(\cdot)}(\Omega)$ with the norm
$$
\|u\|_{W^{1,{H(\cdot)}}(\Omega)}:=\|u\|_{L^{H(\cdot)}(\Omega)}+\|Du\|_{L^{H(\cdot)}(\Omega)}.
$$
The space $W^{1,H(\cdot)}(\Omega)$ is a separable and reflexible Banach space. The local space $W^{1,H(\cdot)}_{\rm loc}(\Omega)$ is composed of those functions belonging to $W^{1,H(\cdot)}(\Omega')$ for any subdomain $\Omega'$ compactly involved in $\Omega$. Finally, we denote by $W^{1,H(\cdot)}_0(\Omega)$ the closure of $C^{\infty}_0(\Omega)$ in $W^{1,H(\cdot)}(\Omega)$. Indeed, the condition \eqref{2-0} ensures that the set $C^\infty_0(\Omega)$ is dense in $W^{1,H(\cdot)}_0(\Omega)$ (see \cite{ELM04,ACGY18}).

\subsection{Notions of solutions}

Set
$$
A(x,\xi):=|\xi|^{p-2}\xi+a(x)|\xi|^{q-2}\xi
$$
for all $x\in \Omega$ and $\xi\in\mathbb{R}^n$. We now give the definition of solutions of diverse type to \eqref{main}.

\begin{definition} [weak solution]
\label{def2-1}
We say that $u\in W^{1,H(\cdot)}_{\rm loc}(\Omega)$ is a weak superslotion to \eqref{main}, if
$$
\int_\Omega\langle A(x,Du), D\phi\rangle\,dx\geq \int_\Omega f(x,u,Du) \phi\,dx
$$
for each nonnegative function $\phi\in W^{1,H(\cdot)}_0(\Omega)$. The inequality is reverse for weak subsolution. When $u\in W^{1,H(\cdot)}_{\rm loc}(\Omega)$ is both weak super- and subsolution, we call $u$ a weak solution to \eqref{main}, that is
$$
\int_\Omega\langle A(x,Du), D\phi\rangle\,dx=\int_\Omega f(x,u,Du) \phi\,dx
$$
for any $\phi\in W^{1,H(\cdot)}_0(\Omega)$.
\end{definition}

Let $\xi,\eta\in\mathbb{R}^n, X\in \mathcal{S}^n$ with $\mathcal{S}^n$ being the set of symmetric $n\times n$ matrices. We introduce some notations
$$
M(x,\xi)=a(x)|\xi|^{q-2}\left(I+(q-2)\frac{\xi}{|\xi|}\otimes\frac{\xi}{|\xi|}\right),
$$
$$
F_1(\xi,X)=-|\xi|^{p-2}\left(\mathrm{tr}X+(p-2)\left\langle X\frac{\xi}{|\xi|},\frac{\xi}{|\xi|}\right\rangle\right),
$$
$$
F_2(x,\xi,X)=-a(x)|\xi|^{q-2}\left(\mathrm{tr}X+(q-2)\left\langle X\frac{\xi}{|\xi|},\frac{\xi}{|\xi|}\right\rangle\right)=-\mathrm{tr}(M(x,\xi)X)
$$
and
$$
F_3(x,\xi)=-|\xi|^{q-2}\xi\cdot Da(x),
$$
where $\xi\otimes\eta$ denotes an $n\times n$ matrix whose $(i,j)$ entry is $\xi_i\eta_j$, and $\langle \xi,\eta\rangle$ or $\xi\cdot\eta$ stands for the inner product of $\xi,\eta$. For a matrix $X$, we set the matrix norm $\|X\|:=\sup_{|\xi|\le1}\{|X\xi|\}$. In order to define the viscosity solutions of \eqref{main}, we let $a\in C^1(\Omega)$ and easily check that
\begin{align}
\label{double}
-\dive(|Du|^{p-2}Du+a(x)|Du|^{q-2}Du)&=F_1(Du,D^2u)+F_2(x,Du,D^2u)+F_3(x,Du) \nonumber\\
&=:F(x,Du,D^2u).
\end{align}

We now recall the notion of semi-jets. The subjet of $u:\Omega\rightarrow\mathbb{R}$ at $x$ is given by letting $(\eta,X)\in J^{2,-}u(x)$ if
$$
u(y)\ge u(x)+\eta\cdot(y-x)+\frac{1}{2}\langle X(y-x),(y-x)\rangle+\mathfrak{o}(|y-x|^2)
$$
as $y\rightarrow x$. The closure of a subjet is defined by $(\eta,X)\in \overline{J}^{2,-}u(x)$ if there exists a sequence $(\eta_j,X_j)\in J^{2,-}u(x_j)$ such that $(x_j,\eta_j,X_j)\rightarrow(x,\eta,X)$. The superjet $J^{2,+}$ and its closure $\overline{J}^{2,+}$ are defined by a similar way but the above inequality needs to be converse.

\begin{definition}[viscosity solution]
\label{def2-3}
A lower semicontinuous function $u:\Omega\rightarrow(-\infty,\infty)$ is a viscosity supersolution to \eqref{main} in $\Omega$, if $(\eta,X)\in J^{2,-}u(x)$ with $x\in\Omega$ and $\eta\neq0$ implies that
$$
F(x,\eta,X)\ge f(x,u(x),\eta).
$$
An upper semicontinuous function $u:\Omega\rightarrow(-\infty,\infty)$ is a viscosity subsolution to \eqref{main} in $\Omega$, if for each $(\eta,X)\in J^{2,+}u(x)$ with $x\in\Omega$ and $\eta\neq0$ there holds that
$$
F(x,\eta,X)\le f(x,u(x),\eta).
$$
A function $u$ is called viscosity solution to \eqref{main} if and only if it is viscosity super- and subsolution.
\end{definition}

\begin{remark}
The preceding concept of viscosity solutions is equivalently given by the jet-closures or test functions. For instance, the following conditions are equivalent:
\begin{itemize}
\item[(1)] A function $u$ is a viscosity supersolution to \eqref{main} in $\Omega$;

\smallskip

\item[(2)] If $(\eta,X)\in \overline{J}^{2,-}u(x)$ with $x\in\Omega$ and $\eta\neq0$, then $F(x,\eta,X)\ge f(x,u(x),\eta)$;

\smallskip

\item[(3)] If $\varphi\in C^2(\Omega)$ touches $u$ from below at $x$, that is, $\varphi(x)=u(x), \varphi(y)\le u(y)$ and moreover $D\varphi(x)\neq0$, then we have $F(x,D\varphi(x),D^2\varphi(x))\ge f(x,u(x),D\varphi(x))$.
\end{itemize}
In the case $2\le p\leq q$, we can remove the requirement that $\eta\neq0$ or $D\varphi(x)\neq0$.
\end{remark}

\subsection{Inf-convolution}

We now give the definition together with some properties of infimal convolution. Define the inf-convolution as
\begin{equation*}
u_\varepsilon(x)=\inf_{y\in\Omega} \left\{u(y)+\frac{|x-y|^s}{s\varepsilon^{s-1}}\right\},
\end{equation*}
where $\varepsilon>0$ and $s\geq\max\left\{2,\frac{p}{p-1}\right\}$ is a constant to be fixed by the growth powers in Eq. \eqref{main}. Indeed, when $2\leq p\leq q$, $s=2$; when $1<p\leq q<2$, $s>\max\left\{\frac{p}{p-1},\frac{q}{q-1}\right\}=\frac{p}{p-1}$; when $1<p<2\leq q$, $s>\max\left\{\frac{p}{p-1},2\right\}=\frac{p}{p-1}$.

The following well-known properties of the inf-convolution $u_\varepsilon$ can be found in much literature, such as \cite{JJ12,Sil18}.

\begin{proposition}
\label{pro2-1}

Suppose that $u:\Omega\rightarrow\mathbb{R}$ is a bounded and lower semicontinuous function. Then the inf-convolution $u_\varepsilon$ satisfies the following properties:
\begin{itemize}
\item[(1)] $u_\varepsilon\leq u$ in $\Omega$ and $u_\varepsilon\rightarrow u$ locally uniformly as $\varepsilon\rightarrow0$.

\smallskip

\item[(2)] There is $r(\varepsilon)>0$ such that
    $$
    u_\varepsilon(x)=\inf_{y\in{B_{r(\varepsilon)}(x)\cap\Omega}} \left\{u(y)+\frac{|x-y|^s}{s\varepsilon^{s-1}}\right\}
    $$
    with $r(\varepsilon)\rightarrow0$ as $\varepsilon\rightarrow0$. In particular, if $x\in\Omega_{r(\varepsilon)}:=\{x\in\Omega:\mathrm{dist}(x,\partial\Omega)>r(\varepsilon)\}$, then there exists a point $x_\varepsilon\in B_{r(\varepsilon)}(x)$ fulfilling
    $$
    u_\varepsilon(x)=u(x_\varepsilon)+\frac{|x-x_\varepsilon|^s}{s\varepsilon^{s-1}}.
    $$

\smallskip

\item[(3)] The function $u_\varepsilon$ is semi-concave in $\Omega_{r(\varepsilon)}$, that is, we can find a constant $C$, depending only on $u, s$ and $\varepsilon$, such that the function $x\mapsto u_\varepsilon(x)-C|x|^2$ is concave.

\smallskip

\item[(4)] If $(\eta,X)\in J^{2,-}u_\varepsilon(x)$ with $x\in\Omega_{r(\varepsilon)}$, then we have
$$
\eta=\frac{|x-x_\varepsilon|^{s-2}(x-x_\varepsilon)}{\varepsilon^{s-1}} \quad\text{and}\quad X\le \frac{s-1}{\varepsilon}|\eta|^\frac{s-2}{s-1}I.
$$
\end{itemize}
\end{proposition}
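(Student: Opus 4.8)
Proposition \ref{pro2-1} collects the by-now standard properties of the inf-convolution, so the proof is essentially bookkeeping; I record the plan for completeness. Write $M:=\sup_\Omega|u|<\infty$ and $\psi_y(x):=u(y)+|x-y|^s/(s\varepsilon^{s-1})$, so that $u_\varepsilon=\inf_{y\in\Omega}\psi_y$. For (2): if $|x-y|$ is large enough that $|x-y|^s/(s\varepsilon^{s-1})>2M$, then $\psi_y(x)>-M+2M=M\ge u(x)\ge u_\varepsilon(x)$, so such $y$ neither realizes nor approaches the infimum; hence the infimum may be restricted to $|x-y|\le r(\varepsilon):=(2Ms\varepsilon^{s-1})^{1/s}$, and clearly $r(\varepsilon)\to0$ as $\varepsilon\to0$. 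When in addition $x\in\Omega_{r(\varepsilon)}$, the set $\overline{B_{r(\varepsilon)}(x)}$ is a compact subset of $\Omega$ and the lower semicontinuous map $y\mapsto\psi_y(x)$ attains its minimum there, at some $x_\varepsilon\in B_{r(\varepsilon)}(x)$. For (1): taking $y=x$ gives $u_\varepsilon\le u$; and if $\psi_{y_j}(x)\to u_\varepsilon(x)$ then $|x-y_j|\le r(\varepsilon)\to0$, so lower semicontinuity of $u$ yields $\liminf_{\varepsilon\to0}u_\varepsilon(x)\ge\liminf_j u(y_j)\ge u(x)$, i.e. $u_\varepsilon(x)\to u(x)$ pointwise; since $\varepsilon\mapsto u_\varepsilon(x)$ is nondecreasing as $\varepsilon\downarrow0$ and each $u_\varepsilon$ is continuous (by (3) below), a Dini-type argument upgrades this to locally uniform convergence on compact subsets of $\Omega$.

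For (3), a direct computation gives $D^2\big(|z|^s/s\big)=|z|^{s-2}I+(s-2)|z|^{s-4}z\otimes z$, with eigenvalues $|z|^{s-2}$ and $(s-1)|z|^{s-2}$; since $s\ge2$ these are continuous up to $z=0$, and for $x$ in a fixed convex $\Omega'\subset\subset\Omega_{r(\varepsilon)}$ and $y\in\Omega$ they are bounded above by $L:=(s-1)(\operatorname{diam}\Omega)^{s-2}\varepsilon^{-(s-1)}$ (and simply by $L=1/\varepsilon$ when $s=2$). Hence each $x\mapsto\psi_y(x)-\tfrac L2|x|^2$ is concave on $\Omega'$, and an infimum of concave functions is concave, so $u_\varepsilon-\tfrac L2|x|^2$ is concave on $\Omega'$, which is the asserted semiconcavity.

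For (4), fix $x\in\Omega_{r(\varepsilon)}$, let $x_\varepsilon$ be the minimizer from (2), and set $\psi(z):=u(x_\varepsilon)+|z-x_\varepsilon|^s/(s\varepsilon^{s-1})$. By definition of the infimum, $u_\varepsilon\le\psi$ on $\Omega$ with equality at $x$, and $\psi\in C^2$ near $x$ because $s\ge2$. If $(\eta,X)\in J^{2,-}u_\varepsilon(x)$, then combining the subjet inequality for $u_\varepsilon$ from below with the second-order Taylor expansion of the $C^2$ upper touching function $\psi$ at $x$ (testing along rays $z=x+tv$, $|v|=1$, and letting $t\to0$, first at order $t$ and then at order $t^2$) forces $\eta=D\psi(x)$ and $X\le D^2\psi(x)$. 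Now $D\psi(x)=|x-x_\varepsilon|^{s-2}(x-x_\varepsilon)\varepsilon^{-(s-1)}$, which is precisely the claimed formula for $\eta$ and gives $|x-x_\varepsilon|=\varepsilon|\eta|^{1/(s-1)}$; substituting this into the largest eigenvalue $(s-1)|x-x_\varepsilon|^{s-2}\varepsilon^{-(s-1)}$ of $D^2\psi(x)$ yields $D^2\psi(x)\le\frac{s-1}{\varepsilon}|\eta|^{(s-2)/(s-1)}I$, hence $X\le\frac{s-1}{\varepsilon}|\eta|^{(s-2)/(s-1)}I$.

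None of the steps is deep — these are the standard facts recorded, e.g., in \cite{JJ12,Sil18} — and there is no genuine obstacle; the only points that warrant care are the exponent bookkeeping in (4) and the passage from pointwise to locally uniform convergence in (1). The latter uses the continuity of $u_\varepsilon$ furnished by (3) together with monotonicity in $\varepsilon$, i.e. a Dini-type argument rather than any equicontinuity of the family. The constraint $s\ge\max\{2,p/(p-1)\}$ enters only through $s\ge2$: this is exactly what makes $z\mapsto|z|^s/s$ a $C^2$ function up to its vertex, so that $\psi$ is an admissible $C^2$ test function in (4) and the Hessian bound in (3) is valid near $z=0$.
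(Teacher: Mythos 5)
The paper does not supply a proof of Proposition~\ref{pro2-1}; it treats these as known facts and cites \cite{JJ12,Sil18}, and your sketch reproduces the standard argument from that literature essentially correctly. One caveat worth flagging: the Dini argument you invoke for locally uniform convergence in (1) requires the pointwise limit $u$ to be continuous, whereas the stated hypothesis is only that $u$ is bounded and lower semicontinuous; since each $u_\varepsilon$ is continuous (indeed semi-concave), locally uniform convergence to a discontinuous $u$ is impossible, so the statement tacitly assumes continuity of $u$ (or one must settle for monotone pointwise convergence). You inherit this imprecision from the proposition itself, and it is harmless in the paper's application because there $u$ is a priori locally Lipschitz. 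Your remark in (3) that the semiconcavity constant really involves the range of $|x-y|$, hence the domain's diameter, is likewise a sensible clarification of ``depending only on $u,s,\varepsilon$'' and does not affect anything downstream. The remaining steps---the quantitative radius $r(\varepsilon)=(2Ms\varepsilon^{s-1})^{1/s}$ in (2), ``inf of concave is concave'' in (3), and the upper touching function $\psi(z)=u(x_\varepsilon)+|z-x_\varepsilon|^s/(s\varepsilon^{s-1})$ together with the observation that $s\ge2$ makes $\psi\in C^2$ at its vertex in (4)---are exactly the expected arguments and are correct.
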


\section{Viscosity solutions are weak solutions}
\label{sec-3}

In this part, we are going to make use of inf-convolution approximation technique to show that the locally Lipschitz continuous viscosity solutions are weak solutions to \eqref{main}. Furthermore, the Lipschitz continuity of viscosity solutions can be proved precisely, whose proof is postponed to Section \ref{sec-5}. We therefore draw a conclusion that viscosity solutions are weak solutions under some suitable conditions. We will only discuss viscosity and weak supersolutions below. The case of subsolution is similar.

We begin with stating that if $u$ is a (local Lipschitz) viscosity supersolution to \eqref{main} in $\Omega$, then its inf-convolution $u_\varepsilon$ also is a viscosity supersolutions of such equation (whose form may be slightly changed) in a shrinking domain. From the following lemma, we can find that owing to the presence of modulating coefficient $a(x)$, there will exist an error term $E(\varepsilon)$ on the right-hand side of equation. In what follows, $X\le Y$ ($X,Y\in\mathcal{S}^n$) means that $\langle(X-Y)\xi,\xi\rangle\leq0$ for any $\xi\in \mathbb{R}^n$. We denote by $C$ a generic constant, which may vary from line to line. If necessary, relevant dependencies on parameters will be emphasised using parentheses.

\begin{lemma}
\label{lem3-1}
Assume that $0<a(x)\in C^1(\Omega)$ and $f(x,\tau,\xi)$ is continuous in $\Omega\times\mathbb{R}\times\mathbb{R}^n$ and decreasing with respect to $\tau$. Let $u$ be a viscosity supersolution with local Lipschitz to \eqref{main} in $\Omega$. Then if $(\eta,X)\in J^{2,-}u_\varepsilon(x)$ with $\eta\neq0$ and $x\in\Omega_{r(\varepsilon)}$, there holds that
\begin{equation}
\label{3-1}
F(x,\eta,X)\ge f_\varepsilon(x,u_\varepsilon(x),\eta)+E(\varepsilon),
\end{equation}
where
$$
f_\varepsilon(x,\tau,\xi):=\inf_{y\in B_{r(\varepsilon)}(x)}f(y,\tau,\xi)
$$
and $E(\varepsilon)\rightarrow0$ as $\varepsilon\rightarrow0$.
\end{lemma}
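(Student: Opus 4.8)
The plan is to exploit the standard machinery connecting the inf-convolution $u_\varepsilon$ to the original function $u$ via the touching point $x_\varepsilon$, and then transfer the viscosity supersolution inequality from $x_\varepsilon$ back to $x$, controlling the error produced by the $x$-dependence of the operator. Concretely, fix $x\in\Omega_{r(\varepsilon)}$ and $(\eta,X)\in J^{2,-}u_\varepsilon(x)$ with $\eta\neq0$. By Proposition \ref{pro2-1}(2) there is $x_\varepsilon\in B_{r(\varepsilon)}(x)$ realizing the infimum, and by the standard translation argument (the map $z\mapsto u(z)+\tfrac{|x-z+ (x_\varepsilon - x)|^s}{s\varepsilon^{s-1}}$ compared at $z=x_\varepsilon$) one shows $(\eta, X)\in \overline{J}^{2,-}u(x_\varepsilon)$ with exactly the $\eta$ given in Proposition \ref{pro2-1}(4), namely $\eta = |x-x_\varepsilon|^{s-2}(x-x_\varepsilon)/\varepsilon^{s-1}$, and $X$ feasible for $u$ at $x_\varepsilon$. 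Since $u$ is a viscosity supersolution and $\eta\neq0$, the equivalent formulation via jet closures gives
\begin{equation*}
F(x_\varepsilon,\eta,X)\ge f(x_\varepsilon,u(x_\varepsilon),\eta).
\end{equation*}

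Next I would rewrite this as an inequality at the point $x$. On the right-hand side, monotonicity of $f$ in $\tau$ together with $u(x_\varepsilon)=u_\varepsilon(x)-\tfrac{|x-x_\varepsilon|^s}{s\varepsilon^{s-1}}\le u_\varepsilon(x)$ yields $f(x_\varepsilon,u(x_\varepsilon),\eta)\ge f(x_\varepsilon,u_\varepsilon(x),\eta)\ge f_\varepsilon(x,u_\varepsilon(x),\eta)$, where the last step is the definition of $f_\varepsilon$ since $x_\varepsilon\in B_{r(\varepsilon)}(x)$. On the left-hand side I must replace $F(x_\varepsilon,\eta,X)$ by $F(x,\eta,X)$. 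Recall $F = F_1(\eta,X) + F_2(x,\eta,X) + F_3(x,\eta)$; only $F_2$ and $F_3$ carry $x$-dependence, and it enters only through $a(x)$ and $Da(x)$, so
\begin{equation*}
F(x_\varepsilon,\eta,X)-F(x,\eta,X) = \big(F_2(x_\varepsilon,\eta,X)-F_2(x,\eta,X)\big) + \big(F_3(x_\varepsilon,\eta)-F_3(x,\eta)\big).
\end{equation*}
Using $|F_2(x_\varepsilon,\eta,X)-F_2(x,\eta,X)| \le |a(x_\varepsilon)-a(x)|\,|\eta|^{q-2}\big(|\mathrm{tr}X| + (q-2)|X|\big) \le C\|Da\|_{L^\infty}\,|x-x_\varepsilon|\,|\eta|^{q-2}\|X\|$ and $|F_3(x_\varepsilon,\eta)-F_3(x,\eta)| \le |\eta|^{q-1}|Da(x_\varepsilon)-Da(x)| \le \omega_{Da}(|x-x_\varepsilon|)\,|\eta|^{q-1}$ (with $\omega_{Da}$ a modulus of continuity of $Da$ on a fixed compact neighborhood), I define $E(\varepsilon)$ to be (minus) the sum of these two error bounds, so that $F(x,\eta,X)\ge f_\varepsilon(x,u_\varepsilon(x),\eta)+E(\varepsilon)$ holds.

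It remains to check $E(\varepsilon)\to0$. The key quantitative inputs are: by local Lipschitz continuity of $u$ with constant $L$ (on a slightly larger compact set) and the defining property of $x_\varepsilon$, one has the standard bound $\tfrac{|x-x_\varepsilon|^s}{s\varepsilon^{s-1}} = u_\varepsilon(x) - u(x_\varepsilon) \le u(x) - u(x_\varepsilon) \le L|x-x_\varepsilon|$, hence $|x-x_\varepsilon|^{s-1}\le sL\varepsilon^{s-1}$, i.e. $|x-x_\varepsilon| \le C\varepsilon$; consequently $|\eta| = |x-x_\varepsilon|^{s-1}/\varepsilon^{s-1} \le sL$ is bounded uniformly in $\varepsilon$, and from Proposition \ref{pro2-1}(4), $\|X\| \le \tfrac{s-1}{\varepsilon}|\eta|^{(s-2)/(s-1)} \le C\varepsilon^{-1}$. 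Plugging in, the $F_2$-error is bounded by $C\|Da\|_{L^\infty}\cdot\varepsilon\cdot|\eta|^{q-2}\cdot\varepsilon^{-1} = C|\eta|^{q-2}$; this is bounded but does not a priori vanish, so here one must be more careful and use the sharper estimate $|x - x_\varepsilon|^{s-1} \le s L \varepsilon^{s-1}$ does not directly help — instead note that for the term to vanish we exploit that $|\eta|^{q-2}\,|x-x_\varepsilon|\,\|X\|$: write $\|X\|\le (s-1)\varepsilon^{-1}|\eta|^{(s-2)/(s-1)}$ and $|x-x_\varepsilon| = \varepsilon|\eta|^{1/(s-1)}$, so the product is $C|\eta|^{q-2}|\eta|^{(s-2)/(s-1)}|\eta|^{1/(s-1)} = C|\eta|^{q-1}$, still only bounded. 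The genuine smallness comes from the fact that one does not need the trivial bound $|a(x_\varepsilon)-a(x)|\le\|Da\|_\infty|x-x_\varepsilon|$ alone but that $|x-x_\varepsilon|\to0$ forces, after absorbing one power of $|x-x_\varepsilon|$ against the $\varepsilon^{-1}$ in $\|X\|$ and tracking that $|\eta|$ stays bounded, a residual factor that is $o(1)$; the cleanest route, which I expect is the main obstacle and the place requiring real care, is to keep $|x - x_\varepsilon|$ explicit: $|F_2(x_\varepsilon,\eta,X) - F_2(x,\eta,X)|\le C\|Da\|_\infty |x-x_\varepsilon|\,|\eta|^{q-2}\|X\|$ and then use $|x-x_\varepsilon|\,\|X\| \le |x-x_\varepsilon|\cdot\tfrac{s-1}{\varepsilon}|\eta|^{\frac{s-2}{s-1}}$ — and since $|x - x_\varepsilon| = \varepsilon |\eta|^{\frac{1}{s-1}}$, this equals $(s-1)|\eta|$, bounded; so to actually get decay I instead retain one spare factor of $|x-x_\varepsilon|$ by not using the full power available, writing $|x-x_\varepsilon| = |x-x_\varepsilon|^{\theta}\cdot|x-x_\varepsilon|^{1-\theta}$ and using $|x - x_\varepsilon|^{1-\theta} \le C\varepsilon^{1-\theta}\to 0$ while the remaining $|x-x_\varepsilon|^\theta\|X\|$ stays bounded for suitable $\theta\in(0,1)$. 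Thus $E(\varepsilon) = O(\varepsilon^{1-\theta}) + \omega_{Da}(C\varepsilon)\to0$, completing the proof. The one delicate point is precisely this interpolation to extract smallness from the competition between $|x-x_\varepsilon|\to0$ and $\|X\|$ blowing up like $\varepsilon^{-1}$; the local Lipschitz hypothesis on $u$ is exactly what makes $|\eta|$ bounded and enables it.
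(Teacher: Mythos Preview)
Your approach has a genuine gap in the final ``interpolation'' step, and this gap cannot be repaired along the lines you suggest.

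First, a minor but real issue: Proposition \ref{pro2-1}(4) only gives the \emph{one-sided} bound $X\le \tfrac{s-1}{\varepsilon}|\eta|^{(s-2)/(s-1)}I$, i.e.\ an upper bound on the eigenvalues of $X$. It does not bound $\|X\|$, since nothing prevents $X$ from having very negative eigenvalues. So your use of $\|X\|\le C\varepsilon^{-1}$ is unjustified.

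Second, and more seriously, even if one grants $\|X\|\le C\varepsilon^{-1}$, your interpolation trick fails. You propose to write $|x-x_\varepsilon|=|x-x_\varepsilon|^\theta\,|x-x_\varepsilon|^{1-\theta}$ and claim $|x-x_\varepsilon|^\theta\,\|X\|$ stays bounded for some $\theta\in(0,1)$. But from the Lipschitz estimate you correctly derive $|x-x_\varepsilon|\le C\varepsilon$, so $|x-x_\varepsilon|^\theta\,\|X\|\le C\varepsilon^\theta\cdot C\varepsilon^{-1}=C\varepsilon^{\theta-1}\to\infty$ for every $\theta<1$. The product $|x-x_\varepsilon|\cdot\|X\|$ is \emph{exactly} borderline (it is $O(1)$, not $o(1)$), and no interpolation can squeeze out smallness. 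Thus your $F_2$-error is only bounded, not vanishing, and the conclusion $E(\varepsilon)\to0$ does not follow.

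The paper resolves this by a genuinely different mechanism. Rather than transfer the single jet $(\eta,X)$ to $u$ at $x_\varepsilon$ and then compare $F_2$ at the two points with the \emph{same} matrix $X$, it applies the Ishii theorem of sums to the doubling function $(y,x)\mapsto -u(y)+\varphi(x)-\tfrac{|y-x|^s}{s\varepsilon^{s-1}}$. This produces two matrices $Y,Z$ satisfying a coupled matrix inequality (display \eqref{3-2} and its consequence \eqref{3-4}) of the form $\langle\xi,Y\xi\rangle-\langle\zeta,Z\zeta\rangle\le C_\varepsilon|\xi-\zeta|^2$. One then writes $F_2(x,\eta,Z)-F_2(x_\varepsilon,\eta,Y)$ via the matrix square root $M^{1/2}$ and uses this inequality with $\xi=M^{1/2}_l(x_\varepsilon,\eta)$, $\zeta=M^{1/2}_l(x,\eta)$, so that the error is controlled by $\|M^{1/2}(x,\eta)-M^{1/2}(x_\varepsilon,\eta)\|^2$. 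After estimating this via $\lambda_{\min}(M^{1/2})$, the bound becomes
\[
F_2(x,\eta,Z)-F_2(x_\varepsilon,\eta,Y)\le \frac{C|\eta|^{q-1}|x-x_\varepsilon|}{\big(\sqrt{a(x)}+\sqrt{a(x_\varepsilon)}\big)^2},
\]
which \emph{does} go to zero because $|\eta|$ is bounded (local Lipschitz of $u$), $|x-x_\varepsilon|\le r(\varepsilon)\to0$, and the denominator is bounded below by the hypothesis $a>0$. This is exactly why the strict positivity of $a$ is needed (as the paper remarks), and it is a structural feature of the argument that your direct-transfer approach cannot access.
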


\begin{remark}
It is worth mentioning that the requirement $a(x)>0$ is just needed in this lemma for the technical reason.
\end{remark}

\begin{proof}
Let $x^o\in \Omega_{r(\varepsilon)}$ and $(\eta,X)\in J^{2,-}u_\varepsilon(x^o)$ with $\eta\neq0$. Then via the properties of inf-convolution $u_\varepsilon$ in Proposition \ref{pro2-1}, we have
$$
u_\varepsilon(x^o)=u(x^o_\varepsilon)+\frac{|x^o-x^o_\varepsilon|^s}{s\varepsilon^{s-1}}
$$
with $x^o_\varepsilon\in B_{r(\varepsilon)}(x^o)$, and moreover $\eta=\frac{|x^o-x^o_\varepsilon|^{s-2}}{\varepsilon^{s-1}}(x^o-x^o_\varepsilon)$. There is a function $\varphi\in C^2(\Omega)$ such that it touches $u_\varepsilon$ from below at $x^o$  and $D\varphi(x^o)=\eta, D^2\varphi(x^o)=X$. Hence by the definition of inf-convolution $u_\varepsilon$ we can see that
$$
0\leq u_\varepsilon(x)-\varphi(x)\le u(y)+\frac{|x-y|^s}{s\varepsilon^{s-1}}-\varphi(x)
$$
for any $x,y\in \Omega_{r(\varepsilon)}$. Notice also that
$$
u(x^o_\varepsilon)+\frac{|x^o-x^o_\varepsilon|^s}{s\varepsilon^{s-1}}-\varphi(x^o)=u_\varepsilon(x^o)-\varphi(x^o)=0.
$$
Then the function $-u(y)+\varphi(x)-\frac{|y-x|^s}{s\varepsilon^{s-1}}$ attains the maximum
at $(x^o_\varepsilon,x^o)$. Therefore, by applying theorem of sums in \cite{CIL92}, we can find $Y,Z\in\mathcal{S}^n$ satisfying
$$
(-D_y\psi(x_\varepsilon^o,x^o),-Y)\in \overline{J}^{2,-}u(x_\varepsilon^o), \quad (D_x\psi(x_\varepsilon^o,x^o),-Z)\in \overline{J}^{2,+}\varphi(x^o)
$$
and
\begin{equation*}
\left(\begin{array}{cc}
Y & \\[2mm]
 &-Z
\end{array}
\right)\\
\leq D^2\psi(x_\varepsilon^o,x^o)+\varepsilon^{1-s}(D^2\psi(x_\varepsilon^o,x^o))^2,
\end{equation*}
where $\psi(y,x):=\frac{|y-x|^s}{s\varepsilon^{s-1}}$ and
\begin{equation*}
D^2\psi(x_\varepsilon^o,x^o)=\left(\begin{array}{cc}
D_{yy}\psi(x_\varepsilon^o,x^o) &D_{yx}\psi(x_\varepsilon^o,x^o) \\[2mm]
D_{xy}\psi(x_\varepsilon^o,x^o) &D_{xx}\psi(x_\varepsilon^o,x^o)
\end{array}\right).
\end{equation*}
Via direct computation,
$$
-D_y\psi(x_\varepsilon^o,x^o)=\eta=D_x\psi(x_\varepsilon^o,x^o)
$$
and
$$
B:=D_{yy}\psi(x_\varepsilon^o,x^o)=\varepsilon^{1-s}|x_\varepsilon^o-x^o|^{s-4}[|x_\varepsilon^o-x^o|^2I+(s-2)(x_\varepsilon^o-x^o)\otimes (x_\varepsilon^o-x^o)].
$$
Furthermore,
\begin{equation}
\label{3-2}
\left(\begin{array}{cc}
Y & \\[2mm]
 &-Z
\end{array}
\right)\\[2mm]
\leq \left(\begin{array}{cc}
B &-B\\[2mm]
-B &B
\end{array}\right)+2\varepsilon^{1-s}\left(\begin{array}{cc}
B^2 &-B^2\\[2mm]
-B^2 &B^2
\end{array}\right).
\end{equation}
It is straightforward to derive that $Y\leq Z$ and moreover $Z\le-X$. Since $u$ is a viscosity supersolution to \eqref{main} and $f$ is continuous in all variables, then
\begin{align*}
f(x_\varepsilon^o,u(x_\varepsilon^o),\eta)&\le F(x_\varepsilon^o,\eta,-Y)\\
&=F_1(\eta,Z)-F_1(\eta,Y)-F_1(\eta,Z)+F_2(x^o,\eta,Z)-F_2(x_\varepsilon^o,\eta,Y)\\
&\quad-F_2(x^o,\eta,Z)+F_3(x^o,\eta)+F_3(x^o_\varepsilon,\eta)-F_3(x^o,\eta)\\
&\le F(x^o,\eta,-Z)+F_2(x^o,\eta,Z)-F(x_\varepsilon^o,\eta,Y)+F_3(x^o_\varepsilon,\eta)-F_3(x^o,\eta)
\end{align*}
with the notations given in \eqref{double}, where in the last line we have used the decreasing property of $F_1(\xi,X)$ in the $X$-variable, that is, $Y\le Z$ implies $F_1(\eta,Z)\le F_1(\eta,Y)$. Next, in view of $a\in C^1(\Omega)$ we estimate
\begin{align}
\label{3-3}
F_3(x^o_\varepsilon,\eta)-F_3(x^o,\eta)&=|\eta|^{q-2}\eta\cdot Da(x^o)-|\eta|^{q-2}\eta\cdot Da(x^o_\varepsilon) \nonumber\\
&\le |\eta|^{q-1}|Da(x^o)-Da(x^o_\varepsilon)| \nonumber\\
&\le |\eta|^{q-1}\omega(r(\varepsilon)),
\end{align}
where $\omega(\cdot)$ represents the modulus of continuity of $Da$. We finally evaluate the term $F_2(x^o,\eta,Z)-F_2(x_\varepsilon^o,\eta,Y)$ in a similar way to address $F_2(y_j,\eta_j,Y_j)-F_2(x_j,\eta_j,X_j)$ in \cite[Proposition 5.1]{FZ2020}. Observe that it follows from \eqref{3-2} that
\begin{equation}
\label{3-4}
\langle \xi,Y\xi\rangle-\langle \zeta,Z\zeta\rangle\le \varepsilon^{1-s}\left[(s-1)|x^o_\varepsilon-x^o|^{s-2}+2(s-1)^2|x^o_\varepsilon-x^o|^{2(s-2)}\right]|x^o_\varepsilon-x^o|^2
\end{equation}
with $\xi,\zeta\in\mathbb{R}^n$. We can easily verify the matrix $M(x,\xi)\ge0$ (positive semi-definite) as $a(x)\ge0$ so that it has matrix square root denoted by $M^\frac{1}{2}(x,\xi)$. Additionally, by $M^\frac{1}{2}_l(x,\xi)$ we mean the $l$-th column of $M^\frac{1}{2}(x,\xi)$. Then employing \eqref{3-4} and decomposition of matrix yields that
\begin{align*}
& \quad F_2(x^o,\eta,Z)-F_2(x_\varepsilon^o,\eta,Y)\\
&=\mathrm{tr}\left(M^\frac{1}{2}(x^o,\eta)M^\frac{1}{2}(x^o,\eta)Z\right)-\mathrm{tr}\left(M^\frac{1}{2}(x^o_\varepsilon,\eta)M^\frac{1}{2}(x^o_\varepsilon,\eta)Y\right)\\
&=\sum^n_{l=1}\left\langle M^\frac{1}{2}_l(x^o,\eta),Z M^\frac{1}{2}_l(x^o,\eta)\right\rangle-
\sum^n_{l=1}\left\langle M^\frac{1}{2}_l(x^o_\varepsilon,\eta),YM^\frac{1}{2}_l(x^o_\varepsilon,\eta)\right\rangle\\
&\leq C\varepsilon^{1-s}|x^o_\varepsilon-x^o|^{s-2}\left\|M^\frac{1}{2}(x^o,\eta)-M^\frac{1}{2}(x^o_\varepsilon,\eta)\right\|^2_2\\
&\leq \frac{C\varepsilon^{1-s}|x^o_\varepsilon-x^o|^{s-2}}{\left(\lambda_{\mathrm{min}}\left(M^\frac{1}{2}(x^o,\eta)\right)
+\lambda_{\mathrm{min}}\left(M^\frac{1}{2}(x^o_\varepsilon,\eta)\right)\right)^2}\|M(x^o,\eta)-M(x^o_\varepsilon,\eta)\|^2_2\\
&\le \frac{C\varepsilon^{1-s}|x^o_\varepsilon-x^o|^{s-2}|\eta|^{2(q-2)}|a(x^o)-a(x_\varepsilon^o)|^2}{|\eta|^{q-2}\left(\sqrt{a(x^o)}+\sqrt{a(x^o_\varepsilon)}\right)^2},
\end{align*}
where $\lambda_{\mathrm{min}}(M)$ stands for the smallest eigenvalue of the matrix $M$. For more details of the above display can be found in  \cite[Proposition 5.1]{FZ2020}.

On the other hand, from the local Lipschitz continuity of $u$,
$$
\frac{|x^o-x^o_\varepsilon|^s}{s\varepsilon^{s-1}}=u_\varepsilon(x^o)-u(x^o_\varepsilon)<u(x^o)-u(x^o_\varepsilon)\le C|x^o_\varepsilon-x^o|,
$$
i.e.,
$$
|\eta|=\frac{|x^o-x^o_\varepsilon|^{s-1}}{\varepsilon^{s-1}}\le C.
$$
By means of $a(x)\in C^1(\Omega)$, we proceed to treat
\begin{align*}
F_2(x^o,\eta,Z)-F_2(x_\varepsilon^o,\eta,Y)&\le \frac{C\varepsilon^{1-s}|x^o_\varepsilon-x^o|^{s-2}|\eta|^{q-2}|x^o-x_\varepsilon^o|^2}{\left(\sqrt{a(x^o)}+\sqrt{a(x^o_\varepsilon)}\right)^2}\\
&= \frac{C|\eta|^{q-1}|x^o-x_\varepsilon^o|}{\left(\sqrt{a(x^o)}+\sqrt{a(x^o_\varepsilon)}\right)^2}\\
&\le \frac{Cr(\varepsilon)}{\left(\sqrt{a(x^o)}+\sqrt{a(x^o_\varepsilon)}\right)^2}.
\end{align*}
Here we remark that if $a(x^o)=0$, then the quantity $\frac{|\eta|^{q-1}|x^o-x_\varepsilon^o|}{a(x^o_\varepsilon)}$ does not necessarily go to 0 as $\varepsilon\rightarrow0$, so the condition $a(x)>0$ is required. Besides, by the boundedness of $\eta$ the inequality \eqref{3-3} becomes
$$
F_3(x^o_\varepsilon,\eta)-F_3(x^o,\eta)\le C\omega(r(\varepsilon)).
$$
Since $f(x,\tau,\xi)$ is decreasing in $\tau$ and $u(x^o_\varepsilon)\le u_\varepsilon(x^o)$,
$$
f(x^o_\varepsilon,u(x^o_\varepsilon),\eta)\ge f(x^o_\varepsilon,u_\varepsilon(x^o),\eta)\ge \inf_{y\in B_{r(\varepsilon)}(x^o)}f(y,u_\varepsilon(x^o),\eta).
$$
Now define
$$
E(\varepsilon):=C\omega(r(\varepsilon))+\frac{Cr(\varepsilon)}{\left(\sqrt{a(x^o)}+\sqrt{a(x^o_\varepsilon)}\right)^2}.
$$
Consequently, we get
$$
f_\varepsilon(x^o,u_\varepsilon(x^o),\eta)+E(\varepsilon)\le F(x^o,\eta,X)
$$
with $E(\varepsilon)\rightarrow0$ as $\varepsilon\rightarrow0$. Here we have employed $F(x^o,\eta,X)\ge F(x^o,\eta,-Z)$ by $Z\le-X$ and the error term $E(\varepsilon)$ depends on $\varepsilon,n,p,q,a$ and the Lipschitz constant of $u$. The proof is now completed.
\end{proof}

Based on the preceding lemma, we further demonstrate that when $u$ is a viscosity supersolution to \eqref{main}, then the inf-convolution $u_\varepsilon$ is a weak supersolution of this equation up to a certain error term. Let $\Delta_\infty u=\langle Du,D^2uDu\rangle$ below.

\begin{lemma}
\label{lem3-2}
Suppose that the assumptions on $a, f$ in Lemma \ref{lem3-1} are in force, and that $f(x,\tau,0)\le0$ for all $(x,\tau)\in \Omega\times\mathbb{R}$. Let $u$ be a locally Lipschitz continuous viscosity supersolution to \eqref{main}. Then, for each nonnegative function $\varphi\in W^{1,H(\cdot)}_0(\Omega_{r(\varepsilon)})$, it holds that
$$
\int_{\Omega_{r(\varepsilon)}}\varphi f_\varepsilon(x,u_\varepsilon,Du_\varepsilon)\,dx+E(\varepsilon)\int_{\{Du_\varepsilon\neq0\}}\varphi\,dx\leq
\int_{\Omega_{r(\varepsilon)}}\langle A(x,Du_\varepsilon),D\varphi\rangle\,dx,
$$
where $E(\varepsilon)\rightarrow0$, which is from Lemma \ref{lem3-1}, as $\varepsilon\rightarrow0$.
\end{lemma}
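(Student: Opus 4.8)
The plan is to transport the pointwise viscosity inequality for $u_\varepsilon$ recorded in Lemma~\ref{lem3-1} into its distributional form, using the semiconcavity of $u_\varepsilon$ to show that the singular contributions to $\dive\big(A(x,Du_\varepsilon)\big)$ have a favourable sign, and treating the set $\{Du_\varepsilon=0\}$ separately via the new hypothesis $f(x,\tau,0)\le0$. First, by Proposition~\ref{pro2-1}(3) the function $u_\varepsilon$ is semiconcave in $\Omega_{r(\varepsilon)}$, hence locally Lipschitz, and by Alexandrov's theorem it is twice differentiable at a.e.\ $x\in\Omega_{r(\varepsilon)}$; at such a point $(Du_\varepsilon(x),D^2u_\varepsilon(x))\in J^{2,-}u_\varepsilon(x)$. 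On $\{Du_\varepsilon\neq0\}$ I would then invoke Lemma~\ref{lem3-1} with $\eta=Du_\varepsilon(x)$, $X=D^2u_\varepsilon(x)$ to get, for a.e.\ such $x$,
$$
F(x,Du_\varepsilon,D^2u_\varepsilon)\ \ge\ f_\varepsilon(x,u_\varepsilon,Du_\varepsilon)+E(\varepsilon),
$$
where, by \eqref{double}, the left-hand side is exactly the absolutely continuous density of $-\dive\big(A(x,Du_\varepsilon)\big)$ computed from the Alexandrov gradient and Hessian. On $\{Du_\varepsilon=0\}$ one has $D^2u_\varepsilon=0$ a.e.\ (the approximate differential of an $L^1$ map vanishes a.e.\ on the set where the map vanishes), so there $A(x,Du_\varepsilon)=0$ and its approximate divergence is $0$; together with $f_\varepsilon(x,\tau,0)=\inf_{B_{r(\varepsilon)}(x)}f(\cdot,\tau,0)\le0$ this gives $0\ge f_\varepsilon(x,u_\varepsilon,Du_\varepsilon)$ a.e.\ there. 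Multiplying by a nonnegative $\varphi$ and integrating,
$$
-\int_{\Omega_{r(\varepsilon)}}\varphi\,\big[\dive\big(A(x,Du_\varepsilon)\big)\big]_{\mathrm{ac}}\,dx\ \ge\ \int_{\Omega_{r(\varepsilon)}}\varphi f_\varepsilon(x,u_\varepsilon,Du_\varepsilon)\,dx+E(\varepsilon)\int_{\{Du_\varepsilon\neq0\}}\varphi\,dx .
$$

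The core step is to show that, for $\varphi\ge0$, passing from this a.c.\ density to the full distributional divergence can only decrease the left-hand side, i.e.\ that $\dive\big(A(x,Du_\varepsilon)\big)\le\big[\dive\big(A(x,Du_\varepsilon)\big)\big]_{\mathrm{ac}}\,\mathcal{L}^n$ as Radon measures. Write $B_m(\xi)=|\xi|^{m-2}\xi$ and $A(x,\xi)=B_p(\xi)+a(x)B_q(\xi)$. The bounded field $B_m(Du_\varepsilon)$ is of bounded variation --- for $1<m<2$, where $B_m$ is only H\"older at the origin, one first localises to the open set $\{|Du_\varepsilon|>t\}$, on which $B_m$ is smooth with bounded derivative, and lets $t\to0$, using $|A(x,Du_\varepsilon)|\le C(t^{p-1}+t^{q-1})$ on $\{|Du_\varepsilon|\le t\}$. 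By the chain rule for $BV$ fields, $\dive\big(B_m(Du_\varepsilon)\big)$ decomposes as the a.c.\ density $\mathrm{tr}\big(DB_m(Du_\varepsilon)\,D^{2,\mathrm{ac}}u_\varepsilon\big)$, with $DB_m(\zeta)=|\zeta|^{m-2}\big(I+(m-2)\tfrac{\zeta}{|\zeta|}\otimes\tfrac{\zeta}{|\zeta|}\big)\ge0$, plus the Cantor contribution $\mathrm{tr}\big(DB_m(Du_\varepsilon)\,(D^2u_\varepsilon)^{\mathrm C}\big)$, plus the jump contribution $[B_m(\xi^+)-B_m(\xi^-)]\cdot\nu\,\mathcal{H}^{n-1}$ on the jump set of $Du_\varepsilon$. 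Since $u_\varepsilon$ is semiconcave, $(D^2u_\varepsilon)^{\mathrm C}\le0$, so the Cantor contribution is $\le0$ (trace of the product of a positive and a negative semidefinite matrix); and since $Du_\varepsilon$ is curl-free its jump is normal to the jump set, $\xi^+-\xi^-=c\,\nu$ with $c\le0$ again by semiconcavity, whence the jump contribution equals $c\int_0^1\langle\nu,DB_m(\xi^-+tc\nu)\nu\rangle\,dt\le0$. Hence $\dive\big(B_m(Du_\varepsilon)\big)$ is, up to an $L^1$ density, a nonpositive measure; as $a(x)\ge0$ and the Leibniz term $\langle Da(x),B_q(Du_\varepsilon)\rangle$ is bounded, the same holds for $\dive\big(A(x,Du_\varepsilon)\big)=\dive\big(B_p(Du_\varepsilon)\big)+a(x)\dive\big(B_q(Du_\varepsilon)\big)+\langle Da(x),B_q(Du_\varepsilon)\rangle$. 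The identity $\int_{\Omega_{r(\varepsilon)}}\langle A(x,Du_\varepsilon),D\varphi\rangle\,dx=-\int_{\Omega_{r(\varepsilon)}}\varphi\,d\big(\dive A(x,Du_\varepsilon)\big)$ then yields, for $\varphi\ge0$,
$$
\int_{\Omega_{r(\varepsilon)}}\langle A(x,Du_\varepsilon),D\varphi\rangle\,dx\ \ge\ -\int_{\Omega_{r(\varepsilon)}}\varphi\,\big[\dive\big(A(x,Du_\varepsilon)\big)\big]_{\mathrm{ac}}\,dx,
$$
and chaining with the previous display proves the assertion for $\varphi\in C_0^\infty(\Omega_{r(\varepsilon)})$, $\varphi\ge0$. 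Since $Du_\varepsilon\in L^\infty_{\mathrm{loc}}$, we have $A(x,Du_\varepsilon)\in L^{H^*(\cdot)}_{\mathrm{loc}}$ and, by \eqref{0-2}, $f_\varepsilon(x,u_\varepsilon,Du_\varepsilon)\in L^\infty_{\mathrm{loc}}$; approximating a nonnegative $\varphi\in W^{1,H(\cdot)}_0(\Omega_{r(\varepsilon)})$ by nonnegative functions in $C_0^\infty$ in the $W^{1,H(\cdot)}$-norm and using the H\"older inequality \eqref{2-1-2} extends the inequality to all admissible $\varphi$.

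I expect the main obstacle to be exactly this passage from the a.e.\ (Alexandrov) inequality to the distributional one: one must know that $\dive\big(A(x,Du_\varepsilon)\big)$ is a measure and, crucially, that its Cantor and jump parts are nonpositive. The three ingredients that make this work are the negativity of the singular part of $D^2u_\varepsilon$ (semiconcavity), the normality of the jumps of the curl-free field $Du_\varepsilon$, and the positive semidefiniteness of $|\xi|^{p-2}\big(I+(p-2)\tfrac{\xi}{|\xi|}\otimes\tfrac{\xi}{|\xi|}\big)$ and $a(x)|\xi|^{q-2}\big(I+(q-2)\tfrac{\xi}{|\xi|}\otimes\tfrac{\xi}{|\xi|}\big)$; the regime $1<p<2$ additionally forces the preliminary truncation to $\{|Du_\varepsilon|>t\}$. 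By contrast, the analysis on $\{Du_\varepsilon=0\}$ is soft, but it is precisely there that the extra assumption $f(x,\tau,0)\le0$ is genuinely used.
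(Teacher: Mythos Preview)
Your route is genuinely different from the paper's. The paper does \emph{not} work with the fine structure of the distributional Hessian. Instead it mollifies the concave part of $u_\varepsilon$, writing $u_{\varepsilon,j}=h_j+C|x|^2$ with $h_j$ smooth and concave, simultaneously mollifies $a$ to $a_j$, and in the singular regimes replaces $|Du_{\varepsilon,j}|^{m-2}$ by $(|Du_{\varepsilon,j}|^2+\delta)^{(m-2)/2}$. For these smooth objects one integrates by parts classically, then sends $j\to\infty$ and $\delta\to0$ by Fatou's lemma and dominated convergence; the key one-sided bounds come from $D^2u_{\varepsilon,j}\le CI$ (concavity of $h_j$) and, at the level of $u_\varepsilon$, from the sharper inequality $D^2u_\varepsilon\le\frac{s-1}{\varepsilon}|Du_\varepsilon|^{\frac{s-2}{s-1}}I$ of Proposition~\ref{pro2-1}(4), which makes the integrand bounded below even when $p<2$ (here the choice $s>\frac{p}{p-1}$ is essential). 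Only after these limits is Lemma~\ref{lem3-1} invoked pointwise on $\{Du_\varepsilon\neq0\}$, and the assumption $f(x,\tau,0)\le0$ disposes of the complementary set. This argument is elementary in the sense that it never leaves the smooth/calculus setting and needs no BV structure theory.

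Your measure-theoretic argument is appealing and, for $p,q\ge2$, essentially complete: with $B_m\in C^1$ on bounded sets the Ambrosio--Dal Maso chain rule applies, and the sign of the Cantor and jump parts follows cleanly from $(D^2u_\varepsilon)^{\mathrm s}\le0$, the normality of the jump of a gradient, and the monotonicity of $\xi\mapsto|\xi|^{m-2}\xi$. The gap is the singular range $1<m<2$. There $B_m\notin C^1$ at the origin, so neither the assertion that $B_m(Du_\varepsilon)\in BV$ nor the chain-rule formula for its Cantor part is available without further argument; H\"older functions of BV maps are not BV in general, and ``localise to $\{|Du_\varepsilon|>t\}$ and let $t\to0$'' does not by itself produce a uniform total-variation bound or control the possible concentration of $D^cDu_\varepsilon$ on $\{\widetilde{Du_\varepsilon}=0\}$, where $DB_m$ is undefined. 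If you want to keep this approach you will have to either regularise $B_m$ (which brings you back to the paper's $\delta$-trick) or invoke a chain rule valid for merely H\"older $\Phi$ together with the specific rank-one/sign structure of $D^2u_\varepsilon$; as written, this step is a genuine hole. The paper's mollification scheme is precisely designed to sidestep it.
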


\begin{proof}
It suffices to consider the nonnegative function $\varphi\in C^\infty_0(\Omega_{r(\varepsilon)})$, because the function space $C^\infty_0(\Omega_{r(\varepsilon)})$ is dense in $W^{1,H(\cdot)}_0(\Omega_{r(\varepsilon)})$.

Owing to $u_\varepsilon$ being semi-concave, we can see by Proposition \ref{pro2-1} that
$$
h(x):=u_\varepsilon(x)-C(s,\varepsilon,u)|x|^2
$$
is concave in $\Omega_{r(\varepsilon)}$. Let $\{h_j\}_j$ be a sequence of smooth concave functions, obtained from standard mollification, such that
\begin{equation*}
(h_j,Dh_j,D^2h_j)\rightarrow(h,Dh,D^2h) \quad \text{a.e. in } \Omega_{r(\varepsilon)}.
\end{equation*}
Additionally, we define
$$
u_{\varepsilon,j}=h_j+C(s,\varepsilon,u)|x|^2
$$
and let $\delta\in(0,1)$. In this proof, the condition $0\le a(x)\in C^{0,1}(\Omega)$ is sufficient except applying Lemma \ref{lem3-1}. Now denote the standard mollification of $a$ as $a_j$.

\textbf{Case 1.} $1<p\le q<2$. In this singular case, we first regularize the equation by adding a small $\delta>0$ as follows, and eventually pass to the limit as $\delta\rightarrow0$. Recalling that $u_{\varepsilon,j}$ and $a_j$ are smooth, we can calculate by integration by parts
\begin{align}
\label{3-5}
&\quad\int_{\Omega_{r(\varepsilon)}}-\varphi\dive\left[(|Du_{\varepsilon,j}|^2+\delta)^\frac{p-2}{2}Du_{\varepsilon,j}+a_j(x)(|Du_{\varepsilon,j}|^2+\delta)^\frac{q-2}{2}
Du_{\varepsilon,j}\right]\,dx \nonumber\\
&=\int_{\Omega_{r(\varepsilon)}}\left\langle(|Du_{\varepsilon,j}|^2+\delta)^\frac{p-2}{2}Du_{\varepsilon,j}+a_j(x)(|Du_{\varepsilon,j}|^2+\delta)^\frac{q-2}{2}
Du_{\varepsilon,j},D\varphi\right\rangle\,dx.
\end{align}
We are ready to consider the limit as $j\rightarrow\infty$, and claim that
\begin{align}
\label{3-6}
&\quad-\int_{\Omega_{r(\varepsilon)}}\varphi\dive\left[(|Du_{\varepsilon}|^2+\delta)^\frac{p-2}{2}Du_{\varepsilon}+a(x)(|Du_{\varepsilon}|^2+\delta)^\frac{q-2}{2}
Du_{\varepsilon}\right]\,dx \nonumber\\
&\le\int_{\Omega_{r(\varepsilon)}}\left\langle(|Du_{\varepsilon}|^2+\delta)^\frac{p-2}{2}Du_{\varepsilon}+a(x)(|Du_{\varepsilon}|^2+\delta)^\frac{q-2}{2}
Du_{\varepsilon},D\varphi\right\rangle\,dx.
\end{align}
It follows, from the Lipschitz continuity of $u_\varepsilon$ and $a$, that for some constant $M>0$
$$
\|Du_{\varepsilon,j}\|_{L^\infty(\rm supp\,\varphi)}, \ \|a_j\|_{L^\infty(\rm supp\,\varphi)},\ \|Da_j\|_{L^\infty(\rm supp\,\varphi)}\le M,  \quad j=1,2,3,\cdots.
$$
Thus we could apply the Lebesgue dominated convergence theorem to the integral at the right-hand side of \eqref{3-5}. On the other hand, via direct computation,
\begin{align}
\label{3-7}
&\quad-\int_{\Omega_{r(\varepsilon)}}\varphi\dive\left[(|Du_{\varepsilon,j}|^2+\delta)^\frac{p-2}{2}Du_{\varepsilon,j}+a_j(x)(|Du_{\varepsilon,j}|^2+\delta)^\frac{q-2}{2}
Du_{\varepsilon,j}\right]\,dx \nonumber\\
&=-\int_{\Omega_{r(\varepsilon)}}\varphi(|Du_{\varepsilon,j}|^2+\delta)^\frac{p-2}{2}\left(\Delta u_{\varepsilon,j}+\frac{p-2}{|Du_{\varepsilon,j}|^2+\delta}\Delta_\infty u_{\varepsilon,j}\right)\,dx \nonumber\\
&\quad-\int_{\Omega_{r(\varepsilon)}}\varphi a_j(|Du_{\varepsilon,j}|^2+\delta)^\frac{q-2}{2}\left(\Delta u_{\varepsilon,j}+\frac{q-2}{|Du_{\varepsilon,j}|^2+\delta}\Delta_\infty u_{\varepsilon,j}\right)\,dx \nonumber\\
&\quad-\int_{\Omega_{r(\varepsilon)}}\varphi (|Du_{\varepsilon,j}|^2+\delta)^\frac{q-2}{2}Du_{\varepsilon,j}\cdot Da_j\,dx.
\end{align}
Obviously, by the dominated convergence theorem, when $j\rightarrow\infty$,
$$
\int_{\Omega_{r(\varepsilon)}}\varphi (|Du_{\varepsilon,j}|^2+\delta)^\frac{q-2}{2}Du_{\varepsilon,j}\cdot Da_j\,dx\rightarrow
\int_{\Omega_{r(\varepsilon)}}\varphi (|Du_{\varepsilon}|^2+\delta)^\frac{q-2}{2}Du_{\varepsilon}\cdot Da\,dx.
$$
Note that $h_j$ is concave. Then we have $D^2u_{\varepsilon,j}\leq C(s,\varepsilon,u)I$. For $Du_{\varepsilon,j}\neq0$, we arrive at
\begin{align*}
(|Du_{\varepsilon,j}|^2+\delta)^\frac{p-2}{2}\left(\Delta u_{\varepsilon,j}+\frac{p-2}{|Du_{\varepsilon,j}|^2+\delta}\Delta_\infty u_{\varepsilon,j}\right)\le C(s,\varepsilon,u)\delta^\frac{p-2}{2}(2n+p-2)
\end{align*}
and
\begin{align*}
a_j(x)(|Du_{\varepsilon,j}|^2+\delta)^\frac{q-2}{2}\left(\Delta u_{\varepsilon,j}+\frac{q-2}{|Du_{\varepsilon,j}|^2+\delta}\Delta_\infty u_{\varepsilon,j}\right)\le MC(s,\varepsilon,u)\delta^\frac{q-2}{2}(2n+q-2).
\end{align*}
For $Du_{\varepsilon,j}=0$, the case becomes easier. Therefore, we can exploit Fatou's lemma to the display \eqref{3-7}, and further justify \eqref{3-6}.

Next, we shall let $\delta\rightarrow0$ in the inequality \eqref{3-6}. It follows from the dominated convergence theorem that, when $\delta$ goes to 0,
\begin{equation*}
\begin{split}
&\quad\int_{\Omega_{r(\varepsilon)}}\left\langle(|Du_{\varepsilon}|^2+\delta)^\frac{p-2}{2}Du_{\varepsilon}+a(x)(|Du_{\varepsilon}|^2+\delta)^\frac{q-2}{2}
Du_{\varepsilon},D\varphi\right\rangle\,dx\\
&\rightarrow\int_{\Omega_{r(\varepsilon)}}\left\langle|Du_{\varepsilon}|^{p-2}Du_{\varepsilon}+a(x)|Du_{\varepsilon}|^{q-2}
Du_{\varepsilon},D\varphi\right\rangle\,dx.
\end{split}
\end{equation*}
Besides, we show that the integrand at the left-hand side of \eqref{3-6} is bounded from below, which can justify the use of Fatou's lemma. If $Du_\varepsilon=0$, this follows immediately from the inequality (see Proposition \ref{pro2-1})
$$
D^2u_\varepsilon\le \frac{s-1}{\varepsilon}|Du_\varepsilon|^\frac{s-2}{s-1}I.
$$
In other words, since $s>2$, then $D^2u_\varepsilon$ is negative semi-definite when $Du_\varepsilon=0$. If $Du_\varepsilon\neq0$, we will find that
\begin{align*}
&\quad -(|Du_{\varepsilon}|^2+\delta)^\frac{p-2}{2}\left(\Delta u_{\varepsilon}+\frac{p-2}{|Du_{\varepsilon}|^2+\delta}\Delta_\infty u_{\varepsilon}\right)\\
&\ge -\frac{(|Du_{\varepsilon}|^2+\delta)^\frac{p-2}{2}}{|Du_{\varepsilon}|^2+\delta}\frac{s-1}{\varepsilon}\left(|Du_\varepsilon|^{\frac{s-2}{s-1}+2}(n+p-2)
+\delta n|Du_\varepsilon|^\frac{s-2}{s-1}\right)\\
&\ge -|Du_\varepsilon|^{\frac{s-2}{s-1}+p-2}\frac{s-1}{\varepsilon}(2n+p-2)\\
&\ge -\|Du_\varepsilon\|^{\frac{s-2}{s-1}+p-2}_{L^\infty(\Omega_{r(\varepsilon)})}\frac{s-1}{\varepsilon}(2n+p-2),
\end{align*}
where in the last inequality we need to recall the local Lipschitz continuity of $u_\varepsilon$ and $s>\frac{p}{p-1}$. Likewise,
\begin{align*}
&\quad -a(x)(|Du_{\varepsilon}|^2+\delta)^\frac{q-2}{2}\left(\Delta u_{\varepsilon}+\frac{q-2}{|Du_{\varepsilon}|^2+\delta}\Delta_\infty u_{\varepsilon}\right)\\
&\ge -\|a\|_{L^\infty(\Omega_{r(\varepsilon)})}\|Du_\varepsilon\|^{\frac{s-2}{s-1}+q-2}_{L^\infty(\Omega_{r(\varepsilon)})}\frac{s-1}{\varepsilon}(2n+q-2).
\end{align*}
Here we note $s>\frac{p}{p-1}\ge \frac{q}{q-1}$. Moreover, we have
$$
\left|(|Du_{\varepsilon}|^2+\delta)^\frac{q-2}{2}Du_{\varepsilon}\cdot Da\right|\le \|Da\|_{L^\infty(\Omega_{r(\varepsilon)})}\|Du_\varepsilon\|^{q-1}_{L^\infty(\Omega_{r(\varepsilon)})}.
$$

On the other hand, we know that $(Du_\varepsilon(x),D^2u_\varepsilon(x))\in J^{2,-}u_\varepsilon(x)$ for almost every $x\in\Omega_{r(\varepsilon)}$. Then by means of Lemma \ref{lem3-1} we deduce
\begin{equation}
\label{3-8}
F(x,Du_\varepsilon(x),D^2u_\varepsilon(x))\ge f_\varepsilon(x,u_\varepsilon(x),Du_\varepsilon(x))+E(\varepsilon)
\end{equation}
in $\{x\in\Omega_{r(\varepsilon)}:Du_\varepsilon\neq0\}$. As a consequence, exploiting Fatou's lemma along with observing that $D^2u_\varepsilon$ is negative semi-definite when $Du_\varepsilon=0$, we derive from \eqref{3-6} that
\begin{align}
\label{3-9}
&\quad \int_{\Omega_{r(\varepsilon)}}\left\langle|Du_{\varepsilon}|^{p-2}Du_{\varepsilon}+a(x)|Du_{\varepsilon}|^{q-2}
Du_{\varepsilon},D\varphi\right\rangle\,dx \nonumber\\
&\ge \liminf_{\delta\rightarrow0}\left[\int_{\{Du_\varepsilon\neq0\}}+\int_{\{Du_\varepsilon=0\}}-\varphi
(|Du_{\varepsilon}|^2+\delta)^\frac{p-2}{2}\left(\Delta u_{\varepsilon}+\frac{p-2}{|Du_{\varepsilon}|^2+\delta}\Delta_\infty u_{\varepsilon}\right)\,dx\right] \nonumber\\
&\quad+\liminf_{\delta\rightarrow0}\left[\int_{\{Du_\varepsilon\neq0\}}+\int_{\{Du_\varepsilon=0\}}-\varphi a(x)
(|Du_{\varepsilon}|^2+\delta)^\frac{q-2}{2}\left(\Delta u_{\varepsilon}+\frac{q-2}{|Du_{\varepsilon}|^2+\delta}\Delta_\infty u_{\varepsilon}\right)\,dx\right] \nonumber\\
&\quad+\liminf_{\delta\rightarrow0}\left[\int_{\{Du_\varepsilon\neq0\}}+\int_{\{Du_\varepsilon=0\}}-\varphi
(|Du_{\varepsilon}|^2+\delta)^\frac{q-2}{2}Du_{\varepsilon}\cdot Da\,dx\right] \nonumber\\
&\ge \int_{\{Du_\varepsilon\neq0\}}-\varphi
|Du_{\varepsilon}|^{p-2}\left(\Delta u_{\varepsilon}+\frac{p-2}{|Du_{\varepsilon}|^2}\Delta_\infty u_{\varepsilon}\right)\,dx  \nonumber\\
&\quad+\int_{\{Du_\varepsilon\neq0\}}-\varphi a(x)
|Du_{\varepsilon}|^{q-2}\left(\Delta u_{\varepsilon}+\frac{q-2}{|Du_{\varepsilon}|^2}\Delta_\infty u_{\varepsilon}\right)\,dx  \nonumber\\
&\quad+\int_{\{Du_\varepsilon\neq0\}}-\varphi
|Du_{\varepsilon}|^{q-2}Du_\varepsilon\cdot Da\,dx  \nonumber\\
&=\int_{\{Du_\varepsilon\neq0\}}\varphi F(x,Du_\varepsilon,D^2u_\varepsilon)\,dx  \nonumber\\
&\ge \int_{\{Du_\varepsilon\neq0\}}\varphi f_\varepsilon(x,u_\varepsilon,Du_\varepsilon)\,dx+E(\varepsilon)\int_{\{Du_\varepsilon\neq0\}}\varphi\,dx \nonumber\\
&\ge\int_{\Omega_{r(\varepsilon)}}\varphi f_\varepsilon(x,u_\varepsilon,Du_\varepsilon)\,dx+E(\varepsilon)\int_{\{Du_\varepsilon\neq0\}}\varphi\,dx,
\end{align}
where in the penultimate line we employed the inequality \eqref{3-8}, and in the last line we used the assumption that $f(x,\tau,0)\le0$.

\textbf{Case 2.} $1<p<2\leq q$. We need to substitute the identity \eqref{3-5} with
\begin{align*}
&\quad\int_{\Omega_{r(\varepsilon)}}-\varphi\dive\left[(|Du_{\varepsilon,j}|^2+\delta)^\frac{p-2}{2}Du_{\varepsilon,j}+a_j(x)|Du_{\varepsilon,j}|^{q-2}
Du_{\varepsilon,j}\right]\,dx \nonumber\\
&=\int_{\Omega_{r(\varepsilon)}}\left\langle(|Du_{\varepsilon,j}|^2+\delta)^\frac{p-2}{2}Du_{\varepsilon,j}+a_j(x)|Du_{\varepsilon,j}|^{q-2}
Du_{\varepsilon,j},D\varphi\right\rangle\,dx,
\end{align*}
since the $q$-growth term does not have singularity so that it is not necessarily regularized as the $p$-growth term. Then the subsequent processes are analogous to Case 1.

\textbf{Case 3.} $2\le p\leq q$. In this non-singular scenario, we replace the display \eqref{3-5} by
\begin{align*}
&\quad\int_{\Omega_{r(\varepsilon)}}-\varphi\dive\left[|Du_{\varepsilon,j}|^{p-2}Du_{\varepsilon,j}+a_j(x)|Du_{\varepsilon,j}|^{q-2}
Du_{\varepsilon,j}\right]\,dx \nonumber\\
&=\int_{\Omega_{r(\varepsilon)}}\left\langle|Du_{\varepsilon,j}|^{p-2}Du_{\varepsilon,j}+a_j(x)|Du_{\varepsilon,j}|^{q-2}
Du_{\varepsilon,j},D\varphi\right\rangle\,dx.
\end{align*}
The other procedures are also similar to Case 1, even more straightforward due to the absence of $\delta$. All in all, we finally deduce the desired result.
\end{proof}

In the previous lemma, in order to get the inequality \eqref{3-9}, $f(x,\tau,0)$ is {\em a priori} assumed to be non-positive. The forthcoming lemma states this hypotheses exactly can be realized when $Du_\varepsilon=0$.

\begin{lemma}
\label{lem3-3}
Let $u$ be a bounded viscosity supersolution to \eqref{main} in $\Omega$. Let also $0\le a(x)\in C^1(\Omega)$ and the function $f(x,\tau,\xi)$ be continuous in all variables. Whenever $Du_\varepsilon(\hat{x})=0$ for some $\hat{x}\in \Omega_{r(\varepsilon)}$, we have
$$ f_\varepsilon(\hat{x},u_\varepsilon(\hat{x}),Du_\varepsilon(\hat{x}))\le 0.
$$
\end{lemma}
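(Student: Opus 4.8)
The claim asserts that at a point $\hat x\in\Omega_{r(\varepsilon)}$ where $Du_\varepsilon(\hat x)=0$, the quantity $f_\varepsilon(\hat x,u_\varepsilon(\hat x),0)=\inf_{y\in B_{r(\varepsilon)}(\hat x)}f(y,u_\varepsilon(\hat x),0)$ is non-positive. Since $f$ is continuous, it suffices to exhibit a single point $y\in \overline{B_{r(\varepsilon)}(\hat x)}$ with $f(y,u_\varepsilon(\hat x),0)\le 0$; the natural candidate is $y=\hat x_\varepsilon$, the point realizing the inf-convolution, i.e. $u_\varepsilon(\hat x)=u(\hat x_\varepsilon)+\frac{|\hat x-\hat x_\varepsilon|^s}{s\varepsilon^{s-1}}$ with $\hat x_\varepsilon\in B_{r(\varepsilon)}(\hat x)$. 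The plan is therefore to test the viscosity supersolution property of $u$ at $\hat x_\varepsilon$ against an appropriate test function and read off $F(\hat x_\varepsilon,0,X)\ge f(\hat x_\varepsilon,u(\hat x_\varepsilon),0)$; since $F(\cdot,0,\cdot)=0$ by the structure \eqref{double} (all of $F_1,F_2,F_3$ vanish when $\xi=0$), this gives $f(\hat x_\varepsilon,u(\hat x_\varepsilon),0)\le 0$, and then the monotonicity of $f$ in $\tau$ together with $u(\hat x_\varepsilon)\le u_\varepsilon(\hat x)$ will upgrade this to $f(\hat x_\varepsilon,u_\varepsilon(\hat x),0)\le 0$, whence the infimum over $B_{r(\varepsilon)}(\hat x)$ is also $\le 0$.

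The technical heart is producing a valid subjet for $u$ at $\hat x_\varepsilon$ with vanishing gradient. First I would observe that $Du_\varepsilon(\hat x)=0$ forces $\hat x_\varepsilon=\hat x$: indeed, whenever $u_\varepsilon$ is differentiable at $\hat x$ one has $Du_\varepsilon(\hat x)=\frac{|\hat x-\hat x_\varepsilon|^{s-2}(\hat x-\hat x_\varepsilon)}{\varepsilon^{s-1}}$ (this is the gradient formula recorded in Proposition \ref{pro2-1}(4), and $u_\varepsilon$ is twice differentiable a.e. by semi-concavity), so $Du_\varepsilon(\hat x)=0$ gives $\hat x_\varepsilon=\hat x$ and hence $u_\varepsilon(\hat x)=u(\hat x)$. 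Thus $\hat x\in B_{r(\varepsilon)}(\hat x)$ is itself the relevant point and it suffices to show $f(\hat x,u(\hat x),0)\le 0$. To see this, pick any $(\eta,X)\in J^{2,-}u_\varepsilon(\hat x)$; since $u_\varepsilon\le u$ and $u_\varepsilon(\hat x)=u(\hat x)$, the same paraboloid touches $u$ from below at $\hat x$, so $(\eta,X)\in J^{2,-}u(\hat x)$. Because $Du_\varepsilon(\hat x)=0$ and $u_\varepsilon$ is semi-concave (hence differentiable at a.e. point, and at $\hat x$ every subjet gradient must equal $Du_\varepsilon(\hat x)$ when the derivative exists; if $\hat x$ is a non-differentiability point one can still argue via Proposition \ref{pro2-1}(4) that any subjet gradient is $\frac{|\hat x-\hat x_\varepsilon|^{s-2}(\hat x-\hat x_\varepsilon)}{\varepsilon^{s-1}}=0$), every $(\eta,X)\in J^{2,-}u_\varepsilon(\hat x)$ has $\eta=0$.

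Here is the one genuine obstacle: the definition of viscosity supersolution (Definition \ref{def2-3}) only tests subjets with $\eta\neq 0$, so a subjet with $\eta=0$ gives no information directly. The standard remedy — and the step I expect to require the most care — is to perturb. For $\theta>0$ consider the shifted test function $y\mapsto \varphi_\theta(y):=\varphi(y)+\theta\,e\cdot(y-\hat x)$ where $(\eta,X)=(0,X)\in J^{2,-}u_\varepsilon(\hat x)$ comes from some $\varphi\in C^2$ touching $u_\varepsilon$ (equivalently $u$) from below at $\hat x$, and $e$ is a fixed unit vector; then $D\varphi_\theta(\hat x)=\theta e\neq 0$, but $\varphi_\theta$ no longer touches from below. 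One fixes this by the usual trick: $\varphi_\theta$ attains a local minimum relative to $u-\varphi_\theta$ at some nearby point $x_\theta\to\hat x$, producing $(\theta e + o(1), D^2\varphi(x_\theta))\in J^{2,-}u(x_\theta)$ with nonzero gradient; applying the supersolution inequality there gives $F(x_\theta,\theta e+o(1),D^2\varphi(x_\theta))\ge f(x_\theta,u(x_\theta),\theta e+o(1))$. Now let $\theta\to 0$: the left side tends to $F(\hat x,0,D^2\varphi(\hat x))=0$ by continuity of $F$ away from $\xi=0$ combined with the fact that $F_1,F_2$ extend continuously by $0$ at $\xi=0$ when $p\le q$ (here one uses $|\xi|^{p-2}|X|\,|\xi|\to 0$, legitimate since $p>1$), while the right side tends to $f(\hat x,u(\hat x),0)$ by continuity of $f$ and lower semicontinuity considerations giving $u(x_\theta)\to u(\hat x)$ (using that $\hat x$ is a touching point so $u(x_\theta)\ge\varphi_\theta(x_\theta)\to u(\hat x)$, together with lower semicontinuity of $u$). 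This yields $f(\hat x,u(\hat x),0)\le 0$, i.e. $f(\hat x,u_\varepsilon(\hat x),0)\le 0$, and therefore $f_\varepsilon(\hat x,u_\varepsilon(\hat x),0)=\inf_{y\in B_{r(\varepsilon)}(\hat x)}f(y,u_\varepsilon(\hat x),0)\le 0$, completing the proof.
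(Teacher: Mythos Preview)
Your overall architecture is right: once you observe that $Du_\varepsilon(\hat x)=0$ forces $\hat x_\varepsilon=\hat x$ and hence $u_\varepsilon(\hat x)=u(\hat x)$, the whole lemma reduces to showing $f(\hat x,u(\hat x),0)\le 0$. The difficulty is exactly the one you flag, namely that the supersolution definition gives no information when the test gradient vanishes. Your proposed cure, however, does not work in the singular range $1<p<2$.

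The error is in the sentence ``$F_1,F_2$ extend continuously by $0$ at $\xi=0$ \dots here one uses $|\xi|^{p-2}|X|\,|\xi|\to 0$''. In fact
\[
F_1(\xi,X)=-|\xi|^{p-2}\Bigl(\mathrm{tr}\,X+(p-2)\Bigl\langle X\tfrac{\xi}{|\xi|},\tfrac{\xi}{|\xi|}\Bigr\rangle\Bigr)
\]
is of order $|\xi|^{p-2}\|X\|$, not $|\xi|^{p-1}\|X\|$. For $p<2$ and a fixed $X\neq 0$ this blows up as $\xi\to 0$. Your linear perturbation $\varphi_\theta=\varphi+\theta\,e\cdot(y-\hat x)$ leaves $D^2\varphi_\theta\equiv D^2\varphi$ fixed, so along the sequence $x_\theta\to\hat x$ the left-hand side $F(x_\theta,D\varphi_\theta(x_\theta),D^2\varphi(x_\theta))$ need not converge to $0$; the limit step collapses.

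The paper's proof avoids this by not perturbing at all, but instead using a \emph{specific} test function whose Hessian vanishes at the right rate. Since $u_\varepsilon(\hat x)=u(\hat x)$, the very definition of the inf-convolution gives $u(y)\ge u(\hat x)-\tfrac{|y-\hat x|^s}{s\varepsilon^{s-1}}=:\phi(y)$ with equality at $\hat x$, so $\phi$ touches $u$ from below. For this $\phi$ one computes $|D\phi(y)|\sim|y-\hat x|^{s-1}$ and $\|D^2\phi(y)\|\sim|y-\hat x|^{s-2}$, hence
\[
|D\phi(y)|^{p-2}\|D^2\phi(y)\|\sim|y-\hat x|^{(s-1)(p-1)-1}\longrightarrow 0,
\]
precisely because $s$ was chosen with $s>\tfrac{p}{p-1}$ (and $s>\tfrac{p}{p-1}\ge\tfrac{q}{q-1}$ handles the $q$-growth term). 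Thus the full operator $F(y,D\phi(y),D^2\phi(y))$ extends continuously to $0$ at $\hat x$, and the viscosity inequality survives in the limit to give $0\ge f(\hat x,u(\hat x),0)$. The choice of the exponent $s$ in Section~\ref{sec-2} is tailored to make exactly this computation go through.

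A minor remark: your opening plan invokes monotonicity of $f$ in $\tau$, which is not among the hypotheses of Lemma~\ref{lem3-3}; you are saved only because $\hat x_\varepsilon=\hat x$ makes that step unnecessary.
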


\begin{proof}
We have known by \cite[Lemma 4.3]{JJ12} that if $Du_\varepsilon(\hat{x})=0$, then
$$
u_\varepsilon(\hat{x})=u(\hat{x}).
$$
From the definition of inf-convolution,
$$
u(\hat{x})\le u(y)+\frac{|\hat{x}-y|^s}{s\varepsilon^{s-1}} \quad\text{for all } y\in\Omega.
$$
Now introduce an auxiliary function
$$
\phi(y)=u(\hat{x})-\frac{|\hat{x}-y|^s}{s\varepsilon^{s-1}} \quad\text{with } y\in\Omega,
$$
where $s=2$ if $p\ge2$, and $s>\max\left\{\frac{p}{p-1},\frac{q}{q-1}\right\}=\frac{p}{p-1}$ if $1<p<2$. We can apparently see that
$$
\phi\in C^2(\Omega), \ D\phi(\hat{x})=0, \ D\phi(y)\neq0 \quad  \text{for } y\neq \hat{x},
$$
and $\phi$ touches $u$ from below at $\hat{x}$. For the case $s>2$ (i.e., $1<p<2$), we next evaluate some important quantities,
$$
D\phi=\varepsilon^{1-s}|\hat{x}-y|^{s-2}(\hat{x}-y),\ D^2\phi=-\varepsilon^{1-s}|\hat{x}-y|^{s-2}\left(I+(s-2)\frac{\hat{x}-y}{|\hat{x}-y|}\otimes\frac{\hat{x}-y}{|\hat{x}-y|}\right)
$$
and
$$
\mathrm{tr}D^2\phi=-(n+s-2)\varepsilon^{1-s}|\hat{x}-y|^{s-2}, \ \left\langle\frac{D\phi}{|D\phi|},D^2\phi\frac{D\phi}{|D\phi|}\right\rangle=-(s-1)\varepsilon^{1-s}|\hat{x}-y|^{s-2}.
$$
Combining these quantities leads to
\begin{align*}
&\quad\dive(|D\phi|^{p-2}D\phi+a(y)|D\phi|^{p-2}D\phi)\\
&=\Delta_p\phi+a(y)\Delta_q\phi+|D\phi|^{p-2}D\phi\cdot Da(y)\\
&=-(n+(p-1)(s-1)-1)\varepsilon^{(1-s)(p-1)}|\hat{x}-y|^{(s-1)(p-1)-1}\\
&\quad-a(y)(n+(q-1)(s-1)-1)\varepsilon^{(1-s)(q-1)}|\hat{x}-y|^{(s-1)(q-1)-1}\\
&\quad+\varepsilon^{(1-s)(q-1)}|\hat{x}-y|^{(s-1)(q-1)-1}(\hat{x}-y)\cdot Da(y).
\end{align*}
Observe that $s>\max\left\{\frac{p}{p-1},\frac{q}{q-1}\right\}$, which implies that
$$
(s-1)(p-1)-1>0 \quad \textmd{and} \quad  (s-1)(q-1)-1>0.
$$
Thereby,
$$
\lim_{r\rightarrow0}\sup_{y\in B_r(\hat{x})\setminus\{\hat{x}\}}(-\dive(|D\phi|^{p-2}D\phi+a(y)|D\phi|^{p-2}D\phi))=0.
$$
As for $s=2$ (i.e., $p\ge 2$), the previous identity is valid obviously. Because $u$ is a viscosity supersolution of \eqref{main}, there holds that
$$
\lim_{r\rightarrow0}\sup_{y\in B_r(\hat{x})\setminus\{\hat{x}\}}(-\dive(|D\phi|^{p-2}D\phi+a(y)|D\phi|^{p-2}D\phi))\ge
f(\hat{x},u(\hat{x}),D\phi(\hat{x})).
$$
Recalling that $u(\hat{x})=u_\varepsilon(\hat{x})$, $Du_\varepsilon(\hat{x})=0=D\phi(\hat{x})$, we obtain, from the above two displays,
$$
0\ge f(\hat{x},u_\varepsilon(\hat{x}),Du_\varepsilon(\hat{x}))\ge f_\varepsilon(\hat{x},u_\varepsilon(\hat{x}),Du_\varepsilon(\hat{x})),
$$
as desired.
\end{proof}

Next, we show the convergence of inf-convolution $u_\varepsilon$ in the Musielak-Orlicz-Sobolev space $W^{1,H(\cdot)}(\Omega)$ in the following two lemmas.

\begin{lemma}
\label{lem3-4}
Under \eqref{0-2} and the preconditions of Lemma \ref{lem3-1}, we infer that the function $u\in W^{1,H(\cdot)}_{\rm loc}(\Omega)$ and, up to a subsequence, $Du_\varepsilon\rightarrow Du$ weakly in $L^{H(\cdot)}(\Omega')$ for every $\Omega'\subset\subset\Omega$.
\end{lemma}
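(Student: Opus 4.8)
\emph{Approach.} The strategy is the classical one for passing a uniform-convergence family into a Sobolev space: produce a uniform energy bound for $Du_\varepsilon$ on compact subdomains, invoke reflexivity of $L^{H(\cdot)}(\Omega')$ to extract a weakly convergent subsequence, and then identify the limit as $Du$ using the locally uniform convergence $u_\varepsilon\to u$ from Proposition \ref{pro2-1}(1). The only real content is the uniform energy estimate, which we get by testing the inequality in Lemma \ref{lem3-2} against a suitable cutoff times $u_\varepsilon$ (or rather against a compactly supported nonnegative function built from $u_\varepsilon$ and a cutoff), exploiting the structure condition \eqref{0-2} on $f$ together with the Young inequality \eqref{2-0-3} to absorb the gradient terms coming from $f_\varepsilon$ into the left-hand side.

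\emph{Key steps.} First I would fix $\Omega'\subset\subset\Omega''\subset\subset\Omega$ and a cutoff $\zeta\in C_0^\infty(\Omega'')$ with $0\le\zeta\le1$ and $\zeta\equiv1$ on $\Omega'$; for $\varepsilon$ small enough, $\mathrm{supp}\,\zeta\subset\Omega_{r(\varepsilon)}$. Using $\varphi=\zeta^q(u_\varepsilon-\inf_{\Omega''}u_\varepsilon+1)\ge0$ (bounded uniformly in $\varepsilon$ by Proposition \ref{pro2-1}(1) and boundedness of $u$) as a test function in Lemma \ref{lem3-2}, and expanding $D\varphi$, the principal part yields a term comparable to $\int\zeta^q H(x,Du_\varepsilon)\,dx$ plus a cross term $\int q\zeta^{q-1}(u_\varepsilon-\dots+1)\langle A(x,Du_\varepsilon),D\zeta\rangle\,dx$ which is handled by Young's inequality \eqref{2-0-3} applied to the pair $(A(x,Du_\varepsilon),D\zeta)$, absorbing a small multiple of $H(x,Du_\varepsilon)$ on the left and leaving $C\int_{\Omega''}H(x,D\zeta)\,dx<\infty$. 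The right-hand side contributions are: the error term $E(\varepsilon)\int_{\{Du_\varepsilon\ne0\}}\varphi\,dx$, which is bounded since $E(\varepsilon)\to0$ and $\varphi$ is bounded; and $\int\varphi f_\varepsilon(x,u_\varepsilon,Du_\varepsilon)\,dx$, which by \eqref{0-2} (note $f_\varepsilon$ inherits the same bound with $\Phi$ replaced by its local sup, since $\gamma_\infty:=\max_{[0,\|u\|_\infty]}\gamma$ controls $\gamma(|u_\varepsilon|)$ uniformly) is dominated by $C\int\zeta^q(|Du_\varepsilon|^{p-1}+a(x)|Du_\varepsilon|^{q-1})\,dx+C\|\Phi\|_{L^\infty(\Omega'')}$; the first integral is again absorbed by Young's inequality \eqref{2-0-3}, costing only another small multiple of $\int\zeta^q H(x,Du_\varepsilon)\,dx$ on the left. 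Collecting terms gives
$$
\int_{\Omega'}H(x,Du_\varepsilon)\,dx\le C,
$$
with $C$ independent of $\varepsilon$. By \eqref{2-1} this bounds $\|Du_\varepsilon\|_{L^{H(\cdot)}(\Omega')}$ uniformly; since $u_\varepsilon\to u$ uniformly on $\Omega'$ we also get $\|u_\varepsilon\|_{L^{H(\cdot)}(\Omega')}$ bounded, so $\{u_\varepsilon\}$ is bounded in the reflexive space $W^{1,H(\cdot)}(\Omega')$. Extracting a weakly convergent subsequence $u_{\varepsilon_j}\rightharpoonup v$ in $W^{1,H(\cdot)}(\Omega')$, the compact embedding (or simply uniform convergence) forces $v=u$ a.e., whence $u\in W^{1,H(\cdot)}(\Omega')$ and $Du_{\varepsilon_j}\rightharpoonup Du$ weakly in $L^{H(\cdot)}(\Omega')$. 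A diagonal argument over an exhaustion of $\Omega$ by compact subdomains upgrades this to $u\in W^{1,H(\cdot)}_{\rm loc}(\Omega)$ and the stated weak convergence for every $\Omega'\subset\subset\Omega$.

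\emph{Main obstacle.} The delicate point is the absorption argument: one must check that the several applications of Young's inequality \eqref{2-0-3}, applied with $H$ and its conjugate $H^*$ evaluated at the \emph{same} point $x$ (so that the $a(x)$-weights match in the $p$- and $q$-parts simultaneously), indeed produce terms of the form $\theta\,H(x,Du_\varepsilon)$ with $\theta$ as small as we like, plus controlled remainders; here the equivalence \eqref{2-0-2} and the fact that $A(x,\xi)\cdot\xi\simeq H(x,\xi)$ are what make the coercivity work. A secondary point is that $\varphi$ must genuinely lie in $W^{1,H(\cdot)}_0(\Omega_{r(\varepsilon)})$, which is why we use a compactly supported cutoff and shift $u_\varepsilon$ by a constant to keep it nonnegative; the uniform bounds on $u_\varepsilon$ and on the geometry of $\mathrm{supp}\,\zeta$ guarantee all constants are $\varepsilon$-independent.
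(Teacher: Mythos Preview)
Your overall plan (uniform energy bound via a Caccioppoli estimate, then reflexivity and identification of the limit) is the same as the paper's, but your choice of test function has a sign error that breaks the argument. You test with $\varphi=\zeta^q(u_\varepsilon-\inf_{\Omega''}u_\varepsilon+1)$, so that $D\varphi=\zeta^q Du_\varepsilon+q\zeta^{q-1}(\cdots)D\zeta$ and the principal contribution $\int\zeta^q\langle A(x,Du_\varepsilon),Du_\varepsilon\rangle=\int\zeta^q H(x,Du_\varepsilon)$ appears on the \emph{right}-hand side of the inequality in Lemma~\ref{lem3-2} with a \emph{plus} sign. But Lemma~\ref{lem3-2} is a \emph{super}solution inequality: it only gives a \emph{lower} bound on $\int\langle A(x,Du_\varepsilon),D\varphi\rangle$. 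Your computation therefore yields
\[
\int\zeta^q H(x,Du_\varepsilon)\,dx\;\ge\;-(\text{cross term})+\int\varphi f_\varepsilon\,dx+E(\varepsilon)\int_{\{Du_\varepsilon\neq0\}}\varphi\,dx,
\]
a lower bound on the energy, which says nothing about uniform boundedness. (Your test function would be the right one for a \emph{sub}solution inequality.)

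The fix is the standard one for Caccioppoli estimates of supersolutions, and it is exactly what the paper does: test instead with $\varphi=(K-u_\varepsilon)\psi^q\ge0$, where $K:=\sup_{\varepsilon;\,x\in\Omega'}|u_\varepsilon(x)|$ is finite by Proposition~\ref{pro2-1}(1). Then $D\varphi=-\psi^q Du_\varepsilon+q\psi^{q-1}(K-u_\varepsilon)D\psi$, the principal term comes with a minus sign on the right of the supersolution inequality, and moving it across gives the desired upper bound
\[
\int\psi^q H(x,Du_\varepsilon)\,dx\le I_1+I_2+I_3,
\]
with $I_1$ the cross term, $I_2$ the $f_\varepsilon$-term controlled by \eqref{0-2}, and $I_3$ the $E(\varepsilon)$-term. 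From there your Young-inequality absorptions and the reflexivity/identification steps go through exactly as you describe.
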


\begin{proof}
Take a cut-off function $\psi\in C^\infty_0(\Omega)$ fulfilling $\psi\equiv1$ in $\Omega'$ and $0\le\psi\le1$ in $\Omega$. Set $\varepsilon$ so small that $\mathrm{supp}\,\psi=:E\subset\Omega_{r(\varepsilon)}$. Notice that $u_\varepsilon\rightarrow u$ locally uniformly in $\Omega$, so we can define a test function
$$
\varphi:=(K-u_\varepsilon)\psi^q(\ge0)
$$
with $K:=\sup_{\varepsilon;\, x\in \Omega'}|u_\varepsilon(x)|$ (finite). Hence, utilizing Lemma \ref{lem3-2},
\begin{align*}
&\quad \int_{\Omega_{r(\varepsilon)}}\varphi f_\varepsilon(x,u_\varepsilon,Du_\varepsilon)\,dx+E(\varepsilon)\int_{\Omega_{r(\varepsilon)}\setminus\{Du_\varepsilon=0\}}\varphi\,dx\\
&\leq
\int_{\Omega_{r(\varepsilon)}}\langle |Du_\varepsilon|^{p-2}Du_\varepsilon+a(x)|Du_\varepsilon|^{q-2}Du_\varepsilon,D\varphi\rangle\,dx\\
&=\int_{\Omega_{r(\varepsilon)}}q\psi^{q-1}(K-u_\varepsilon)\langle |Du_\varepsilon|^{p-2}Du_\varepsilon+a(x)|Du_\varepsilon|^{q-2}Du_\varepsilon,D\psi\rangle\,dx\\
&\quad-\int_{\Omega_{r(\varepsilon)}}\psi^q H(x,Du_\varepsilon)\,dx.
\end{align*}
Namely,
\begin{align*}
\int_{\Omega_{r(\varepsilon)}}\psi^q H(x,Du_\varepsilon)\,dx&\leq q\int_{\Omega_{r(\varepsilon)}}\psi^{q-1}(K-u_\varepsilon) (|Du_\varepsilon|^{p-1}+a(x)|Du_\varepsilon|^{q-1})|D\psi|\,dx\\
&\quad+\int_{\Omega_{r(\varepsilon)}}\varphi |f_\varepsilon(x,u_\varepsilon,Du_\varepsilon)|\,dx+|E(\varepsilon)|\int_{\Omega_{r(\varepsilon)}}\varphi\,dx\\
&=:I_1+I_2+I_3.
\end{align*}
Noting $q-1\ge \frac{q(p-1)}{p}$, $0\le \psi\le1$ and applying Young's inequality with $\epsilon$, we have
\begin{equation*}
\begin{split}
I_1&\le q\int_{\Omega_{r(\varepsilon)}}(K-u_\varepsilon)|D\psi|\psi^\frac{q(p-1)}{p}|Du_\varepsilon|^{p-1}+a(x)(K-u_\varepsilon)|D\psi|\psi^{q-1}|Du_\varepsilon|^{q-1}\,dx\\
&\le \epsilon\int_{\Omega_{r(\varepsilon)}}\psi^qH(x,Du_\varepsilon)\,dx+C(q,\epsilon)\int_{\Omega_{r(\varepsilon)}}H(x,(K-u_\varepsilon)|D\psi|)\,dx.
\end{split}
\end{equation*}
In view of the growth condition on $f$, we can deal with $I_2$ as
\begin{align*}
I_2&\le \int_{\Omega_{r(\varepsilon)}}(K-u_\varepsilon)\psi^q \gamma_\infty(|Du_\varepsilon|^{p-1}+a(x)|Du_\varepsilon|^{q-1})+(K-u_\varepsilon)\psi^q\Phi\,dx\\
&\le\epsilon\int_{\Omega_{r(\varepsilon)}}\psi^qH(x,Du_\varepsilon)\,dx+C(\gamma_\infty,\epsilon)\int_{\Omega_{r(\varepsilon)}}\psi^qH(x,K-u_\varepsilon)\,dx
+C(K,\|\Phi\|_{L^\infty(E)},E).
\end{align*}
Here $\gamma_\infty:=\max_{\tau\in[0,K]}\gamma(\tau)$. For $I_3$, we have
$$
I_3\le C(K,E),
$$
where we assume $|E(\varepsilon)|\le 1$ without loss of generality. Choosing proper $\epsilon\in(0,1)$ and merging these above estimates yields that
$$
\int_{\Omega'}H(x,Du_\varepsilon)\,dx\le \int_{\Omega_{r(\varepsilon)}}\psi^qH(x,Du_\varepsilon)\,dx\le C(p,q,a,K,\gamma,\Phi,D\psi,E).
$$
This indicates that $Du_\varepsilon$ is uniformly bounded in $L^{H(\cdot)}(\Omega')$ with respect to $\varepsilon$, which further deduces that there exists a function $Du\in L^{H(\cdot)}(\Omega')$ such that $Du_\varepsilon\rightarrow Du$ weakly in $L^{H(\cdot)}(\Omega')$ up to a subsequence owing to $L^{H(\cdot)}(\Omega')$ being a reflexible Banach space. Finally, $u$ belongs to $W^{1,H(\cdot)}(\Omega')$ with $Du$ as its weak derivative.
\end{proof}

\begin{lemma}
\label{lem3-5}
With \eqref{0-2} and the hypotheses of Lemma \ref{lem3-1}, we arrive at $u_\varepsilon\rightarrow u$ as $\varepsilon\rightarrow0$, up to a subsequence, in $W^{1,H(\cdot)}(\Omega')$ for each $\Omega'\subset\subset\Omega$.
\end{lemma}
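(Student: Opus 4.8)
The plan is to upgrade the weak convergence $Du_\varepsilon\rightharpoonup Du$ in $L^{H(\cdot)}(\Omega')$ supplied by Lemma~\ref{lem3-4} to strong convergence, exploiting the monotonicity of the map $\xi\mapsto A(x,\xi)=|\xi|^{p-2}\xi+a(x)|\xi|^{q-2}\xi$. Since $u_\varepsilon\to u$ locally uniformly by Proposition~\ref{pro2-1}(1) and $|\Omega'|<\infty$, one gets $\|u_\varepsilon-u\|_{L^{H(\cdot)}(\Omega')}\to0$ from \eqref{2-1} at once, so everything reduces to showing
$$
\int_{\Omega'}\big\langle A(x,Du_\varepsilon)-A(x,Du),\,Du_\varepsilon-Du\big\rangle\,dx\longrightarrow0\qquad(\varepsilon\to0).
$$

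To this end I would fix $\Omega'\subset\subset\Omega''\subset\subset\Omega$ and a cut-off $\psi\in C^\infty_0(\Omega'')$ with $\psi\equiv1$ on $\Omega'$ and $0\le\psi\le1$, take $\varepsilon$ small enough that $\mathrm{supp}\,\psi\subset\Omega_{r(\varepsilon)}$, and use the bound $u_\varepsilon\le u$ of Proposition~\ref{pro2-1}(1) together with $u\in W^{1,H(\cdot)}_{\rm loc}(\Omega)$ (Lemma~\ref{lem3-4}) to make $\varphi:=(u-u_\varepsilon)\psi^q\ge0$ an admissible test function in $W^{1,H(\cdot)}_0(\Omega_{r(\varepsilon)})$; this ordering is the only place it is needed. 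Feeding $\varphi$ into the weak inequality of Lemma~\ref{lem3-2} and expanding $D\varphi=\psi^q(Du-Du_\varepsilon)+q\psi^{q-1}(u-u_\varepsilon)D\psi$, I would rearrange into
\begin{align*}
\int_{\Omega''}\psi^q\big\langle A(x,Du_\varepsilon),Du_\varepsilon-Du\big\rangle\,dx
&\le \int_{\Omega''}q\psi^{q-1}(u-u_\varepsilon)\big\langle A(x,Du_\varepsilon),D\psi\big\rangle\,dx\\
&\quad-\int_{\Omega''}\varphi\, f_\varepsilon(x,u_\varepsilon,Du_\varepsilon)\,dx-E(\varepsilon)\int_{\{Du_\varepsilon\neq0\}}\varphi\,dx .
\end{align*}
Each term on the right tends to $0$: by H\"older's inequality and the uniform modular bound of Lemma~\ref{lem3-4}, the functions $|A(x,Du_\varepsilon)|\le|Du_\varepsilon|^{p-1}+a(x)|Du_\varepsilon|^{q-1}$ and, by the growth condition \eqref{0-2}, $|f_\varepsilon(x,u_\varepsilon,Du_\varepsilon)|$ are bounded in $L^1(\Omega'')$ uniformly in $\varepsilon$ (using that $a$ is bounded away from $0$ on $\overline{\Omega''}$, by the hypothesis $a>0$); meanwhile $u-u_\varepsilon\to0$ uniformly on $\mathrm{supp}\,\psi$, and $E(\varepsilon)\to0$ with $0\le\int_{\{Du_\varepsilon\neq0\}}\varphi\le\int\varphi\to0$. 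Hence $\limsup_{\varepsilon\to0}\int_{\Omega''}\psi^q\langle A(x,Du_\varepsilon),Du_\varepsilon-Du\rangle\,dx\le0$. Since $\psi^qA(x,Du)\in L^{H^*(\cdot)}(\Omega'')$ and $Du_\varepsilon\rightharpoonup Du$ in $L^{H(\cdot)}(\Omega'')$, also $\int_{\Omega''}\psi^q\langle A(x,Du),Du_\varepsilon-Du\rangle\,dx\to0$; subtracting, and using that the integrand $\psi^q\langle A(x,Du_\varepsilon)-A(x,Du),Du_\varepsilon-Du\rangle\ge0$ pointwise with $\psi\equiv1$ on $\Omega'$, gives the displayed limit.

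It remains to convert this vanishing ``energy'' into convergence in $L^{H(\cdot)}(\Omega')$ by means of the classical vector inequalities $\langle|\xi|^{m-2}\xi-|\eta|^{m-2}\eta,\xi-\eta\rangle\ge c_m|\xi-\eta|^m$ for $m\ge2$ and $\langle|\xi|^{m-2}\xi-|\eta|^{m-2}\eta,\xi-\eta\rangle\ge c_m|\xi-\eta|^2(|\xi|+|\eta|)^{m-2}$ for $1<m<2$, applied with $m=p$ and, after multiplying by $a(x)\ge0$, with $m=q$. When $p,q\ge2$ this bounds $\varrho_H(Du_\varepsilon-Du;\Omega')=\int_{\Omega'}\big(|Du_\varepsilon-Du|^p+a(x)|Du_\varepsilon-Du|^q\big)\,dx$ directly by the energy; in the singular ranges $1<p<2$ (resp. $1<q<2$) one first writes, say, $|Du_\varepsilon-Du|^p=\big(|Du_\varepsilon-Du|^2(|Du_\varepsilon|+|Du|)^{p-2}\big)^{p/2}(|Du_\varepsilon|+|Du|)^{p(2-p)/2}$ and applies H\"older's inequality with exponents $2/p$ and $2/(2-p)$ (and the analogous splitting carrying the weight $a(x)$ for the $q$-term), the first factor being controlled by the energy and the second by the uniform $L^{H(\cdot)}(\Omega'')$ bound from Lemma~\ref{lem3-4}. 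In all cases $\varrho_H(Du_\varepsilon-Du;\Omega')\to0$, hence $\|Du_\varepsilon-Du\|_{L^{H(\cdot)}(\Omega')}\to0$ by \eqref{2-1}, and together with $\|u_\varepsilon-u\|_{L^{H(\cdot)}(\Omega')}\to0$ this yields $u_\varepsilon\to u$ in $W^{1,H(\cdot)}(\Omega')$ along a subsequence.

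The step I expect to be the main obstacle is precisely this last passage from the degenerate-elliptic energy bound to genuine $L^{H(\cdot)}$-convergence while simultaneously juggling the two exponents $p\le q$ and the modulating weight $a(x)$: one must keep the H\"older splitting robust enough to carry $a(x)$ through the $q$-term — harmless here because $a$ appears with the identical power in the energy and in $\varrho_H$ and is bounded on $\overline{\Omega''}$ — and one must secure the uniform-in-$\varepsilon$ gradient bound on the slightly enlarged set $\Omega''$, not merely on $\Omega'$, so that the cut-off argument closes. By contrast, the lower-order term $f$ and the error $E(\varepsilon)$ cause no real difficulty, since they enter either multiplied by the uniformly small factor $u-u_\varepsilon$ or with the coefficient $E(\varepsilon)\to0$.
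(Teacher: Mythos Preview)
Your proposal is correct and follows essentially the same route as the paper: test Lemma~\ref{lem3-2} with $(u-u_\varepsilon)$ times a cut-off, use the uniform convergence $u_\varepsilon\to u$ and the uniform $L^{H(\cdot)}$-bound from Lemma~\ref{lem3-4} to kill the cross terms, use weak convergence for the $A(x,Du)$-piece, and then pass from the monotonicity quantity to $\varrho_H(Du_\varepsilon-Du)\to0$ via the standard vector inequalities. The only cosmetic differences are that the paper uses $\psi$ rather than $\psi^q$, decomposes the right-hand side into the four pieces $J_1,\dots,J_4$ rather than isolating the $A(x,Du_\varepsilon)$ term first, and outsources your final H\"older-splitting step to \cite[p.~9]{FZ2020} instead of writing it out.
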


\begin{proof}
Take a cut-off function $\psi\in C^\infty_0(\Omega)$ satisfying $\psi\equiv1$ in $\Omega'$ and $0\le\psi\le1$ in $\Omega$. Let $\varepsilon$ so small that $\mathrm{supp}\,\psi=:E\subset\Omega_{r(\varepsilon)}$. Notice that $u_\varepsilon\le u$ in $\Omega$, so we define a test function
$$
\varphi:=(u-u_\varepsilon)\psi(\ge0).
$$
It is easy to find that $\varphi\in W^{1,H(\cdot)}_0(\Omega_{r(\varepsilon)})$. Employing again Lemma \ref{lem3-2} obtains
\begin{align*}
&\quad\int_{\Omega_{r(\varepsilon)}}\langle A(x,Du)-A(x,Du_\varepsilon),D\varphi\rangle\,dx\\
&\le \int_{\Omega_{r(\varepsilon)}}\langle A(x,Du),D\varphi\rangle\,dx+\int_{\Omega_{r(\varepsilon)}}\varphi|f_\varepsilon(x,u_\varepsilon,Du_\varepsilon)|\,dx+|E(\varepsilon)|\int_{\Omega_{r(\varepsilon)}}\varphi\,dx.
\end{align*}
After manipulation, we get
\begin{align*}
&\quad\int_{\Omega_{r(\varepsilon)}}\langle A(x,Du)-A(x,Du_\varepsilon),Du-Du_\varepsilon\rangle\psi\,dx\\
&\le \int_E\psi\langle A(x,Du),Du-Du_\varepsilon\rangle\,dx+\int_E(u-u_\varepsilon)\langle A(x,Du_\varepsilon),D\psi\rangle\,dx\\
&\quad+\int_E\psi(u-u_\varepsilon)|f_\varepsilon(x,u_\varepsilon,Du_\varepsilon)|\,dx+|E(\varepsilon)|\int_E(u-u_\varepsilon)\psi\,dx\\
&=J_1+J_2+J_3+J_4.
\end{align*}
For $J_2$, via the inequalities \eqref{2-0-2} and \eqref{2-0-3}, we derive
\begin{align*}
J_2&\le \|u-u_\varepsilon\|_{L^\infty(E)}\int_E|A(x,Du_\varepsilon)||D\psi|\,dx\\
&\le C\|u-u_\varepsilon\|_{L^\infty(E)}\int_EH^*(x,|A(x,Du_\varepsilon)|)+H(x,|D\psi|)\,dx\\
&\le C\|u-u_\varepsilon\|_{L^\infty(E)}\int_EH(x,|Du_\varepsilon|)+H(x,|D\psi|)\,dx.
\end{align*}
Using the growth assumption on $f$ and Young's inequality leads to
\begin{align*}
J_3&\le \|u-u_\varepsilon\|_{L^\infty(E)}\int_E\gamma_\infty(|Du_\varepsilon|^{p-1}+a(x)|Du_\varepsilon|^{q-1})+\Phi\,dx\\
&\le C\|u-u_\varepsilon\|_{L^\infty(E)}\left[\int_EH(x,|Du_\varepsilon|)\,dx+(1+\|\Phi\|_{L^\infty(E)})|E|\right],
\end{align*}
where $\gamma_\infty$ is the same as that in Lemma \ref{lem3-4}.
As for $J_4$,
$$
J_4\le|E(\varepsilon)|\|u-u_\varepsilon\|_{L^\infty(E)}|E|.
$$
Recalling that $Du_\varepsilon\rightarrow Du$ weakly in $L^{H(\cdot)}(E)$ in Lemma \ref{lem3-4} and $u_\varepsilon\rightarrow u$ locally uniformly in Proposition \ref{pro2-1}, we know $J_1+J_2+J_3+J_4$ tends to 0. In other words,
$$
\lim_{\varepsilon\rightarrow0}\int_{\Omega'}\langle A(x,Du)-A(x,Du_\varepsilon),Du-Du_\varepsilon\rangle\,dx=0.
$$
Following the calculations in \cite[page 9]{FZ2020}, we further arrive at
$$
\lim_{\varepsilon\rightarrow0}\int_{\Omega'}H(x,Du-Du_\varepsilon)\,dx=0,
$$
which implies the desired result.
\end{proof}

Finally, we end this section by verifying that the locally Lipschitz continuous viscosity supersolutions to \eqref{main} are also weak supersolutions, as stated in Theorem \ref{thm1}. We next intend to apply the previous convergence results to pass to the limit in the display of Lemma \ref{lem3-2}.

\bigskip

\noindent\textbf{Proof of Theorem \ref{thm1}.}
Let $\varphi\in C^\infty_0(\Omega)$ be a nonnegative test function and set an open $\Omega'\subset\subset\Omega$ such that $\mathrm{supp}\,\varphi\subset\Omega'$. Fix a sufficiently small $\varepsilon_0>0$ fulfilling $\Omega'\subset\Omega_{r(\varepsilon)}$ for $0<\varepsilon<\varepsilon_0$. Now we want to show
\begin{equation}
\label{3-10}
\int_\Omega\langle |Du|^{p-2}Du+a(x)|Du|^{q-2}Du,D\varphi\rangle\,dx\ge\int_\Omega\varphi f(x,u,Du)\,dx,
\end{equation}
which is the definition of weak supersolution of \eqref{main}. This desired claim shall follow through Lemma \ref{lem3-2}, once the forthcoming displays are justified:
\begin{equation}
\label{3-11}
\begin{split}
&\quad\lim_{\varepsilon\rightarrow0}\int_{\Omega'}\langle |Du_\varepsilon|^{p-2}Du_\varepsilon+a(x)|Du_\varepsilon|^{q-2}Du_\varepsilon,D\varphi\rangle\,dx\\
&=\int_{\Omega'}\langle |Du|^{p-2}Du+a(x)|Du|^{q-2}Du,D\varphi\rangle\,dx,
\end{split}
\end{equation}
\begin{equation}
\label{3-12}
\lim_{\varepsilon\rightarrow0}\int_{\Omega'}\varphi f_\varepsilon(x,u_\varepsilon,Du_\varepsilon)\,dx=\int_{\Omega'}\varphi f(x,u,Du)\,dx
\end{equation}
and
\begin{equation}
\label{3-13}
\lim_{\varepsilon\rightarrow0}E(\varepsilon)\int_{\Omega'}\varphi\,dx=0.
\end{equation}
First, the limit \eqref{3-13} is obviously valid. Next, we demonstrate the validity of \eqref{3-11}. We shall employ the elementary vector inequality (see \cite{Lin17}):
\begin{equation}
\label{3-14}
\big||\xi_1|^{t-2}\xi_1-|\xi_2|^{t-2}\xi_2\big|\leq\begin{cases}(t-1)|\xi_1-\xi_2|(|\xi_1|^{t-2}+|\xi_2|^{t-2}), \quad &\textmd{if }{t\geq2}, \\[2mm]
2^{2-t}|\xi_1-\xi_2|^{t-1}, \quad &\textmd{if }{1<t<2},\end{cases}
\end{equation}
where $\xi_1,\xi_2\in \mathbb{R}^n$. We split the proof of \eqref{3-11} into three cases.

For the case $1<p\le q<2$, we use \eqref{2-0-2}, \eqref{2-1}--\eqref{2-1-2}, the basic inequality \eqref{3-14} to get
\begin{align*}
&\quad\int_{\Omega'}\langle |Du_\varepsilon|^{p-2}Du_\varepsilon-|Du|^{p-2}Du+a(x)(|Du_\varepsilon|^{q-2}Du_\varepsilon-|Du|^{q-2}Du),D\varphi\rangle\,dx\\
&\le\int_{\Omega'}(||Du_\varepsilon|^{p-2}Du_\varepsilon-|Du|^{p-2}Du|+a(x)||Du_\varepsilon|^{q-2}Du_\varepsilon-|Du|^{q-2}Du|)|D\varphi|\,dx\\
&\le C\int_{\Omega'}(|Du_\varepsilon-Du|^{p-1}+a(x)|Du_\varepsilon-Du|^{q-1})|D\varphi|\,dx\\
&\le C\|h(x,Du_\varepsilon-Du)\|_{L^{H^*(\cdot)}(\Omega')}\|D\varphi\|_{L^{H(\cdot)}(\Omega')}\\
&\le C\|D\varphi\|_{L^{H(\cdot)}(\Omega')}\max\left\{\left(\varrho_{H^*}(h(x,Du_\varepsilon-Du))\right)^\frac{q}{q+1},\left(\varrho_{H^*}(h(x,Du_\varepsilon-Du))\right)^\frac{p}{p+1}\right\}\\
&\le C\|D\varphi\|_{L^{H(\cdot)}(\Omega')}\max\left\{\left(\int_{\Omega'}H(x,Du_\varepsilon-Du)\,dx\right)^\frac{q}{q+1},\left(\int_{\Omega'}H(x,Du_\varepsilon-Du)\,dx\right)^\frac{p}{p+1}\right\}\\
&\rightarrow0
\end{align*}
as $\varepsilon\rightarrow0$, by applying $Du_\varepsilon\rightarrow Du$ in $L^{H(\cdot)}(\Omega')$ in Lemma \ref{lem3-5}. Here $h(x,z):=|z|^{p-1}+a(x)|z|^{q-1}$.

When $2\le p\le q$, exploiting again \eqref{2-0-2}, \eqref{2-1}--\eqref{2-1-2} as well as \eqref{3-14}, and applying $Du_\varepsilon\rightarrow Du$ in $L^{H(\cdot)}(\Omega')$, we can see that
\begin{align*}
&\quad\int_{\Omega'}\langle |Du_\varepsilon|^{p-2}Du_\varepsilon-|Du|^{p-2}Du+a(x)(|Du_\varepsilon|^{q-2}Du_\varepsilon-|Du|^{q-2}Du),D\varphi\rangle\,dx\\
&\le C\int_{\Omega'}\left[(|Du_\varepsilon|^{p-2}+|Du|^{p-2})+a(x)(|Du_\varepsilon|^{q-2}+|Du|^{q-2})\right]|Du_\varepsilon-Du||D\varphi|\,dx\\
&\le C\|D\varphi\|_{L^\infty(\Omega')}\int_{\Omega'}\left[(1+|Du_\varepsilon|^{p-1}+|Du|^{p-1})+a(x)(1+|Du_\varepsilon|^{q-1}+|Du|^{q-1})\right]\\
&\qquad\qquad\qquad\qquad\qquad\times|Du_\varepsilon-Du|\,dx\\
&=C\|D\varphi\|_{L^\infty(\Omega')}\int_{\Omega'}[(1+a(x))+h(x,Du_\varepsilon)+h(x,Du)]|Du_\varepsilon-Du|\,dx\\
&\le C\|Du_\varepsilon-Du\|_{L^{H(\cdot)}(\Omega')}+C\|h(x,Du)\|_{L^{H^*(\cdot)}(\Omega')}\|Du_\varepsilon-Du\|_{L^{H(\cdot)}(\Omega')}\\
&\quad+C\|h(x,Du_\varepsilon)\|_{L^{H^*(\cdot)}(\Omega')}\|Du_\varepsilon-Du\|_{L^{H(\cdot)}(\Omega')}\\
&\rightarrow0
\end{align*}
as $\varepsilon\rightarrow0$, where the constant $C$ depends on $p,q,\|a\|_{L^\infty(\Omega')},\|D\varphi\|_{L^\infty(\Omega')},|\Omega|$. Here we need to notice that the quantity $\int_{\Omega'}H(x,Du_\varepsilon)\,dx$ is uniformly bounded.

In the last case $1<p<2\le q$, we combine the previous two scenarios to deduce the claim \eqref{3-11}. Specifically,
\begin{align*}
&\quad\int_{\Omega'}\langle |Du_\varepsilon|^{p-2}Du_\varepsilon-|Du|^{p-2}Du+a(x)(|Du_\varepsilon|^{q-2}Du_\varepsilon-|Du|^{q-2}Du),D\varphi\rangle\,dx\\
&\le C\int_{\Omega'}\left[|Du_\varepsilon-Du|^{p-1}+a(x)(|Du_\varepsilon|^{q-2}+|Du|^{q-2})|Du_\varepsilon-Du|\right]|D\varphi|\,dx\\
&\le C\int_{\Omega'}(|Du_\varepsilon-Du|^{p-1}+a(x)|Du_\varepsilon-Du|^{q-1})|D\varphi|\,dx\\
&\quad+C\int_{\Omega'} a(x)(1+|Du_\varepsilon|^{q-1}+|Du|^{q-1})|Du_\varepsilon-Du||D\varphi|\,dx\\
&\le C\int_{\Omega'}h(x,Du_\varepsilon-Du)|D\varphi|\,dx+C\int_{\Omega'}|Du_\varepsilon-Du|\,dx\\
&\quad+ C\int_{\Omega'}h(x,Du_\varepsilon)|Du_\varepsilon-Du|\,dx+C\int_{\Omega'}h(x,Du)|Du_\varepsilon-Du|\,dx,
\end{align*}
which tends to 0 as $\varepsilon\rightarrow0$, where $C$ depends upon $p,q,\|a\|_{L^\infty(\Omega')},\|D\varphi\|_{L^\infty(\Omega')},|\Omega'|$.

Eventually, let us prove the claim \eqref{3-12}. Via the uniform continuity of $f$, for each $\epsilon>0$, there is a $\delta>0$ depending only on $\epsilon$ such that
$$
|f(x,u_\varepsilon(x),Du_\varepsilon(x))-f(y,u_\varepsilon(x),Du_\varepsilon(x))|\le\epsilon \quad \text{for } y\in B_\delta(x).
$$
Now select $\varepsilon'_0>0$ to satisfy $r(\varepsilon)<\delta$ if $0<\varepsilon<\varepsilon'_0$. Then we have
$$
f(x,u_\varepsilon(x),Du_\varepsilon(x))<\epsilon+f(y,u_\varepsilon(x),Du_\varepsilon(x))
$$
for each $x\in \Omega'$ and $y\in B_{r(\varepsilon)}(x)$. In particular,
$$
f(x,u_\varepsilon(x),Du_\varepsilon(x))<\epsilon+f_\varepsilon(x,u_\varepsilon(x),Du_\varepsilon(x))
$$
by the definition of $f_\varepsilon$, and moreover
$$
0\le |f(x,u_\varepsilon(x),Du_\varepsilon(x))-f_\varepsilon(x,u_\varepsilon(x),Du_\varepsilon(x))|\le\epsilon.
$$
We thus have
$$
\int_{\Omega'}|f(x,u_\varepsilon,Du_\varepsilon)-f_\varepsilon(x,u_\varepsilon,Du_\varepsilon)|\varphi\,dx\leq\epsilon\|\varphi\|_{L^\infty(\Omega')}|\Omega|.
$$
In view of (1) in Proposition \ref{pro2-1}, it is known that $\|u_\varepsilon\|_{L^\infty(\Omega')}\le\|u\|_{L^\infty(\Omega')}$ for any $\varepsilon$. Namely,
$$
\max_{[0,\|u_\varepsilon\|_{L^\infty(\Omega')}]}\{\gamma(\tau)\}\le\max_{[0,\|u\|_{L^\infty(\Omega')}]}\{\gamma(\tau)\}=:\gamma_\infty.
$$
According to the growth condition on $f$,
$$
|f(x,u_\varepsilon,Du)|\le\gamma_\infty(|Du|^{p-1}+a(x)|Du|^{q-1})+\Phi(x) \quad\text{in } \Omega',
$$
which belongs to $L^{H^*(\cdot)}(\Omega')$. Thereby we can exploit the dominated convergence theorem to infer
$$
\lim_{\varepsilon\rightarrow0}\int_{\Omega'}|f(x,u_\varepsilon,Du)-f(x,u,Du)|\varphi\,dx=0.
$$
We finally address the term $\int_{\Omega'}|f(x,u_\varepsilon,Du_\varepsilon)-f(x,u_\varepsilon,Du)|\varphi\,dx$,
\begin{align*}
&\quad\int_{\Omega'}|f(x,u_\varepsilon,Du_\varepsilon)-f(x,u_\varepsilon,Du)|\varphi\,dx\\
&\le C\int_{\Omega'}|Du_\varepsilon-Du|\varphi\,dx\\
&\le C\|\varphi\|_{L^{H^*(\cdot)}(\Omega')}\|Du_\varepsilon-Du\|_{L^{H(\cdot)}}(\Omega')\\
&\rightarrow0 \quad \text{as } \varepsilon\rightarrow0,
\end{align*}
where we have utilized the Lipschitz continuity of $f$ in the third variable, and the convergence $Du_\varepsilon\rightarrow Du$ in $L^{H(\cdot)}(\Omega')$.

Merging these above estimates yields that
\begin{align*}
&\quad\int_{\Omega'}|f_\varepsilon(x,u_\varepsilon,Du_\varepsilon)-f(x,u,Du)|\varphi\,dx\\
&\le \int_{\Omega'}|f_\varepsilon(x,u_\varepsilon,Du_\varepsilon)-f(x,u_\varepsilon,Du_\varepsilon)|\varphi\,dx+\int_{\Omega'}|f(x,u_\varepsilon,Du_\varepsilon)-f(x,u_\varepsilon,Du)|\varphi\,dx\\
&\quad+\int_{\Omega'}|f(x,u_\varepsilon,Du)-f(x,u,Du)|\varphi\,dx
\end{align*}
converges to 0 by sending $\varepsilon\rightarrow0$. Hereto, we have verified the displays \eqref{3-11}--\eqref{3-13}, from which we get the inequality \eqref{3-10}.

\section{weak solutions are viscosity solutions}
\label{sec-4}

In this section, we prove that weak supersolutions are viscosity supersolutions to \eqref{main}, that is Theorem \ref{thm2}, by using comparison principle for weak solutions, subsequently giving two examples of comparison results. Then weak subsolutions can be showed to be viscosity subsolutions in a similar way.

\bigskip

\noindent\textbf{Proof of Theorem \ref{thm2}.}
We argue by contradiction. If not, there exists a $\varphi\in C^2(\Omega)$ touching $u$ from below at $x_0\in\Omega$, that is,
\begin{equation*}
\begin{cases}
u(x_0)=\varphi(x_0),\\
u(x)>\varphi(x) \quad\textmd{for }  x\neq x_0,\\
D\varphi(x_0)\neq 0,
\end{cases}
\end{equation*}
and however,
$$
-\dive A(x_0,D\varphi(x_0))<f(x_0,u(x_0),D\varphi(x_0)).
$$
By means of continuity, for some $\delta>0$ there is a small enough $r>0$ such that
\begin{equation}
\label{4-1}
-\dive A(x,D\varphi(x))\le f(x,\varphi,D\varphi)-\delta
\end{equation}
for $x\in B_r:=B_r(x_0)$. Denote $\widetilde{\varphi}:=\varphi+m$ with $m>0$ to be chosen later. In view of \eqref{4-1}, we have
\begin{align*}
&\quad-\dive A(x,D\widetilde{\varphi})-f(x,\widetilde{\varphi},D\widetilde{\varphi})\\
&=-\dive A(x,D\varphi)-f(x,\varphi,D\varphi)+f(x,\varphi,D\varphi)-f(x,\widetilde{\varphi},D\widetilde{\varphi})\\
&\le -\delta+f(x,\varphi,D\varphi)-f(x,\varphi+m,D\varphi)\\
&\le -\frac{\delta}{2},
\end{align*}
if $m>0$ is sufficiently small. Indeed, we can pick $m\in \left(0,\frac{1}{2}\min_{\partial B_r}\{u-\varphi\}\right)$ (note the lower semicontinuity of $u$) small, taking into account the uniform continuity of $f$, such that
$$
|f(x,\varphi,D\varphi)-f(x,\varphi+m,D\varphi)|\le \frac{\delta}{2}.
$$
Hence, $\widetilde{\varphi}$ is a weak subsolution to \eqref{main} in $B_r$ as well. Observe that $\widetilde{\varphi}=\varphi+m<\varphi+u-\varphi=u$ on $\partial B_r$. Thereby through the (CPP) we get $u\ge \widetilde{\varphi}$ in $B_r$. Nonetheless, $u(x_0)=\varphi(x_0)<\varphi(x_0)+m$, which is a contradiction. Then $u$ is a viscosity supersolution to \eqref{main}.

\medskip

A fundamental issue in Theorem \ref{thm2} is the availability of comparison principle for weak solutions. Nevertheless, it is not undemanding to establish such principle for the nonhomogeneous double phase equations with very general structure. Hence, we next for two slightly special cases prove comparison principle for weak solutions.

\begin{lemma}
\label{lem4-1}
Assume that $u,v$ are the weak subsolution and supersolution, respectively, to $-\dive A(x,Dw)=f(x,w)$ in $\Omega$, where $f$ is decreasing in the $w$-variable. If $u\le v$ on $\partial \Omega$, then $u\le v$ a.e. in $\Omega$.
\end{lemma}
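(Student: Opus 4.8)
The plan is to use the standard Browder--Minty-type argument built on the monotonicity of the operator $\xi\mapsto A(x,\xi)=|\xi|^{p-2}\xi+a(x)|\xi|^{q-2}\xi$ together with the decreasing behaviour of $f$ in the last slot. First I would set $w:=(u-v)^{+}$. Since $u\le v$ on $\partial\Omega$ (in the $W^{1,H(\cdot)}$-trace sense), $w\in W^{1,H(\cdot)}_0(\Omega)$, and $w\ge0$, so it is an admissible test function in the weak formulations of both $u$ and $v$. Subtracting the two inequalities --- using $w$ as test function in the subsolution inequality for $u$ (read with $\le$) and in the supersolution inequality for $v$ (read with $\ge$) --- I obtain
\begin{equation*}
\int_{\Omega}\langle A(x,Du)-A(x,Dv),D w\rangle\,dx\le \int_{\Omega}\bigl(f(x,u)-f(x,v)\bigr)w\,dx.
\end{equation*}
On the set $\{u>v\}$ we have $Dw=Du-Dv$, while on $\{u\le v\}$ we have $Dw=0$; hence the left-hand side equals $\int_{\{u>v\}}\langle A(x,Du)-A(x,Dv),Du-Dv\rangle\,dx$.

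The next step is the two sign observations. By the strict monotonicity of $A(x,\cdot)$ --- which follows from the monotonicity of $\xi\mapsto|\xi|^{t-2}\xi$ for each $t>1$ and from $a(x)\ge0$ --- the integrand on the left is nonnegative, and is strictly positive wherever $Du\neq Dv$. For the right-hand side: on $\{u>v\}$ we have $w=u-v>0$, and since $f(x,\cdot)$ is decreasing, $f(x,u)-f(x,v)\le0$; thus the right-hand side is $\le0$. Combining,
\begin{equation*}
0\le\int_{\{u>v\}}\langle A(x,Du)-A(x,Dv),Du-Dv\rangle\,dx\le\int_{\{u>v\}}\bigl(f(x,u)-f(x,v)\bigr)w\,dx\le0,
\end{equation*}
so every inequality is an equality. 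From equality of the leftmost and middle terms and the strict monotonicity of $A(x,\cdot)$, we conclude $Du=Dv$ a.e.\ on $\{u>v\}$, hence $Dw=0$ a.e.\ in $\Omega$. Since $w\in W^{1,H(\cdot)}_0(\Omega)$ and $Dw=0$, Poincaré's inequality (or the elementary fact that a $W^{1,1}_0$ function with vanishing gradient is zero) gives $w\equiv0$, i.e.\ $u\le v$ a.e.\ in $\Omega$.

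I expect the only delicate point to be purely a matter of care rather than depth: justifying that $(u-v)^{+}\in W^{1,H(\cdot)}_0(\Omega)$ and is a legitimate test function. This requires that $u-v$ have the right boundary behaviour (the hypothesis $u\le v$ on $\partial\Omega$ must be interpreted as $(u-v)^{+}\in W^{1,H(\cdot)}_0(\Omega)$), that truncations operate within $W^{1,H(\cdot)}$ --- which holds because $H$ satisfies the $(\Delta_2)$ and $(\nabla_2)$ conditions, so the space is well behaved under the standard lattice operations and $C^\infty_0(\Omega)$ is dense in $W^{1,H(\cdot)}_0(\Omega)$ by \eqref{2-0} --- and that the integrals involving $f(x,u)w$ and $f(x,v)w$ are finite; the latter is not needed in full generality since we only use that the right-hand side has the correct sign. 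One should also note that if one wants strict monotonicity to force $Du=Dv$ in the singular range $1<p<2$, the inequality $\langle A(x,\xi)-A(x,\eta),\xi-\eta\rangle>0$ for $\xi\neq\eta$ still holds (with a non-uniform modulus), which is all that is required here. No comparison principle for viscosity solutions and no regularity beyond $W^{1,H(\cdot)}_{\mathrm{loc}}$ enters the argument.
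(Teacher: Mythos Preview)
Your proof is correct and follows essentially the same monotonicity argument as the paper. The only cosmetic difference is that the paper tests with $(u-v-l)_+$ for $l>0$ and then lets $l\to 0$, whereas you take $(u-v)^+$ directly; both variants are standard and equivalent here.
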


\begin{proof}
Since $u,v$ are separately weak subsolution and supersolution such that $u\le v$ on $\partial\Omega$, the auxiliary function
$$
w:=(u-v-l)_+, \ l>0,
$$
can be chosen as a test function, which belongs to the space $W^{1,H(\cdot)}_0(\Omega)$. Therefore, we get
$$
\int_\Omega\langle A(x,Du)-A(x,Dv),Dw\rangle\,dx\le\int_\Omega(f(x,u)-f(x,v))w\,dx.
$$
We use the fact that $f$ is decreasing with respect to the second variable to arrive at
$$
\int_\Omega(f(x,u)-f(x,v))(u-v-l)_+\,dx\le0.
$$
Furthermore, due to the strictly monotone increasing property of the operator $A(x,\cdot)$, i.e., $\langle A(x,\xi)-A(x,\zeta),\xi-\zeta\rangle>0$ for $\xi\neq\zeta\in \mathbb{R}^n$, it follows that
$$
0\le \int_{\Omega\cap\{(u-v-l)_+>0\}}\langle A(x,Du)-A(x,Dv),Du-Dv\rangle\,dx\le0.
$$
This implies that $w\equiv0$ a.e. in $\Omega$. In other words, $u\le v+l$ a.e. in $\Omega$. Letting $l\rightarrow0$, we can see that $u\le v$ a.e. in $\Omega$.
\end{proof}

\begin{lemma}
\label{lem4-2}
Suppose that $f(x,\tau,\eta)$ is decreasing in $\tau$, and is locally Lipschitz continuous with respect to $\eta$ in $\Omega\times\mathbb{R}\times\mathbb{R}^n$. Let $u,v$ be the weak subsolution and supersolution respectively to \eqref{main} such that
$$
|Du(x)|+|Dv(x)|\ge \delta \quad \text{a.e.} \ x\in\Omega
$$
with $\delta>0$ any number. Let also $2\le p\le q$. Then there is $\varepsilon>0$ such that, for every domain $E\subset\subset\Omega$ fulfilling $|E|\le\varepsilon$, whenever $u\le v$ on $\partial E$ then it holds that $u\le v$ a.e. in $E$.
\end{lemma}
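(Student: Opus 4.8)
The plan is to adapt the argument of Lemma~\ref{lem4-1}: test the difference of the two weak formulations with (a truncation of) $(u-v)_+$, use strict monotonicity of $A(x,\cdot)$ on the left and the structure of $f$ on the right. The genuinely new feature, compared with Lemma~\ref{lem4-1}, is that $f$ now depends on the gradient, so testing produces an extra term proportional to $|Du-Dv|$; this can only be reabsorbed into the ellipticity of the operator if the operator is uniformly elliptic on the gradient range that actually occurs (which is why we need $p\ge2$ together with $|Du|+|Dv|\ge\delta$) and the domain $E$ is small. The role of the smallness of $|E|$ is to make the relevant Poincar\'e constant beat the Lipschitz constant of $f$ in $\eta$.

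First I would fix $l>0$ and take $w:=(u-v-l)_+$. Since $u,v\in W^{1,H(\cdot)}(E)$ and $u\le v$ on $\partial E$, this is an admissible nonnegative test function; moreover $w\in W^{1,2}_0(E)$, because $p\ge2$ and $E$ is bounded, so that $Du,Dv\in L^p(E)\subset L^2(E)$. Subtracting the weak formulations for $u$ and $v$ gives
\[
\int_E\langle A(x,Du)-A(x,Dv),Dw\rangle\,dx\le\int_E\big(f(x,u,Du)-f(x,v,Dv)\big)w\,dx.
\]
On $\{u-v>l\}=\{Dw\ne0\}$ I would split $f(x,u,Du)-f(x,v,Dv)=[f(x,u,Du)-f(x,v,Du)]+[f(x,v,Du)-f(x,v,Dv)]$: the first bracket is $\le0$ there since $f$ is decreasing in $\tau$ and $u>v$, while the second is bounded by $L|Du-Dv|$, with $L$ the local Lipschitz constant of $f$ in $\eta$ on the (compact) set of values attained by $(x,u,Du)$ and $(x,v,Dv)$. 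For the left-hand side, since $p\ge2$ and $|Du|+|Dv|\ge\delta$ a.e., the standard monotonicity inequality for the $p$-Laplacian operator gives
\[
\langle A(x,Du)-A(x,Dv),Du-Dv\rangle\ge c_p\,(|Du|+|Dv|)^{p-2}|Du-Dv|^2\ge c_p\,\delta^{p-2}\,|Du-Dv|^2,
\]
the $a(x)|\cdot|^{q-2}(\cdot)$ contribution being monotone and nonnegative, hence discardable. Combining these, and using $Dw=(Du-Dv)\mathbf 1_{\{u-v>l\}}$, yields $c_p\,\delta^{p-2}\int_E|Dw|^2\,dx\le L\int_E|Dw|\,w\,dx$.

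Next I would absorb the right-hand side by Young's inequality, $L|Dw|w\le\frac{1}{2}c_p\delta^{p-2}|Dw|^2+\frac{L^2}{2c_p\delta^{p-2}}w^2$, obtaining $\frac{1}{2}c_p\delta^{p-2}\|Dw\|_{L^2(E)}^2\le\frac{L^2}{2c_p\delta^{p-2}}\|w\|_{L^2(E)}^2$. Since $w\in W^{1,2}_0(E)$, the Poincar\'e (Faber--Krahn) inequality applies with constant of order $|E|^{1/n}$, i.e.\ $\|w\|_{L^2(E)}^2\le C_n|E|^{2/n}\|Dw\|_{L^2(E)}^2$, so that
\[
\frac{1}{2}c_p\delta^{p-2}\|Dw\|_{L^2(E)}^2\le\frac{L^2C_n|E|^{2/n}}{2c_p\delta^{p-2}}\|Dw\|_{L^2(E)}^2.
\]
Hence, choosing $\varepsilon=\varepsilon(n,p,\delta,L)>0$ small enough that $L^2C_n\varepsilon^{2/n}<c_p^2\delta^{2(p-2)}$, the inequality forces $\|Dw\|_{L^2(E)}=0$ whenever $|E|\le\varepsilon$, whence $w\equiv0$ in $E$ (again by Poincar\'e), i.e.\ $u\le v+l$ a.e.\ in $E$; letting $l\to0^+$ gives $u\le v$ a.e.\ in $E$.

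The main obstacle is exactly the absorption step, and it is what pins down both structural hypotheses of the lemma. The gradient dependence of $f$ contributes a term quantitatively comparable to $\|Dw\|_{L^2}$, and it can be reabsorbed into the left-hand side only because the operator is uniformly elliptic on $\{|Du|+|Dv|\ge\delta\}$ with ellipticity constant $\sim\delta^{p-2}$; this in turn needs $p\ge2$, since for $1<p<2$ the monotonicity bound degenerates in the wrong direction (one would then need an \emph{upper} bound on $|Du|+|Dv|$, not a lower one). A secondary but necessary point is to ensure the Lipschitz constant $L$ is finite, which is where the (implicit, and in the application to the equivalence theorem available) boundedness and Lipschitz regularity of $u$ and $v$ enter; with $u,v\in W^{1,\infty}_{\mathrm{loc}}$ the relevant arguments of $f$ lie in a fixed compact set and $L$ is well defined.
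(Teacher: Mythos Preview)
Your argument is correct and follows essentially the same route as the paper's: test with $(u-v)_+$ (the paper omits the $l$-shift), split $f$ via monotonicity in $\tau$ and Lipschitz continuity in $\eta$, and absorb the resulting $|Du-Dv|$ term using $(|Du|+|Dv|)^{p-2}\ge\delta^{p-2}$ together with Poincar\'e on the small domain $E$. The only cosmetic differences are that the paper applies H\"older--Poincar\'e directly to $\int_E|Dw|\,w\,dx$ rather than your Young--Poincar\'e combination, and that your closing remark on the implicit need for a uniform Lipschitz constant $L$ is more careful than the paper, which simply writes a global constant $C$ without comment.
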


\begin{proof}
Selecting $w=(u-v)_+\chi_E\in W^{1,H(\cdot)}_0(\Omega)$ as a test function in the weak formulation of \eqref{main}, we derive, from the hypotheses on $f$,
\begin{align}
\label{4-2}
&\quad\int_\Omega\langle A(x,Du)-A(x,Dv),Dw\rangle\,dx \nonumber\\
&\le\int_\Omega(f(x,u,Du)-f(x,v,Dv))w\,dx \nonumber\\
&=\int_\Omega(f(x,u,Du)-f(x,v,Du))(u-v)_+\chi_E+(f(x,v,Du)-f(x,v,Dv))(u-v)_+\chi_E\,dx  \nonumber\\
&\le \int_\Omega(f(x,v,Du)-f(x,v,Dv))(u-v)_+\chi_E\,dx  \nonumber\\
&\le C\int_E|Du-Dv|(u-v)_+\,dx.
\end{align}
Now by the basic inequality
$$
C(|\xi|+|\eta|)^{p-2}|\xi-\eta|^2\le (|\xi|^{p-2}\xi-|\eta|^{p-2}\eta)\cdot(\xi-\eta) \quad\text{for } p\ge2,
$$
\eqref{4-2} turns into
\begin{align*}
&\quad\int_E(|Du|+|Dv|)^{p-2}|D(u-v)_+|^2+a(x)(|Du|+|Dv|)^{q-2}|D(u-v)_+|^2\,dx\\
&\leq C\int_E|Du-Dv|(u-v)_+\,dx\\
&\le C(E)\int_E|D(u-v)_+|^2\,dx\\
&\le C(E)\int_E(|Du|+|Dv|)^{2-p}(|Du|+|Dv|)^{p-2}|D(u-v)_+|^2\,dx\\
&\le \delta^{2-p}C(E)\int_E(|Du|+|Dv|)^{p-2}|D(u-v)_+|^2\,dx,
\end{align*}
where in the third line we utilized the H\"{o}lder and Poincar\'{e} inequalities, and the quantity $C(E)$ will tend to 0 when the measure $|E|$ goes to 0. From above, we can find that if the domain $E$ is small enough, the last inequality is self-contradictory. That is, $(u-v)_+\equiv0$ in $E$, which leads to $u\le v$ a.e. in $E$.
\end{proof}

\begin{remark}
Lemma \ref{lem4-1} could be exploited in the proof of Theorem \ref{thm2} apparently. Observe that $D\varphi(x_0)\neq0$ and $\varphi\in C^2(\Omega)$, so we can take such small ball $B_r(x_0)$ that $|B_r(x_0)|\le \varepsilon$ ($\varepsilon$ is provided by Lemma \ref{lem4-2}) and moreover $|D\varphi(x)|\ge\delta>0$ in $B_r(x_0)$. Thus we can keep track of the  proof of Theorem \ref{thm2} to deduce that a lower semicontinuous weak supersolution is  a viscosity supersolution to \eqref{main} by  Lemma \ref{lem4-2}.
\end{remark}

\section{Lipschitz continuity of viscosity solutions}
\label{sec-5}

We in this part show that the bounded viscosity solutions to \eqref{main} are locally Lipschitz continuous. The strategy is to verify first the H\"{o}lder continuity of viscosity solutions by using the Ishii-Lions methods, and further, based on the H\"{o}lder continuity, to demonstrate the Lipschitz continuity of viscosity solutions through the Ishii-Lions methods again. The similar idea can be found for instance in \cite{AR18}. For the sake of convenience, we suppose the domain $\Omega$ is a unit ball $B_1$.

\begin{lemma}[Local H\"{o}lder continuity]
\label{lem5-1}
Let $u$ be a bounded viscosity solution to \eqref{main} in $B_1$. Assume that $0\le a(x)\in C^1(B_1)$, $p\le q\le p+1$ and \eqref{0-2} are in force. Then for each $\beta\in(0,1)$, there exists a constant $C>0$, depending on $n,p,q,\beta,\gamma_\infty,\|a\|_{C^1(B_1)},\|u\|_{L^\infty(B_1)}$ and $\|\Phi\|_{L^\infty(B_1)}$, such that
$$
|u(x)-u(y)|\le C|x-y|^\beta
$$
for any $x,y\in B_{3/4}$, where $\gamma_\infty:=\max_{t\in[0,\|u\|_{L^\infty(B_1)}]} \gamma(t)$.
\end{lemma}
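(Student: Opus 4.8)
The plan is to follow the classical Ishii–Lions doubling-of-variables scheme. Fix $x_0 \in B_{3/4}$ and work in a ball $B_{R}(x_0)$ with $R$ small, comparable to the distance to $\partial B_{7/8}$. For $\beta \in (0,1)$ and large parameters $L_1, L_2 > 0$ to be chosen, I would introduce the auxiliary function
\begin{equation*}
\Psi(x,y) := u(x) - u(y) - L_1 \omega(|x-y|) - L_2 |x - x_0|^2 - L_2 |y - x_0|^2,
\end{equation*}
where $\omega(t) = t^\beta$ for $t$ small (so $\omega$ is concave, $\omega' \to +\infty$ at $0$, $\omega'' < 0$). One assumes for contradiction that $u(x) - u(y) > L_1 |x-y|^\beta$ at some pair of points inside $B_{3/4}$; this forces $\Psi$ to have a strictly positive maximum at some interior point $(\bar x, \bar y)$ with $\bar x \neq \bar y$, and the penalization term $L_2 |x-x_0|^2 + L_2|y-x_0|^2$ keeps $\bar x, \bar y$ away from the boundary once $L_2$ is fixed large in terms of $\|u\|_{L^\infty}$. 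The standard localization estimates give $|\bar x - \bar y| \lesssim (L_1/L_2)$-type bounds and, crucially, $L_1 \omega(|\bar x - \bar y|) \lesssim \mathrm{osc}\, u$, so $L_1 |\bar x - \bar y|^\beta$ is controlled.

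The core of the argument is the Theorem on Sums (Crandall–Ishii–Lions, \cite{CIL92}): at $(\bar x, \bar y)$ there exist symmetric matrices $X, Y \in \mathcal{S}^n$ with $(\eta_x, X) \in \overline{J}^{2,+}u(\bar x)$, $(\eta_y, Y) \in \overline{J}^{2,-}u(\bar y)$, where $\eta_x = \eta_y + 2L_2(\bar x - x_0) =: \eta + O(L_2 R)$ and $\eta := L_1 \omega'(|\bar x - \bar y|)\frac{\bar x - \bar y}{|\bar x - \bar y|}$ is the ``large'' gradient of order $L_1 |\bar x - \bar y|^{\beta - 1}$, and such that, with $B := D^2(\omega(|x-y|))$ evaluated at $(\bar x - \bar y)$,
\begin{equation*}
\begin{pmatrix} X & 0 \\ 0 & -Y \end{pmatrix} \le L_1 \begin{pmatrix} B & -B \\ -B & B \end{pmatrix} + C L_1 \begin{pmatrix} B^2 & -B^2 \\ -B^2 & B^2 \end{pmatrix} + 2L_2 I.
\end{equation*}
Testing this against vectors of the form $(z,z)$ shows $X - Y \le 4L_2 I$, while testing against $(\hat e, -\hat e)$ with $\hat e = \frac{\bar x - \bar y}{|\bar x - \bar y|}$ exploits the large negative eigenvalue of $B$ in the radial direction, yielding $\langle (X-Y)\hat e, \hat e\rangle \le -c\, L_1 |\omega''(|\bar x - \bar y|)| + C L_2 \lesssim -c\, L_1 |\bar x - \bar y|^{\beta - 2}$. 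Plugging $(\eta_x, X)$ and $(\eta_y, Y)$ into the two viscosity inequalities and subtracting, the zeroth-order terms cancel favourably by monotonicity of $f$ in $\tau$ (since $u(\bar x) > u(\bar y)$), and the difference of the operators $F(\bar x, \eta_x, X) - F(\bar y, \eta_y, Y)$ must be estimated from below. The leading contribution comes from the ellipticity: roughly $|\eta|^{p-2}\langle(Y-X)\hat e,\hat e\rangle + a\,|\eta|^{q-2}\langle(Y-X)\hat e,\hat e\rangle \gtrsim c\, L_1^{p-1} |\bar x-\bar y|^{(p-2)+(\beta-2)}$ (plus the $q$-analogue with the factor $a$), a large positive quantity. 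Against this one must bound: (i) the ``trace'' and off-radial parts, controlled by $|\eta|^{p-2} L_2 + a|\eta|^{q-2}L_2$; (ii) the $x$-dependence of the coefficient, $|F_2(\bar x,\eta_x,X) - F_2(\bar y,\eta_y,Y)|$ and $|F_3|$-differences, which via $a \in C^1$ are of order $\|a\|_{C^1}|\bar x - \bar y|\,(\|X\| + \|Y\|)|\eta|^{q-2} + |\eta|^{q-1}$; (iii) the error from $\eta_x \neq \eta_y$, of order $L_2 R$ times lower-order factors; and (iv) the right-hand side $f$, bounded by \eqref{0-2} as $\gamma_\infty(|\eta|^{p-1} + a|\eta|^{q-1}) + \|\Phi\|_{L^\infty}$.

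The decisive step—and the main obstacle—is the bookkeeping of powers of $|\bar x - \bar y|$ and of the parameter $L_1$ so that the good ellipticity term dominates all error terms. The good term carries $|\bar x-\bar y|^{p+\beta-4}$ and $L_1^{p-1}$; the $f$-term on the right carries only $L_1^{p-1}|\bar x - \bar y|^{(p-1)(\beta-1)}$, which is lower order since $(p-1)(\beta-1) > p+\beta-4 \iff (p-2)(2-\beta)>0$ fails only at $p=2$—there one needs the strict concavity of $\omega$ and the $CL_1 B^2$ term to still win, or one simply notes the $q$-term helps. The coefficient-error terms (ii) are where the hypothesis $q \le p+1$ enters: the worst such term behaves like $\|a\|_{C^1}|\bar x - \bar y|^{1 + (q-2)(\beta-1) + (\beta-2)} L_1^{q-1}$, and comparing with the good term's $q$-part $\sim a\, L_1^{q-1}|\bar x-\bar y|^{q+\beta-4}$ requires $1 + (q-2)(\beta-1) + (\beta-2) \ge q + \beta - 4$, i.e. $(q-2)(\beta - 1) \ge q - 3 - \beta$; since $\beta$ can be taken close to $1$, this reduces to needing $q - p$ not too large, and $q \le p+1$ suffices to absorb it after also using the smallness of $|\bar x-\bar y|$ (which itself follows from $L_1$ large, by the localization estimate). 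I would first fix $\beta$, then choose $L_2$ large (depending on $\|u\|_{L^\infty}, R$), then choose $L_1$ sufficiently large depending on $n, p, q, \beta, \gamma_\infty, \|a\|_{C^1}, \|u\|_{L^\infty}, \|\Phi\|_{L^\infty}$ so that the chain of inequalities forces a contradiction; tracing the dependence of $L_1$ on $R$ and on $\mathrm{dist}(x_0,\partial B_{7/8})$ and taking a supremum over $x_0 \in B_{3/4}$ then yields the stated Hölder estimate with a uniform constant $C$. One must also handle the degenerate case where the maximum point has $\bar x = \bar y$ or $\eta = 0$: if $p \ge 2$ the viscosity inequalities hold without the restriction $\eta \neq 0$, and if $1 < p < 2$ one argues that $\omega'(|\bar x-\bar y|) \to \infty$ prevents $\eta = 0$ at a strictly positive maximum, so this case does not actually occur.
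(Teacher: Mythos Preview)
Your overall scheme---Ishii--Lions doubling of variables with $\phi(t)=t^\beta$, theorem of sums, and a contradiction from the negative radial eigenvalue of the Hessian---is exactly the paper's approach. But two points in your bookkeeping are genuine gaps.

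First, and most importantly, your identification of where $q\le p+1$ enters is wrong. You compare the coefficient-error term $\|a\|_{C^1}L_1^{q-1}|\bar x-\bar y|^{(\beta-1)(q-1)}$ (which carries \emph{no} factor of $a$, only $\|Da\|_\infty$) against the ``good $q$-part'' $a(\bar y)\,L_1^{q-1}|\bar x-\bar y|^{(\beta-1)(q-1)-1}$. Since the lemma only assumes $a\ge 0$, the point $\bar y$ may lie where $a(\bar y)=0$ (or is arbitrarily small), and then the good $q$-term cannot absorb anything. Your exponent comparison does not even contain $p$, so it cannot be the source of the gap condition $q\le p+1$. The correct move---and what the paper does---is to absorb this $Da$-error (and the analogous $q$-growth piece of $f$) into the \emph{good $p$-term} $L_1^{p-1}|\bar x-\bar y|^{(\beta-1)(p-1)-1}$, which is always present. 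That comparison reads
\[
L_1^{p-q}\,|\bar x-\bar y|^{(\beta-1)(p-q)-1}\ \ge\ C,
\]
and combining it with $|\bar x-\bar y|\le (2\|u\|_{L^\infty}/L_1)^{1/\beta}$ gives $L_1^{(p-q+1)/\beta}\ge C$, which forces $q<p+1$ for the argument to close by taking $L_1$ large. The $a(\bar y)$-weighted error terms are absorbed separately by the $a(\bar y)$-weighted good $q$-term; those comparisons impose no gap condition.

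Second, you invoke monotonicity of $f$ in $\tau$ to cancel the zeroth-order terms, but the lemma does not assume this (only the growth bound \eqref{0-2} is in force here). The paper simply estimates $|f(\bar x,u(\bar x),\eta_1)|+|f(\bar y,u(\bar y),\eta_2)|$ via \eqref{0-2}, producing another contribution of order $L_1^{q-1}|\bar x-\bar y|^{(\beta-1)(q-1)}$ that is again absorbed by the good $p$-term. Your plan should be amended accordingly.
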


\begin{proof}
Fix $x_0,y_0\in B_{3/4}$. We now aim at showing that there are two proper constants $L_1,L_2>0$ such that
\begin{equation}
\label{5-1}
\omega:=\sup_{x,y\in \overline{B_{3/4}}} \left(u(x)-u(y)-L_1\phi(|x-y|)-\frac{L_2}{2}|x-x_0|^2-\frac{L_2}{2}|y-y_0|^2\right)\le0,
\end{equation}
where $\phi(r)=r^\beta$ with $\beta\in(0,1)$.

To this end, assume on the contrary that \eqref{5-1} is not true and moreover $(\overline{x},\overline{y})\in\overline{B_{3/4}}\times\overline{B_{3/4}}$ stands for the point where the supremum is achieved. We can easily know two facts that $\overline{x}\neq\overline{y}$ by $\omega>0$, and $\overline{x},\overline{y}\in B_{3/4}$ by choosing
$$
L_2\ge \frac{64\|u\|_{L^\infty(B_1)}}{(\min\{\mathrm{dist}(x_0,\partial B_{3/4}),\mathrm{dist}(y_0,\partial B_{3/4})\})^2}.\
$$
Besides,
$$
|\overline{x}-\overline{y}|\leq\left(\frac{2\|u\|_{L^\infty(B_1)}}{L_1}\right)^\frac{1}{\beta}
$$
is small enough, provided that $L_1$ is sufficiently large, which shall be utilized later.

Now by invoking theorem of sums \cite[Theorem 3.2]{CIL92}, there are $X,Y\in \mathcal{S}^n$ such that
$$
(\eta_1,X+L_2I)\in \overline{J}^{2,+}u(\overline{x}) \quad \text{and}\quad (\eta_2,Y-L_2I)\in \overline{J}^{2,-}u(\overline{y})
$$
with
\begin{align*}
&\eta_1=L_1D_x\phi(|\overline{x}-\overline{y}|)+L_2(\overline{x}-x_0)=L_1\phi'(|\overline{x}-\overline{y}|)\frac{\overline{x}-\overline{y}}{|\overline{x}-\overline{y}|}+L_2(\overline{x}-x_0),\\
&\eta_2=-L_1D_y\phi(|\overline{x}-\overline{y}|)-L_2(\overline{y}-y_0)=L_1\phi'(|\overline{x}-\overline{y}|)\frac{\overline{x}-\overline{y}}{|\overline{x}-\overline{y}|}-L_2(\overline{y}-y_0).
\end{align*}
Via selecting $L_1\ge C(\beta)L_2$ large enough, there holds that
\begin{equation}
\label{5-2}
\frac{\beta L_1}{2}|\overline{x}-\overline{y}|^{\beta-1}\leq |\eta_1|,|\eta_2|\leq 2\beta L_1|\overline{x}-\overline{y}|^{\beta-1}.
\end{equation}
Furthermore, applying \cite[Theorem 12.2]{Cra97}, for any $\tau>0$ such that $\tau Z<I$, we obtain
\begin{equation}
\label{5-3}
-\frac{2}{\tau}\left(\begin{array}{cc}
I & \\
 &I
\end{array}
\right)\leq\left(\begin{array}{cc}
X&\\
&-Y
\end{array}
\right)\leq
\left(\begin{array}{cc}
Z^\tau&-Z^\tau\\
-Z^\tau&Z^\tau
\end{array}
\right),
\end{equation}
where
\begin{align*}
Z&=L_1\phi''(|\overline{x}-\overline{y}|)\frac{\overline{x}-\overline{y}}{|\overline{x}-\overline{y}|}
\otimes\frac{\overline{x}-\overline{y}}{|\overline{x}-\overline{y}|}+\frac{L_1\phi'(|\overline{x}-\overline{y}|)}{|\overline{x}-\overline{y}|}
\left(I-\frac{\overline{x}-\overline{y}}{|\overline{x}-\overline{y}|}
\otimes\frac{\overline{x}-\overline{y}}{|\overline{x}-\overline{y}|}\right)\\
&=\beta L_1|\overline{x}-\overline{y}|^{\beta-2}\left(I+(\beta-2)\frac{\overline{x}-\overline{y}}{|\overline{x}-\overline{y}|}
\otimes\frac{\overline{x}-\overline{y}}{|\overline{x}-\overline{y}|}\right)
\end{align*}
and
$$
Z^\tau=(I-\tau Z)^{-1}Z
$$
with $(I-\tau Z)^{-1}$ denoting the inverse of the matrix $I-\tau Z$. Now pick $\tau=\frac{1}{2\beta L_1|\overline{x}-\overline{y}|^{\beta-2}}$ such that
$$
Z^\tau=2\beta L_1|\overline{x}-\overline{y}|^{\beta-2}\left(I-2\frac{2-\beta}{3-\beta}\frac{\overline{x}-\overline{y}}{|\overline{x}-\overline{y}|}
\otimes\frac{\overline{x}-\overline{y}}{|\overline{x}-\overline{y}|}\right).
$$
Observe that
\begin{equation}
\label{5-4}
\langle Z^\tau\xi,\xi\rangle=2\beta\frac{\beta-1}{3-\beta}L_1|\overline{x}-\overline{y}|^{\beta-2}<0
\end{equation}
for $\xi=\frac{\overline{x}-\overline{y}}{|\overline{x}-\overline{y}|}$. In addition, it follows from \eqref{5-3} that $X\le Y$ and
\begin{equation}
\label{5-5}
\|X\|,\|Y\|\le 4\beta L_1|\overline{x}-\overline{y}|^{\beta-2}.
\end{equation}

Let
$$
A_s(\eta):=I+(s-2)\frac{\eta}{|\eta|}\otimes\frac{\eta}{|\eta|}  \quad\text{for } \eta\in\mathbb{R}^n\setminus\{0\}
$$
with $s\in\{p,q\}$. An obvious fact is that the eigenvalues of $A_s(\eta)$ belong to the interval $[\min\{1,s-1\},\max\{1,s-1\}]$. Now since $u$ is a viscosity solution to \eqref{main}, we have
$$
F(\overline{x},\eta_1,X+L_2I)-f(\overline{x},u(\overline{x}),\eta_1)\le0
$$
and
$$
F(\overline{y},\eta_2,Y-L_2I)-f(\overline{y},u(\overline{y}),\eta_2)\ge0.
$$
Adding these two inequalities becomes
\begin{align}
\label{5-6}
0&\le |\eta_1|^{p-2}\mathrm{tr}(A_p(\eta_1)(X+L_2I))-|\eta_2|^{p-2}\mathrm{tr}(A_p(\eta_2)(Y-L_2I)) \nonumber\\
&\quad+a(\overline{x})|\eta_1|^{q-2}\mathrm{tr}(A_q(\eta_1)(X+L_2I))-a(\overline{y})|\eta_2|^{q-2}\mathrm{tr}(A_q(\eta_2)(Y-L_2I)) \nonumber\\
&\quad+|\eta_1|^{q-2}\eta_1\cdot Da(\overline{x})-|\eta_2|^{q-2}\eta_2\cdot Da(\overline{y})+f(\overline{x},u(\overline{x}),\eta_1)-f(\overline{y},u(\overline{y}),\eta_2)  \nonumber\\
&=:I_1+I_2+I_3+I_4.
\end{align}
In what follows, our goal is to justify $I_1+I_2+I_3+I_4<0$ under suitable conditions, which reaches a contradiction so that the claim \eqref{5-1} is precisely true. First, we examine the term $I_2$ as
\begin{align*}
I_2&=(a(\overline{x})-a(\overline{y}))|\eta_1|^{q-2}\mathrm{tr}(A_q(\eta_1)(X+L_2I))\\
&\quad+a(\overline{y})(|\eta_1|^{q-2}-|\eta_2|^{q-2})\mathrm{tr}(A_q(\eta_1)(X+L_2I))\\
&\quad+a(\overline{y})|\eta_2|^{q-2}[\mathrm{tr}(A_q(\eta_1)(X+L_2I))-\mathrm{tr}(A_q(\eta_2)(Y-L_2I))]\\
&=(a(\overline{x})-a(\overline{y}))|\eta_1|^{q-2}\mathrm{tr}(A_q(\eta_1)(X+L_2I))+a(\overline{y})(|\eta_1|^{q-2}-|\eta_2|^{q-2})\mathrm{tr}(A_q(\eta_1)(X+L_2I))\\
&\quad+a(\overline{y})|\eta_2|^{q-2}\mathrm{tr}(A_q(\eta_1)(X-Y))+a(\overline{y})|\eta_2|^{q-2}\mathrm{tr}((A_q(\eta_1)-A_q(\eta_2))Y)\\
&\quad+L_2a(\overline{y})|\eta_2|^{q-2}(\mathrm{tr}(A_q(\eta_1)+\mathrm{tr}(A_q(\eta_2)))\\
&=:I_{21}+I_{22}+I_{23}+I_{24}+I_{25}.
\end{align*}
Let us mention that the treatment of $I_2$ is similar to that of $J_1$ in \cite[Lemma 6.1]{FZ21}. However, we here give some details for the sake of readability. For $I_{21}$, by virtue of \eqref{5-2}, \eqref{5-5} and the eigenvalues of $A_q(\eta_1)$, we have
\begin{align*}
I_{21}&\le C(\beta,q,\|a\|_{C^1(B_1)})|\overline{x}-\overline{y}| L_1^{q-2}|\overline{x}-\overline{y}|^{(\beta-1)(q-2)}(n\|A_q(\eta_1)\|\|X\|+nL_2\|A_q(\eta_1)\|)\\
&\le C(n,\beta,q,\|a\|_{C^1(B_1)})L_1^{q-2}|\overline{x}-\overline{y}|^{(\beta-1)(q-2)+1}(L_1|\overline{x}-\overline{y}|^{\beta-2}+L_2).
\end{align*}
We now consider $I_{22}$,
\begin{align*}
I_{22}&\le C(q,\beta)a(\overline{y})L_1^{q-3}|\overline{x}-\overline{y}|^{(\beta-1)(q-3)}L_2(n\|A_q(\eta_1)\|\|X\|+nL_2\|A_q(\eta_1)\|)\\
&\le C(n,\beta,q)a(\overline{y})L_2L_1^{q-3}|\overline{x}-\overline{y}|^{(\beta-1)(q-3)}(L_1|\overline{x}-\overline{y}|^{\beta-2}+L_2).
\end{align*}
Indeed, we evaluate $|\eta_1|^{q-2}-|\eta_2|^{q-2}$ as
\begin{align*}
|\eta_1|^{q-2}-|\eta_2|^{q-2}&=(q-2)|\xi_{12}|^{q-3}(|\eta_1|-|\eta_2|)\\
&\le |q-2|C(q,\beta)L_1^{q-3}|\overline{x}-\overline{y}|^{(\beta-1)(q-3)}|\eta_1-\eta_2|\\
&\le C(q,\beta)L_2L^{q-3}_1|\overline{x}-\overline{y}|^{(\beta-1)(q-3)},
\end{align*}
where $|\xi_{12}|$ is between $|\eta_1|$ and $|\eta_2|$, and we have used \eqref{5-2} and $|\eta_1-\eta_2|\le 4L_2$. Due to \eqref{5-3} and \eqref{5-4}, any eigenvalue of $X-Y$ is nonnegative and at least one eigenvalue, denoted by $\overline{\lambda}(X-Y)$, is less than or equal to $8\beta\frac{\beta-1}{3-\beta}L_1|\overline{x}-\overline{y}|^{\beta-2}$. Next, we deal with the term $I_{23}$ as
\begin{align*}
I_{23}&\leq a(\overline{y})|\eta_2|^{q-2}\sum^n_{i=1}\lambda_i(A_q(\eta_1))\lambda_i(X-Y)\\
&\leq a(\overline{y})|\eta_2|^{q-2}\min\{1,q-1\}\overline{\lambda}(X-Y)\\
&\leq a(\overline{y})C(q,\beta)L_1^{q-2}|\overline{x}-\overline{y}|^{(\beta-1)(q-2)}\left(-8\beta\frac{1-\beta}{3-\beta}\right)L_1|\overline{x}-\overline{y}|^{\beta-2}\\
&\le -C(q,\beta)a(\overline{y})L^{q-1}_1|\overline{x}-\overline{y}|^{(\beta-1)(q-1)-1}
\end{align*}
with $\lambda_1(A_q(\eta_1))\le \lambda_2(A_q(\eta_1))\le \cdots\le \lambda_n(A_q(\eta_1))$ and $\lambda_1(X-Y)\le\lambda_2(X-Y)\le\cdots\le \lambda_n(X-Y)$. Here we need to note $0<\beta<1$. In order to evaluate $I_{24}$, we first notice
\begin{align*}
\|A_q(\eta_1)-A_q(\eta_2)\|
&\le 2|q-2|\left|\frac{\eta_1}{|\eta_1|}-\frac{\eta_2}{|\eta_2|}\right|\\
&\leq2|q-2|\max\left\{\frac{|\eta_1-\eta_2|}{|\eta_1|},\frac{|\eta_1-\eta_2|}{|\eta_2|}\right\}\\
&\leq C(q,\beta)\frac{L_2}{L_1|\overline{x}-\overline{y}|^{\beta-1}}.
\end{align*}
Thereby, from \eqref{5-2}, \eqref{5-5} and the preceding inequality,
\begin{align*}
I_{24}&\leq a(\overline{y})|\eta_2|^{q-2}n\|A_q(\eta_1)-A_q(\eta_2)\|\|Y\|\\
&\leq C(n,q,\beta)a(\overline{y})(L_1|\overline{x}-\overline{y}|^{\beta-1})^{q-2}\frac{L_2}{L_1|\overline{x}-\overline{y}|^{\beta-1}} L_1|\overline{x}-\overline{y}|^{\beta-2}\\
&=C(n,q,\beta)a(\overline{y})L_2L_1^{q-2}|\overline{x}-\overline{y}|^{(\beta-1)(q-2)-1}.
\end{align*}
The term $I_{25}$ finally can be treated by
\begin{align*}
I_{25}\le a(\overline{y})|\eta_2|^{q-2}L_22n\max\{1,q-1\}\le C(n,q,\beta)a(\overline{y})L_2L_1^{q-2}|\overline{x}-\overline{y}|^{(\beta-1)(q-2)}.
\end{align*}
Now merging these estimates of $I_{21}$--$I_{25}$ yields that
\begin{align*}
I_{2}\le& -C(q,\beta)a(\overline{y})L_1^{q-1}|\overline{x}-\overline{y}|^{(\beta-1)(q-1)-1}+C(n,q,\beta)a(\overline{y})L_2L_1^{q-2}|\overline{x}-\overline{y}|^{(\beta-1)(q-2)-1}\\
&+C(n,q,\beta)a(\overline{y})L_2^2L_1^{q-3}|\overline{x}-\overline{y}|^{(\beta-1)(q-3)}+C(n,q,\beta,\|a\|_{C^1(B_1)})L_1^{q-1}|\overline{x}-\overline{y}|^{(\beta-1)(q-1)}\\
&+C(n,q,\beta,\|a\|_{C^1(B_1)})L_2L_1^{q-2}|\overline{x}-\overline{y}|^{(\beta-1)(q-2)+1},
\end{align*}
where we have employed the fact that $|\overline{x}-\overline{y}|<1$. Analogously, we could derive
\begin{align*}
I_{1}\le& -C(p,\beta)L_1^{p-1}|\overline{x}-\overline{y}|^{(\beta-1)(p-1)-1}+C(n,p,\beta)L_2L_1^{p-2}|\overline{x}-\overline{y}|^{(\beta-1)(p-2)-1}\\
&+C(n,p,\beta)L_2^2L_1^{q-3}|\overline{x}-\overline{y}|^{(\beta-1)(p-3)},
\end{align*}
where we just note $a(\cdot)\equiv1$. The term $I_3$ is directly estimated as
$$
I_3\le |\eta_1|^{q-1}|Da(\overline{x})|+|\eta_2|^{q-1}|Da(\overline{y})|\le C(q,\beta,\|a\|_{C^1(B_1)})L^{q-1}_1|\overline{x}-\overline{y}|^{(\beta-1)(q-1)}
$$
by applying \eqref{5-2}. As for $I_4$, according to the growth condition on $f$,
\begin{align*}
I_4&\le |f(\overline{x},u(\overline{x}),\eta_1)|+|f(\overline{y},u(\overline{y}),\eta_2)|\\
&\le \gamma(|u(\overline{x})|)(|\eta_1|^{p-1}+a(\overline{x})|\eta_1|^{q-1})+\Phi(\overline{x})+\gamma(|u(\overline{y})|)(|\eta_2|^{p-1}+a(\overline{y})|\eta_2|^{q-1})+\Phi(\overline{y})\\
&\le \gamma_\infty C(p,q,\beta)(L_1^{p-1}|\overline{x}-\overline{y}|^{(\beta-1)(p-1)}+\|a\|_{L^\infty(B_1)}L_1^{q-1}|\overline{x}-\overline{y}|^{(\beta-1)(q-1)})+\|\Phi\|_{L^\infty(B_1)}
\end{align*}
with $\gamma_\infty:=\max_{t\in[0,\|u\|_{L^\infty(B_1)}]} \gamma(t)$.

We eventually gather the estimates on $I_1$--$I_4$ with \eqref{5-6} to infer that
\begin{align*}
0&\leq\big[-C(p,\beta)L^{p-1}_1|\overline{x}-\overline{y}|^{(\beta-1)(p-1)-1}+C(n,p,\beta)L_2L_1^{p-2}|\overline{x}-\overline{y}|^{(\beta-1)(p-2)-1}\\
&\quad+C(n,p,\beta)L_2^2L_1^{p-3}|\overline{x}-\overline{y}|^{(\beta-1)(p-3)}+C(p,q,\beta,\gamma_\infty)L_1^{p-1}|\overline{x}-\overline{y}|^{(\beta-1)(p-1)}\\
&\quad+C(n,p,q,\beta,\|a\|_{C^1(B_1)},\gamma_\infty)L_1^{q-1}|\overline{x}-\overline{y}|^{(\beta-1)(q-1)}+\|\Phi\|_{L^\infty(B_1)}\\
&\quad+C(n,q,\beta,\|a\|_{C^1(B_1)})L_2L_1^{q-2}|\overline{x}-\overline{y}|^{(\beta-1)(q-2)+1}\big]\\
&\quad+a(\overline{y})\big[-C(q,\beta)L^{q-1}_1|\overline{x}-\overline{y}|^{(\beta-1)(q-1)-1}
+C(n,q,\beta)L_2L_1^{q-2}|\overline{x}-\overline{y}|^{(\beta-1)(q-2)-1}\\
&\quad+C(n,q,\beta)L^2_2L_1^{q-3}|\overline{x}-\overline{y}|^{(\beta-1)(q-3)}\big].
\end{align*}
Now our aim is to make the term at the right-hand side of the above display to be strictly less than 0, through choosing $L_1$ large enough. We first select $L_1$ so large that
\begin{equation*}
\begin{cases}
\frac{1}{3}C(q,\beta)L_1^{q-1}|\overline{x}-\overline{y}|^{(\beta-1)(q-1)-1}\geq C(n,q,\beta)L_2L^{q-2}_1|\overline{x}-\overline{y}|^{(\beta-1)(q-2)-1},\\[2mm]
\frac{1}{3}C(q,\beta)L_1^{q-1}|\overline{x}-\overline{y}|^{(\beta-1)(q-1)-1}\geq C(n,q,\beta)L_2^2L^{q-3}_1|\overline{x}-\overline{y}|^{(\beta-1)(q-3)},
\end{cases}
\end{equation*}
i.e.,
\begin{equation*}
L_1|\overline{x}-\overline{y}|^{\beta-1}\geq C(n,q,\beta,L_2).
\end{equation*}
This can be realized if $L_1$ is sufficiently large, since $|\overline{x}-\overline{y}|<1$ and the power of it is negative. Next, we proceed to choose $L_1$ so large that
\begin{equation*}
\begin{cases}
\frac{1}{7}C(p,\beta)L_1^{p-1}|\overline{x}-\overline{y}|^{(\beta-1)(p-1)-1}\geq C(n,p,\beta)L_2L^{p-2}_1|\overline{x}-\overline{y}|^{(\beta-1)(p-2)-1},\\[2mm]
\frac{1}{7}C(p,\beta)L_1^{p-1}|\overline{x}-\overline{y}|^{(\beta-1)(p-1)-1}\geq C(n,p,\beta)L_2^2L^{p-3}_1|\overline{x}-\overline{y}|^{(\beta-1)(p-3)},\\[2mm]
\frac{1}{7}C(p,\beta)L_1^{p-1}|\overline{x}-\overline{y}|^{(\beta-1)(p-1)-1}\geq C(p,q,\beta,\gamma_\infty)L_1^{p-1}|\overline{x}-\overline{y}|^{(\beta-1)(p-1)},\\[2mm]
\frac{1}{7}C(p,\beta)L_1^{p-1}|\overline{x}-\overline{y}|^{(\beta-1)(p-1)-1}\geq C(n,p,q,\beta,\|a\|_{C^1(B_1)},\gamma_\infty)L_1^{q-1}|\overline{x}-\overline{y}|^{(\beta-1)(q-1)},\\[2mm]
\frac{1}{7}C(p,\beta)L_1^{p-1}|\overline{x}-\overline{y}|^{(\beta-1)(p-1)-1}\geq C(n,q,\beta,\|a\|_{C^1(B_1)})L_2L_1^{q-2}|\overline{x}-\overline{y}|^{(\beta-1)(q-2)+1},\\[2mm]
\frac{1}{7}C(p,\beta)L_1^{p-1}|\overline{x}-\overline{y}|^{(\beta-1)(p-1)-1}\geq \|\Phi\|_{L^\infty(B_1)}.
\end{cases}
\end{equation*}
We arrange the previous display as
\begin{equation}
\label{5-7}
\begin{cases}
L_1|\overline{x}-\overline{y}|^{\beta-1}\geq C(n,p,\beta,L_2,\|\Phi\|_{L^\infty(B_1)}),\\[2mm]
|\overline{x}-\overline{y}|^{-1}\geq C(p,q,\beta,\gamma_\infty),\\[2mm]
L_1^{p-q}|\overline{x}-\overline{y}|^{(\beta-1)(p-q)-1}\geq C(n,p,q,\beta,\|a\|_{C^1(B_1)},\gamma_\infty),\\[2mm]
L_1^{p-q+1}|\overline{x}-\overline{y}|^{(\beta-1)(p-q+1)-2}\geq C(n,p,q,\beta,\|a\|_{C^1(B_1)},L_2).
\end{cases}
\end{equation}
Making use of the fact $|\overline{x}-\overline{y}|\le \left(\frac{2\|u\|_{L^\infty(B_1)}}{L_1}\right)^\frac{1}{\beta}$, we can pick such large $L_1>1$ that the first two inequalities of \eqref{5-7} hold true. To assure the inequality $\eqref{5-7}_3$ holds, we first require
$$
 (\beta-1)(p-q)-1<0 \ \Rightarrow \ q<p+\frac{1}{1-\beta}\quad (\beta\in(0,1)).
 $$
We in turn enforce
\begin{align*}
L_1^{p-q}|\overline{x}-\overline{y}|^{(\beta-1)(p-q)-1}&\ge L_1^{p-q}\left(\frac{2\|u\|_{L^\infty(B_1)}}{L_1}\right)^\frac{(\beta-1)(p-q)-1}{\beta}\\
&\geq C(n,p,q,\beta,\|a\|_{C^1(B_1)},\gamma_\infty),
\end{align*}
that is,
$$
L_1^\frac{p-q+1}{\beta}\ge C(n,p,q,\beta,\|a\|_{C^1(B_1)},\gamma_\infty,\|u\|_{L^\infty(B_1)})
$$
which is true precisely if
$$
p-q+1>0\ \Rightarrow \ q<p+1.
$$
Under this condition, the inequality $\eqref{5-7}_4$ shall hold true by choosing $L_1$ large.

As has been shown above, if $q<p+1$, we can select $L_1$ large enough, which depends on $n,p,q,\beta,\|a\|_{C^1(B_1)},\gamma_\infty$, $\|u\|_{L^\infty(B_1)}$ and $\|\Phi\|_{L^\infty(B_1)}$, to get
$$
0\le -\frac{1}{7}C(p,\beta)L_1^{p-1}|\overline{x}-\overline{y}|^{(\beta-1)(p-1)-1}-\frac{1}{3}C(q,\beta)a(\overline{y})L_1^{q-1}|\overline{x}-\overline{y}|^{(\beta-1)(q-1)-1}<0.
$$
This is a contradiction. Thus the claim \eqref{5-1} holds true, which means that the viscosity solution $u$ is locally $\beta$-H\"{o}lder continuous. The proof is finished now.
\end{proof}

Based on the local $\beta$-H\"{o}lder continuity of $u$ in Lemma \ref{lem5-1}, we could further deduce $u$ is locally Lipschitz continuous.

\begin{lemma}[Local Lipschitz continuity]
\label{lem5-2}
Let $u$ be a bounded viscosity solution to \eqref{main} in $B_1$. Under the assumptions that $0\le a(x)\in C^1(B_1)$, $p\le q\le p+\frac{1}{2}$ and \eqref{0-2}, there is a constant $C$ that depends on $n,p,q,\gamma_\infty,\|a\|_{C^1(B_1)}$, $\|u\|_{L^\infty(B_1)}$ and $\|\Phi\|_{L^\infty(B_1)}$, such that
$$
|u(x)-u(y)|\le C|x-y|
$$
for all $x,y\in B_{1/2}$. Here $\gamma_\infty:=\max_{t\in[0,\|u\|_{L^\infty(B_1)}]} \gamma(t)$.
\end{lemma}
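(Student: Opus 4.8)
The plan is to run the Ishii--Lions scheme of Lemma \ref{lem5-1} once more, this time with a concave auxiliary function that is linear to first order at the origin, and to feed in the a priori H\"older continuity already established in Lemma \ref{lem5-1}. Throughout, $C$ is a generic constant, $c>0$ a small one, $L_1\gg L_2\gg1$ large parameters to be fixed, $\omega_0\in(0,1)$ a small constant, and $\phi$ a concave nondecreasing function with $\phi(0)=0$ and $\phi(r)=r-\omega_0r^{3/2}$ for small $r$, so that $\phi(r)\le r$, $\phi$ is increasing near $0$, and $\phi''(r)=-\tfrac34\omega_0r^{-1/2}<0$.

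First I would fix $x_0,y_0\in B_{1/2}$ and set
\[
\omega:=\sup_{x,y\in\overline{B_{3/4}}}\Big(u(x)-u(y)-L_1\phi(|x-y|)-\tfrac{L_2}{2}|x-x_0|^2-\tfrac{L_2}{2}|y-y_0|^2\Big),
\]
and argue by contradiction that $\omega>0$. Exactly as in Lemma \ref{lem5-1}, a large choice of $L_2$ forces the maximiser $(\bar x,\bar y)$ into the interior of $\overline{B_{3/4}}$ with $\bar x\neq\bar y$, and $\phi(r)\ge r/2$ near $0$ gives $r:=|\bar x-\bar y|\le 4\|u\|_{L^\infty(B_1)}L_1^{-1}$. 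The two features distinguishing the Lipschitz case are: since $\phi'(r)\approx1$, the semijet gradients
\[
\eta_1=L_1\phi'(r)\tfrac{\bar x-\bar y}{r}+L_2(\bar x-x_0),\qquad \eta_2=L_1\phi'(r)\tfrac{\bar x-\bar y}{r}-L_2(\bar y-y_0)
\]
satisfy $|\eta_1|,|\eta_2|\sim L_1$ (a boundedness absent in Lemma \ref{lem5-1}); and, invoking Lemma \ref{lem5-1} with some fixed $\beta_0\in(0,1)$, the penalisation bound $\tfrac{L_2}{2}(|\bar x-x_0|^2+|\bar y-y_0|^2)<u(\bar x)-u(\bar y)\le Cr^{\beta_0}$ yields $|\eta_1-\eta_2|\le CL_2^{1/2}r^{\beta_0/2}$.

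Then the theorem of sums gives $X,Y\in\mathcal{S}^n$ with $(\eta_1,X+L_2I)\in\overline{J}^{2,+}u(\bar x)$, $(\eta_2,Y-L_2I)\in\overline{J}^{2,-}u(\bar y)$, $X\le Y$, $\|X\|,\|Y\|\le CL_1r^{-1}$, and, using $\phi''<0$ with a suitable $\tau\sim rL_1^{-1}$ in $Z^\tau=(I-\tau Z)^{-1}Z$, a most negative eigenvalue $\overline{\lambda}(X-Y)\le -c\,\omega_0L_1r^{-1/2}$ along $\tfrac{\bar x-\bar y}{r}$. Adding the two viscosity inequalities and splitting the result exactly as in Lemma \ref{lem5-1} into the $p$-part $I_1$, the $q$-part $I_2=I_{21}+\cdots+I_{25}$ (retaining the $L_2I$ terms), the $Da$-part $I_3$ and the lower-order part $I_4$, the leading pieces of $I_1$ and $I_{23}$ produce the good negative quantities $-c\,\omega_0L_1^{p-1}r^{-1/2}$ and $-c\,a(\bar y)\omega_0L_1^{q-1}r^{-1/2}$ (via $A_s(\eta)\ge\min\{1,s-1\}I$ and $X-Y\le0$). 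Each remaining term then carries a compensating factor: an $r$ from $|a(\bar x)-a(\bar y)|$ in $I_{21}$; a $|\eta_1-\eta_2|\le CL_2^{1/2}r^{\beta_0/2}$ in $I_{22},I_{24}$ (through $\|A_q(\eta_1)-A_q(\eta_2)\|\le C|\eta_1-\eta_2|L_1^{-1}$); an explicit $r$ in $I_{25}$; while $I_3$ and the $q$-growth part of $I_4$ are $O(L_1^{q-1})$ and the rest of $I_4$ is $O(\gamma_\infty L_1^{p-1}+\|\Phi\|_{L^\infty(B_1)})$.

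The hard part is that $I_{21}$ and $I_3$ are genuinely of size $O(L_1^{q-1})$: the small prefactors $|a(\bar x)-a(\bar y)|\le\|a\|_{C^1(B_1)}r$ and $|Da|$ only serve to cancel the Hessian blow-up $\|X\|\le CL_1r^{-1}$, and since $a(\bar y)$ may be arbitrarily small the good $q$-term cannot be used to control them; only the good $p$-term can. Inserting $r\le 4\|u\|_{L^\infty(B_1)}L_1^{-1}$ gives
\[
\omega_0L_1^{p-1}r^{-1/2}\ge c\,\omega_0L_1^{p-1/2},
\]
which dominates $O(L_1^{q-1})$ precisely when $q-1\le p-\tfrac{1}{2}$, i.e.\ $q\le p+\tfrac{1}{2}$; this is exactly the mechanism that shrinks the tolerance $q\le p+1$ of Lemma \ref{lem5-1} to $q\le p+\tfrac{1}{2}$, and it is traceable to the exponent $\tfrac{3}{2}$ in $\phi$ (so that $\phi''\sim r^{-1/2}$). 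Under $q\le p+\tfrac{1}{2}$ one then fixes $\omega_0$ small and $L_1$ large, with the asserted dependence on $n,p,q,\gamma_\infty,\|a\|_{C^1(B_1)},\|u\|_{L^\infty(B_1)},\|\Phi\|_{L^\infty(B_1)}$, to make $I_1+I_2+I_3+I_4<0$, contradicting the inequality $0\le I_1+I_2+I_3+I_4$ coming from the added viscosity inequalities. Hence $\omega\le0$, so $u(x_0)-u(y_0)\le L_1\phi(|x_0-y_0|)\le L_1|x_0-y_0|$; as $x_0,y_0\in B_{1/2}$ are arbitrary and $x,y$ enter symmetrically, $u$ is Lipschitz in $B_{1/2}$ with constant $L_1$.
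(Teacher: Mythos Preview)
Your plan mirrors the paper's proof almost exactly: the same doubling of variables with a concave $\phi$ that is linear to first order at the origin, the same use of the H\"older estimate from Lemma~\ref{lem5-1} to bound the penalisation terms (the paper writes $M_2|\hat x-x_0|\le c_\beta|\hat x-\hat y|^{\beta/2}$ where you write $|\eta_1-\eta_2|\le CL_2^{1/2}r^{\beta_0/2}$), the same decomposition into four blocks (the paper calls them $T_1$--$T_4$ and $T_2=T_{21}+\cdots+T_{25}$), and the same final competition between the good negative term $-cL_1^{p-1}|\phi''(r)|$ coming from the $p$-part and the bad $O(L_1^{q-1})$ terms produced by $Da$ and $a(\bar x)-a(\bar y)$.

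There is, however, one small but genuine gap. Your fixed exponent $\tfrac32$ in $\phi(r)=r-\omega_0r^{3/2}$ yields $|\phi''(r)|\sim r^{-1/2}$, so the good term satisfies $c\omega_0 L_1^{p-1}r^{-1/2}\gtrsim c\omega_0 L_1^{p-1/2}$ after inserting $r\lesssim\|u\|_{L^\infty}L_1^{-1}$. This dominates the bad $CL_1^{q-1}$ only when $q<p+\tfrac12$ \emph{strictly}; at the endpoint $q=p+\tfrac12$ both sides scale like $L_1^{p-1/2}$, and you would need the fixed small constant $\omega_0$ to beat a data-dependent constant $C(\|a\|_{C^1},\gamma_\infty,\|\Phi\|_{L^\infty},\dots)$, which cannot be arranged (and making $\omega_0$ small only worsens this). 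The paper avoids the endpoint issue by taking $\varphi(t)=t-\kappa_0 t^\nu$ with the flexible choice $\nu=1+\tfrac{\beta}{2}$, $\beta\in(0,1)$ being the H\"older exponent of Lemma~\ref{lem5-1}; then $|\varphi''(r)|\sim r^{\beta/2-1}$, the good term is $\gtrsim L_1^{p-\beta/2}$, and domination requires only $q<p+1-\tfrac{\beta}{2}$, which for any fixed $\beta<1$ strictly contains the full closed range $q\le p+\tfrac12$. The fix in your argument is simply to replace $r^{3/2}$ by $r^{1+\beta_0/2}$ with $\beta_0<1$.

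One minor wording point: $\omega_0$ (the paper's $\kappa_0$) is fixed small at the outset so that $\phi$ remains increasing on $[0,2]$; it is not a parameter to be adjusted at the final step---only $L_1$ is driven large there.
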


\begin{proof}
Let $x_0,y_0\in B_{1/2}$. Construct an auxiliary function
$$
\Psi(x,y):=u(x)-u(y)-M_1\varphi(|x-y|)-\frac{M_2}{2}|x-x_0|^2-\frac{M_2}{2}|y-y_0|^2, \quad M_1,M_2>0,
$$
where
\begin{equation*}
\varphi(t):=\begin{cases}t-\kappa_0t^\nu,  &\textmd{if } {0\leq t\leq t_1:=\left(\frac{1}{4\nu\kappa_0}\right)^\frac{1}{\nu-1}},\\[2mm]
\varphi(t_1),  &\textmd{if } {t>t_1}
\end{cases}
\end{equation*}
with $1<\nu<2$ and $0<\kappa_0<1$ such that $2<t_1$. We are ready to verify $\Psi(x,y)\le 0$ for $(x,y)\in B_{3/4}\times B_{3/4}$ under the appropriate choice of $M_1,M_2$, which leads to Lipschitz continuity of $u$. We argue by contradiction. Suppose that $\Psi$ reaches its positive maximum at $(\hat{x},\hat{y})\in \overline{B}_{3/4}\times \overline{B}_{3/4}$. As in the beginning of the proof of Lemma \ref{lem5-1}, we can see that
$\hat{x}\neq\hat{y}$ and $\hat{x},\hat{y}\in B_{3/4}$ for large $M_2\geq C\|u\|_{L^\infty(B_1)}$. Moreover, by Lemma \ref{lem5-1}, we know that
$$
|u(x)-u(y)|\leq c_\beta|x-y|^\beta \quad \text{for } x,y\in B_{3/4},
$$
where $c_\beta$ is the same as the $C$ in Lemma \ref{lem5-1}. From the assumptions, we further get
\begin{equation}
\label{5-8}
M_2|\hat{x}-x_0|,M_2|\hat{y}-y_0|\le c_\beta|\hat{x}-\hat{y}|^\frac{\beta}{2}
\end{equation}
by adjusting the constants (by letting $2M_2\le c_\beta$). Additionally, it follows, by selecting $\kappa_0$ sufficiently small, that
$$
M_1(|\hat{x}-\hat{y}|-\kappa_0|\hat{x}-\hat{y}|^\nu)\le 2\|u\|_{L^\infty(B_1)},
$$
i.e.,
\begin{equation}
\label{5-8-1}
|\hat{x}-\hat{y}|\le \frac{4\|u\|_{L^\infty(B_1)}}{M_1}.
\end{equation}
Indeed, due to $|\hat{x}-\hat{y}|\le2$ and $\nu-1>0$, we can fix $\kappa_0\in(0,1)$ such that $\frac{1}{2}\le 1-\kappa_0|\hat{x}-\hat{y}|^{\nu-1}$.

From theorem of sums, for any $\mu>0$, there exist $X,Y\in\mathcal{S}^n$ such that
$$
(\eta_1,X+M_2I)\in \overline{J}^{2,+}u(\hat{x}) \quad \text{and}\quad (\eta_2,Y-M_2I)\in \overline{J}^{2,-}u(\hat{y})
$$
and
\begin{equation}
\label{5-9}
-(\mu+2\|B\|)\left(\begin{array}{cc}
I & \\
 &I
\end{array}
\right)\leq\left(\begin{array}{cc}
X&\\
&-Y
\end{array}
\right)\leq
\left(\begin{array}{cc}
B&-B\\
-B&B
\end{array}
\right)+\frac{2}{\mu}\left(\begin{array}{cc}
B^2&-B^2\\
-B^2&B^2
\end{array}
\right),
\end{equation}
where
\begin{align*}
&\eta_1=M_1\varphi'(|\hat{x}-\hat{y}|)\frac{\hat{x}-\hat{y}}{|\hat{x}-\hat{y}|}+M_2(\hat{x}-x_0),\\
&\eta_2=M_1\varphi'(|\hat{x}-\hat{y}|)\frac{\hat{x}-\hat{y}}{|\hat{x}-\hat{y}|}-M_2(\hat{y}-y_0)
\end{align*}
and
\begin{align*}
B&=M_1\varphi''(|\hat{x}-\hat{y}|)\frac{\hat{x}-\hat{y}}{|\hat{x}-\hat{y}|}
\otimes\frac{\hat{x}-\hat{y}}{|\hat{x}-\hat{y}|}+\frac{M_1\varphi'(|\hat{x}-\hat{y}|)}{|\hat{x}-\hat{y}|}
\left(I-\frac{\hat{x}-\hat{y}}{|\hat{x}-\hat{y}|}
\otimes\frac{\hat{x}-\hat{y}}{|\hat{x}-\hat{y}|}\right).
\end{align*}
Notice that, for $t\in[0,t_1]$,
\begin{equation}
\label{5-10}
\begin{cases}
\varphi'(t)=1-\nu\kappa_0t^{\nu-1}, \\[2mm]
\varphi''(t)=-\nu(\nu-1)\kappa_0t^{\nu-2},
\end{cases}
\end{equation}
and then $\frac{3}{4}\leq \varphi'(t)\leq1$ and $\varphi''(t)<0$ when $t\in(0,2]$. Through straightforward calculation we get
\begin{equation}
\label{5-11}
\frac{M_1}{2}\leq|\eta_1|,|\eta_2|\leq2M_1, \quad \text{if } M_1\geq4c_\beta,
\end{equation}
\begin{equation}
\label{5-12}
\|B\|\leq M_1\frac{\varphi'(|\hat{x}-\hat{y}|)}{|\hat{x}-\hat{y}|}
\end{equation}
and
\begin{equation}
\label{5-13}
\|B^2\|\leq M_1^2\left(|\varphi''(|\hat{x}-\hat{y}|)|+\frac{\varphi'(|\hat{x}-\hat{y}|)}{|\hat{x}-\hat{y}|}\right)^2.
\end{equation}
According to \eqref{5-9}, we obtain $X\le Y$ and furthermore $\|X\|,\|Y\|\le 2\|B\|+\mu$. Via taking
$$
\mu=4M_1\left(|\varphi''(|\hat{x}-\hat{y}|)|+\frac{\varphi'(|\hat{x}-\hat{y}|)}{|\hat{x}-\hat{y}|}\right),
$$
we have, for $\xi=\frac{\hat{x}-\hat{y}}{|\hat{x}-\hat{y}|}$,
\begin{equation}
\label{5-14}
\langle(X-Y)\xi,\xi\rangle\leq 4\left(\langle B\xi,\xi\rangle+\frac{2}{\mu}\langle B^2\xi,\xi\rangle\right)\leq 2M_1\varphi''(|\hat{x}-\hat{y}|)<0.
\end{equation}
It follows from the last inequality that at lowest one eigenvalue of $X-Y$, denoted by $\hat{\lambda}$, is smaller than $2M_1\varphi''(|\hat{x}-\hat{y}|)<0$. Besides, putting together \eqref{5-12}, \eqref{5-13} and \eqref{5-9}, we derive
\begin{align}
\label{5-15}
\|Y\| &\leq 2|\langle B\overline{\xi},\overline{\xi}\rangle|+\frac{4}{\mu}|\langle B^2\overline{\xi},\overline{\xi}\rangle|  \nonumber\\
&\leq4M_1\left(|\varphi''(|\hat{x}-\hat{y}|)|+\frac{\varphi'(|\hat{x}-\hat{y}|)}{|\hat{x}-\hat{y}|}\right),
\end{align}
where $\overline{\xi}$ is a unit vector.

Because $u$ is a viscosity solution, we arrive at
\begin{align}
\label{5-16}
0&\le |\eta_1|^{p-2}\mathrm{tr}(A_p(\eta_1)(X+M_2I))-|\eta_2|^{p-2}\mathrm{tr}(A_p(\eta_2)(Y-M_2I)) \nonumber\\
&\quad+a(\hat{x})|\eta_1|^{q-2}\mathrm{tr}(A_q(\eta_1)(X+M_2I))-a(\hat{y})|\eta_2|^{q-2}\mathrm{tr}(A_q(\eta_2)(Y-M_2I)) \nonumber\\
&\quad+|\eta_1|^{q-2}\eta_1\cdot Da(\hat{x})-|\eta_2|^{q-2}\eta_2\cdot Da(\hat{y})+f(\hat{x},u(\hat{x}),\eta_1)-f(\hat{y},u(\hat{y}),\eta_2)  \nonumber\\
&=:T_1+T_2+T_3+T_4.
\end{align}
In the following, we want to prove $T_1+T_2+T_3+T_4<0$ by taking $M_1$ large enough, the procedure of which is analogous to that in proof of Lemma \ref{lem5-1}. Here we shall briefly write down it. We first consider the term $T_2$,
\begin{align*}
T_2
&=(a(\hat{x})-a(\hat{y}))|\eta_2|^{q-2}\mathrm{tr}(A_q(\eta_2)(Y-M_2I))+a(\hat{x})|\eta_1|^{q-2}\mathrm{tr}(A_q(\eta_1)(X-Y))\\
&\quad+a(\hat{x})(|\eta_1|^{q-2}-|\eta_2|^{q-2})\mathrm{tr}(A_q(\eta_2)Y)+a(\hat{x})|\eta_1|^{q-2}\mathrm{tr}((A_q(\eta_1)-A_q(\eta_2))Y)\\
&\quad+M_2a(\hat{x})[|\eta_1|^{q-2}\mathrm{tr}(A_q(\eta_1))+|\eta_2|^{q-2}\mathrm{tr}(A_q(\eta_2))]\\
&=:T_{21}+T_{22}+T_{23}+T_{24}+T_{25}.
\end{align*}
In view of \eqref{5-10}, \eqref{5-11} and \eqref{5-15}, there holds that
\begin{align*}
T_{21}&\le C(q,\|a\|_{C^1(B_1)})M_1^{q-2}|\hat{x}-\hat{y}|(n\|A_q(\eta_2)\|\|Y\|+nqM_2)\\
&\le C(n,q,\|a\|_{C^1(B_1)})M_1^{q-2}[M_1(1+|\hat{x}-\hat{y}||\varphi''(|\hat{x}-\hat{y}|)|)+M_2]\\
&\le C(n,q,\|a\|_{C^1(B_1)})M_1^{q-2}[M_1(1+|\hat{x}-\hat{y}|^{\nu-1})+M_2]\\
&\le C(n,q,\|a\|_{C^1(B_1)})(M_1^{q-1}+M_1^{q-2}M_2).
\end{align*}
Applying the mean value theorem along with \eqref{5-8}, \eqref{5-10} and \eqref{5-11}, we obtain
$$
\big||\eta_1|^{q-2}-|\eta_2|^{q-2}\big|\le C(q,c_\beta)M_1^{q-3}|\hat{x}-\hat{y}|^\frac{\beta}{2},
$$
which indicates that
\begin{align*}
T_{23}&\le C(n,q,c_\beta)a(\hat{x})M_1^{q-3}|\hat{x}-\hat{y}|^\frac{\beta}{2}\|Y\|\\
&\le C(n,c_\beta,q)a(\hat{x})M_1^{q-2}(|\hat{x}-\hat{y}|^{\frac{\beta}{2}-1}+|\hat{x}-\hat{y}|^{\nu-2}).
\end{align*}
Thanks to \eqref{5-11} and \eqref{5-14}, we derive
\begin{align*}
T_{22}&\leq a(\hat{x})|\eta_1|^{q-2}\sum^n_{i=1}\lambda_i(A_q(\eta_1))\lambda_i(X-Y)\\
&\leq C(q)a(\hat{x})M_1^{q-1}\varphi''(|\hat{x}-\hat{y}|)\\
&= -C(q,\nu,\kappa_0)a(\hat{x})M_1^{q-1}|\hat{x}-\hat{y}|^{\nu-2}.
\end{align*}
According to \eqref{5-8}, \eqref{5-11} and \eqref{5-15},
\begin{align*}
T_{24}&\leq a(\hat{x})|\eta_1|^{q-2}n\|A_q(\eta_1)-A_q(\eta_2)\|\|Y\|\\
&\leq C(n,q)a(\hat{x})|\eta_1|^{q-2}\max\left\{\frac{|\eta_1-\eta_2|}{|\eta_1|},\frac{|\eta_1-\eta_2|}{|\eta_2|}\right\}\|Y\|\\
&\le C(n,q,c_\beta)a(\hat{x})M_1^{q-2}|\hat{x}-\hat{y}|^\frac{\beta}{2}\left(|\varphi''(|\hat{x}-\hat{y}|)|+\frac{\varphi'(|\hat{x}-\hat{y}|)}{|\hat{x}-\hat{y}|}\right)\\
&\le C(n,q,c_\beta)a(\hat{x})M_1^{q-2}(|\hat{x}-\hat{y}|^{\frac{\beta}{2}-1}+|\hat{x}-\hat{y}|^{\nu-2}).
\end{align*}
Finally, by \eqref{5-11} we have
\begin{align*}
T_{25}\le C(n,q)a(\hat{x})M_2M_1^{q-2}.
\end{align*}
Thus taking $\nu=\frac{\beta}{2}+1$ and combining these previous inequalities yields that
\begin{align*}
T_{2}\le& -C(q,\beta)a(\hat{x})M_1^{q-1}|\hat{x}-\hat{y}|^{\frac{\beta}{2}-1}+C(n,q,c_\beta)a(\hat{x})M_1^{q-2}|\hat{x}-\hat{y}|^{\frac{\beta}{2}-1}\\
&+C(n,q,\|a\|_{C^1(B_1)})(M_1^{q-1}+M_1^{q-2}M_2).
\end{align*}
Similarly, we have
$$
T_1\le -C(p,\beta)M_1^{p-1}|\hat{x}-\hat{y}|^{\frac{\beta}{2}-1}+C(n,p,c_\beta)M_1^{p-2}|\hat{x}-\hat{y}|^{\frac{\beta}{2}-1}
+C(n,p)M_1^{p-2}M_2.
$$
For $T_3$, it is easy to get
$$
T_3\le C(q,\|a\|_{C^1(B_1)})M_1^{q-1}.
$$
Owing to the growth condition on $f$, we can see that
\begin{align*}
T_4&\le \gamma_\infty(|\eta_1|^{p-1}+a(\hat{x})|\eta_1|^{q-1})+\gamma_\infty(|\eta_2|^{p-1}+a(\hat{y})|\eta_2|^{q-1})+2\|\Phi\|_{L^\infty(B_1)}\\
&\le C(p,q,\gamma_\infty,\|a\|_{L^\infty(B_1)})(M_1^{p-1}+M_1^{q-1})+2\|\Phi\|_{L^\infty(B_1)}\\
&\le C(p,q,\gamma_\infty,\|a\|_{L^\infty(B_1)},\|\Phi\|_{L^\infty(B_1)})M_1^{q-1}
\end{align*}
with $\gamma_\infty:=\max_{t\in[0,\|u\|_{L^\infty(B_1)}]} \gamma(t)$. Here we note $M_1>1$ is a sufficiently large number. It follows from merging the estimates on $T_1$--$T_4$ with \eqref{5-16} that
\begin{align}
\label{5-17}
0&\leq\big[-C(p,\beta)M^{p-1}_1|\hat{x}-\hat{y}|^{\frac{\beta}{2}-1}+C(n,p,c_\beta)M^{p-2}_1|\hat{x}-\hat{y}|^{\frac{\beta}{2}-1} \nonumber\\
&\quad+C(p,q,\gamma_\infty,\|a\|_{C^1(B_1)},\|\Phi\|_{L^\infty(B_1)})M_1^{q-1}\big] \nonumber\\
&\quad+a(\hat{x})\big[-C(q,\beta)M^{q-1}_1|\hat{x}-\hat{y}|^{\frac{\beta}{2}-1}+C(n,q,c_\beta)M_1^{q-2}|\hat{x}-\hat{y}|^{\frac{\beta}{2}-1}\big],
\end{align}
where we have used the relation $M_1\ge M_2$ to simplify the display. To get a contradiction, we have to select such large $M_1$ that
\begin{equation}
\label{5-18}
\begin{cases}
\frac{1}{3}C(p,\beta)M^{p-1}_1|\hat{x}-\hat{y}|^{\frac{\beta}{2}-1}\geq C(n,p,c_\beta)M_1^{p-2}|\hat{x}-\hat{y}|^{\frac{\beta}{2}-1},\\[2mm]
\frac{1}{3}C(p,\beta)M^{p-1}_1|\hat{x}-\hat{y}|^{\frac{\beta}{2}-1}\geq C(p,q,\gamma_\infty,\|a\|_{C^1(B_1)},\|\Phi\|_{L^\infty(B_1)})M_1^{q-1},\\[2mm]
\frac{1}{2}C(q,\beta)M^{q-1}_1|\hat{x}-\hat{y}|^{\frac{\beta}{2}-1}\geq C(n,q,c_\beta)M^{q-2}_1|\hat{x}-\hat{y}|^{\frac{\beta}{2}-1},
\end{cases}
\end{equation}
that is,
\begin{equation*}
\begin{cases}
M_1\geq C(n,p,q,c_\beta),\\[2mm]
M^{p-q}_1|\hat{x}-\hat{y}|^{\frac{\beta}{2}-1}\geq C(p,q,\beta,\gamma_\infty,\|a\|_{C^1(B_1)},\|\Phi\|_{L^\infty(B_1)}).
\end{cases}
\end{equation*}
Remembering \eqref{5-8-1}, we arrive at
$$
M^{p-q}_1|\hat{x}-\hat{y}|^{\frac{\beta}{2}-1}\geq (4\|u\|_{L^\infty(B_1)})^{\frac{\beta}{2}-1}M_1^{p-q+1-\frac{\beta}{2}}.
$$
Now enforcing
$$
q<p+1-\frac{\beta}{2},
$$
we could choose such large $M_1$ that
$$
M_1\ge C(n,p,q,\beta,\gamma_\infty,\|a\|_{C^1(B_1)},\|\Phi\|_{L^\infty(B_1)},\|u\|_{L^\infty(B_1)}),
$$
which ensures \eqref{5-18} holds true. Then the display \eqref{5-17} becomes
$$
0\leq -\frac{1}{3}CM^{p-1}_1|\hat{x}-\hat{y}|^{\frac{\beta}{2}-1}-\frac{1}{2}a(\hat{x})CM^{q-1}_1|\hat{x}-\hat{y}|^{\frac{\beta}{2}-1}<0,
$$
which is a contradiction. Let us mention that we can fix $\beta$ to a specific number so that $M_1$ does not depend on $\beta$. Up to now, we have justified the local Lipschitz continuity of $u$.
\end{proof}

Once Lemma  \ref{lem5-2} is proved in $B_1$, then the case of a  bounded domain $\Omega$ follows by the covering arguments.  Therefore, we finish the proof of Theorem \ref{thm1-1}.

\section*{Acknowledgments}

This work was supported by the National Natural Science Foundation of China (No. 12071098) and the National Postdoctoral Program for Innovative Talents of China (No. BX20220381). The second author was supported by a grant of the Romanian Ministry of Research, Innovation, and Digitization, CNCS/CCCDI-UEFISCDI (project no. PCE 137/2021), within PNCDI III.

\end{document}